\def\itm#1{\rm ({#1})} 
\def\itmit#1{\itm{\it #1\,}} 
\def\rom{\itmit{\roman{*}}}
\def\l{\ell}
\def\eps{\varepsilon}
\def\phi{\varphi}
\def\le{\leqslant}
\def\ge{\geqslant}
\def\CC{\mathbb{C}}
\def\PP{\mathbb{P}}
\def\RR{\mathbb{R}}
\def\SS{\mathbb{S}}
\def\ZZ{\mathbb{Z}}
\newcommand{\setbuilder}[2]{\left\{#1\;\middle|\;#2\right\}}
\newcommand{\tri}{\mathbin{\triangle}}
\newtheorem{theorem}{Theorem}[section]
\newtheorem{lemma}[theorem]{Lemma}
\newtheorem{corollary}[theorem]{Corollary}
\newtheorem{prop}[theorem]{Proposition}
\theoremstyle{definition}
\newtheorem{definition}[theorem]{Definition}
\theoremstyle{remark}
\title{On sets defining few ordinary circles\footnote{This is an update to the published version in Disc.\ Comp.\ Geom.\ \textbf{59} (2018), no.~1, 59--87. Minor errors in Theorems~\ref{thm:orchard} and \ref{thm:strong} have been corrected with corresponding updates to Sections~\ref{constr:ellipse} and \ref{constr:cubic}.}}
\author{Aaron Lin\footnote{Department of Mathematics, London School of Economics and Political Science, United Kingdom.} \and Mehdi Makhul\footnote{Research Institute for Symbolic Computation,
Johannes Kepler University,
Linz, Austria. 
Supported by the Austrian Science Fund
(FWF): DKW1214, subproject DK9.} \and Hossein Nassajian Mojarrad\footnote{Department of Mathematics, EPFL, Lausanne, Switzerland. Partially supported by Swiss National Science Foundation grants 200020-165977 and 200021-162884.} \and Josef Schicho\footnotemark[2] \and Konrad Swanepoel\footnotemark[1] \and Frank de Zeeuw\footnotemark[3]}
\date{}
\begin{document}

\maketitle

\begin{abstract}
An ordinary circle of a set $P$ of $n$ points in the plane is defined as a circle that contains exactly three points of $P$. 
We show that if $P$ is not contained in a line or a circle, then $P$ spans at least $\frac{1}{4}n^2 - O(n)$ ordinary circles. 
Moreover, we determine the exact minimum number of ordinary circles for all sufficiently large $n$ and describe all point sets that come close to this minimum.
We also consider the circle variant of the orchard problem. We prove that 
$P$ spans at most $\frac{1}{24}n^3 - O(n^2)$ circles passing through exactly four points of $P$.
Here we determine the exact maximum and the extremal configurations for all sufficiently large $n$. 

These results are based on the following structure theorem. 
If $n$ is sufficiently large depending on $K$,
and $P$ is a set of $n$ points spanning at most $Kn^2$ ordinary circles, 
then all but $O(K)$ points of $P$ lie on an algebraic curve of degree at most four.
Our proofs rely on a recent result of Green and Tao on ordinary lines, combined with circular inversion and some classical results regarding algebraic curves.
\end{abstract}

\section{Introduction}\label{sec:intro}

\subsection{Background}
The classical Sylvester-Gallai theorem states that any finite non-collinear point set in $\RR^2$ spans at least one \emph{ordinary line} (a line containing exactly two of the points).
A more sophisticated statement is the so-called Dirac-Motzkin conjecture, according to which every non-collinear set of $n>13$ points in $\RR^2$ determines at least $n/2$ ordinary lines. 
This conjecture was proved by Green and Tao \cite{GT13} for all sufficiently large $n$.
Their proof was based on a structure theorem, which roughly states that any point set with a linear number of ordinary lines must lie mostly on a cubic curve 
(see Theorem \ref{thm:GT} for a precise statement).

It is natural to ask the corresponding question for \emph{ordinary circles} (circles that contain exactly three of the given points);
see for instance \cite{BMP05}*{Section~7.2} or \cite{KW91}*{Chapter~6}.
Elliott \cite{E67} introduced this question in 1967, and proved that any $n$ points, not all on a line or a circle, determine at least $\frac{2}{63}n^2-O(n)$ ordinary circles. 
(Throughout the paper, by $O(f(n))$ we mean a function $g(n)$ such that $0\le g(n)\le Cf(n)$ for some constant $C>0$ and all sufficiently large $n$. Thus, $-O(n)$ is a function $g(n)$ satisfying $-Cn\le g(n)\le 0$ for sufficiently large $n$.)
He suggested, cautiously, that the optimal bound is $\frac{1}{6}n^2-O(n)$.
Elliott's result was improved by B\'alintov\'a and B\'alint \cite{BB94}*{Remark, p.~288} to 
$\frac{11}{247}n^2-O(n)$, and Zhang \cite{Z11} obtained $\frac{1}{18}n^2-O(n)$.
Zhang also gave constructions of point sets on two concentric circles with $\frac{1}{4}n^2 - O(n)$ ordinary circles.

We will use the results of Green and Tao to prove that $\frac{1}{4}n^2-O(n)$ is asymptotically the right answer, 
thus disproving the bound suggested by Elliott \cite{E67}.
Nassajian Mojarrad and De Zeeuw proved this bound in an earlier preprint \cite{MZ}, which is subsumed by this paper, and will not be published independently.
We will find the exact minimum number of ordinary circles,
for sufficiently large $n$, and we will determine which configurations attain or come close to that minimum.
We make no attempt to specify the threshold implicit in the phrase `for sufficiently large $n$'; any improvement would depend on an improvement of the threshold in the result of Green and Tao \cite{GT13}.
For small $n$, the bound $\frac{1}{9}\binom{n}{2}$ due to Zhang \cite{Z11} remains the best known lower bound on the number of ordinary circles.

Green and Tao \cite{GT13} also solved (for large $n$) the even older \emph{orchard problem},
which asks for the exact maximum number of lines passing through exactly three points of a set of $n$ points in the plane.
We refer to \cite{GT13} for the history of this problem.
The upper bound $\frac{1}{3}\binom{n}{2}$ is easily proved by double counting, but it is not the exact maximum.
Using group laws on certain cubic curves, one can construct $n$ non-collinear points with $\lfloor\frac{1}{6}n(n-3) + 1\rfloor $ $3$-point lines, 
and Green and Tao \cite{GT13} proved (for large $n$) that this is optimal.
This does not follow directly from the Dirac-Motzkin conjecture, but it does follow from the above-mentioned structure theorem of Green and Tao for sets with few ordinary lines (Theorem \ref{thm:GT}).

The analogous orchard problem for circles asks for the maximum number of circles passing through exactly four points from a set of $n$ points.
As far as we know, this question has not been asked before.
We determine the exact maximum and the extremal sets for all sufficiently large $n$.

Although we do not consider other related problems, we remark that similar questions have been asked for ordinary conics \cite{WW88, CDFGLMSST15, BVZ16}, ordinary planes \cite{B16}, and ordinary hyperplanes \cite{BM16}.

\subsection{Results}\label{sec:results}
Our first main result concerns the minimum number of ordinary circles spanned by a set of $n$ points, not all lying on a line or a circle, and the structure of sets of points that come close to the minimum.
The first part of the theorem solves Problem~6 in \cite{BMP05}*{Section~7.2}.

\begin{theorem}[Ordinary circles]\label{thm:main}
\mbox{}
\begin{enumerate}[label=\rom]
\item\label{parti} If $n$ is sufficiently large, the minimum number of ordinary circles determined by $n$ points in $\RR^2$, not all on a line or a circle, equals
\begin{equation*}
\begin{cases}
\frac14n^2-\frac32n & \text{if } n \equiv 0 \pmod{4},\\
\frac14 n^2 - \frac34n + \frac12 & \text{if } n \equiv 1 \pmod{4},\\
\frac14n^2 - n & \text{if } n \equiv 2 \pmod{4},\\
\frac14 n^2 - \frac54n + \frac32 & \text{if } n \equiv 3 \pmod{4}.
\end{cases}
\end{equation*}
\item Let $C$ be sufficiently large.
If a set $P$ of $n$ points in $\RR^2$ determines fewer than $\frac{1}{2}n^2-Cn$ ordinary circles, 
then $P$ lies on the disjoint union of two circles, or the disjoint union of a line and a circle.
\end{enumerate}
\end{theorem}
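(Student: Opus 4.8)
The plan is to feed the hypothesis into the structure theorem stated in the introduction and then eliminate every possible shape of the resulting curve except a union of two circles or of a line and a circle; throughout, ``$C$ sufficiently large'' will mean larger than each of the absolute constants produced along the way. A set determining fewer than $\frac12n^2-Cn<\frac12n^2$ ordinary circles has at most $Kn^2$ of them with $K=\frac12$, so for $n$ large the structure theorem supplies a curve $Z$ of degree at most four that contains all of $P$ outside a bounded set $E$. Writing $Z=\Gamma_1\cup\dots\cup\Gamma_r$ for its irreducible components ($r\le4$), the target is to show that each $\Gamma_i$ is a line or a circle, that at most one is a line, and that $E=\varnothing$ and the components are disjoint, so that $P$ lies on the disjoint union of two circles or of a line and a circle.

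The bulk of the work is to exclude a component $\Gamma$ that carries a constant fraction of $P$ and is an irreducible conic other than a circle, an irreducible cubic, or an irreducible quartic; for each such $\Gamma$ I would show that $P$ already spans at least $\frac12n^2-O(n)$ ordinary circles, contradicting the hypothesis for $C$ large. The ingredients are B\'ezout's theorem (a circle meets $\Gamma$ in at most $2\deg\Gamma\le8$ points) and the ``group-law'' description of concyclic tuples on $\Gamma$: four points of a non-circular conic are concyclic exactly when the corresponding parameters have a prescribed sum in $\RR$, $\RR^\times$, or $\RR/2\pi\ZZ$ (on the parabola $y=x^2$ the four $x$-coordinates sum to $0$; on an ellipse the four angular parameters sum to $0\bmod2\pi$), with the analogous statement on a cubic coming from the group law on its Jacobian. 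Counting those triples on $\Gamma$ whose circumscribed circle picks up a further point of $\Gamma$ then reduces to an additive-combinatorial estimate — an $m$-element subset of $\RR$ or of $\RR/2\pi\ZZ$ has at most $\frac14\binom m3-\Omega(m^2)$ four-element subsets with any fixed sum, the extremal case being equally spaced points, which is the same phenomenon that governs the Green--Tao bound for ordinary lines \cite{GT13} — and this forces at least $\frac12m^2-O(m)$ ordinary circles among the $m$ points of $P$ on $\Gamma$. Circles that instead acquire a point off $\Gamma$ are handled either by absorbing the remaining (few) components and $E$ into a lower-order error term, or, should some point off $\Gamma$ lie on unexpectedly many such circles, by inverting at that point: this turns those circles into lines, puts the image mostly on a curve of smaller degree, and lets the Green--Tao line structure theorem \cite{GT13} be reapplied.

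With every higher-degree component excluded, $Z$ is a union of lines and circles of total degree at most four. Two line components each meeting $P$ in $\Omega(n)$ points are ruled out directly: since a pencil of conics cuts an involution on any line not contained in it, a short double count shows that a positive proportion of the circles through two points on one line and one on the other are ordinary, producing $\Omega(n^3)$ ordinary circles. This leaves $Z$ equal to a single circle, a single line, a line and a circle, or two circles.

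The remaining and most delicate step is the clean-up: ruling out a point $q\in P$ off the curve that remains, and ruling out that two components touch or coincide. Here the naive count — circles through $q$ meeting one component in two points of $P$ — is \emph{not} decisive, because it is tight for the extremal configurations, so one must run a combined count over circles through $q$ that meet the components in every possible incidence pattern and show that their ordinary members already number $\frac12n^2-O(n)$ unless $q$ and the configuration sit in a rigid position (a degenerate pencil) that is itself impossible for $n$ large. This clean-up, together with the fact that the additive estimate of the second step must be sharp to within an additive $O(n)$ (the constant $\frac12$ cannot be improved: angularly equispaced points on an ellipse span exactly $\frac12n^2-2n+O(1)$ ordinary circles), is where I expect the real difficulty to lie. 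Discarding the trivial collinear and concyclic cases then leaves precisely the two configurations in the statement.
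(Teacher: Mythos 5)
There are genuine gaps, and the largest one is that your proposal only engages with part~(ii) of the theorem. Part~(i) asks for the \emph{exact} minimum in each residue class mod~$4$, and nothing in your outline produces either side of that equality: you give no constructions attaining $\tfrac14n^2-\tfrac32n$, $\tfrac14n^2-\tfrac34n+\tfrac12$, etc.\ (in the paper these come from aligned/offset double polygons with the special radii $r=(\cos(2\pi k/m))^{-1}$, and from punctured and inverted double polygons for odd $n$), and no matching lower bound over all configurations on two circles or a line and a circle. That lower bound is where the distinction between ordinary \emph{circles} and ordinary \emph{generalised} circles bites: $3$-point lines must be discounted, and the paper needs a separate bitangent-circle argument (Lemma~\ref{lemma:tangent}) showing that at most $n/2$ ordinary generalised circles of a double polygon pass through one external point, plus the Raz--Sharir--De~Zeeuw bound of Proposition~\ref{prop:rsz} to control $4$-point circles through an added point in the add/remove analysis (Lemma~\ref{lem:add/remove}). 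Your plan never confronts this distinction; even at the start, ``fewer than $\tfrac12n^2-Cn$ ordinary circles'' does not directly give $K=\tfrac12$ for a structure theorem that counts generalised circles --- one must first bound the $3$-point lines (the paper uses the trivial $\tfrac16n^2$ bound) before inverting.

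For part~(ii) itself, two steps in your outline conceal the real work. First, your component-exclusion rests on the asserted inequality that every $m$-subset of $\RR$ or $\RR/2\pi\ZZ$ has at most $\tfrac14\binom m3-\Omega(m^2)$ four-element subsets with a fixed sum; as stated this only yields $\Omega(m^2)$ ordinary circles, while you need at least $\tfrac12 n^2-O(n)$, i.e.\ the implicit constant must be pinned down to $\tfrac18$ (the coset value is $\tfrac1{24}m^3-\tfrac14m^2+O(m)$). Such an exact second-order extremal statement is of Green--Tao difficulty and is not something to cite as ``the same phenomenon''; the paper avoids it entirely by first extracting coset structure from Green--Tao's Proposition~A.5 (Corollary~\ref{cor:four}) and then only bounding near-cosets from below (Lemma~\ref{lem:extremal_cubic}). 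Second, dismissing circles through three points of $\Gamma$ that pick up a point off $\Gamma$ as a ``lower-order error term'' fails: a single point $q\notin\Gamma$ can lie on $\Omega(n^2)$ such circles (invert at $q$: these become $3$-point lines of $\Omega(n)$ points on a cubic or quartic), and when a second component carries $\Omega(n)$ points the cross-component circles are the \emph{dominant} phenomenon --- it is exactly this interaction that makes two circles extremal. Handling it requires working with the group law on the whole curve (e.g.\ $\RR/\ZZ\times\ZZ_2$ on two concentric circles) and the four-set version of the almost-closure lemma, which is the content of Lemmas~\ref{lem:cir_elliptic} and~\ref{lem:2cir}; your fallback of ``invert and reapply Green--Tao'' is a gesture at this machinery rather than an argument. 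Finally, you correctly flag the clean-up (showing no point of $P$ lies off the two curves and that they are disjoint) as delicate, but you leave it open, whereas it is precisely where the quantitative inputs above (Lemmas~\ref{lem:add/remove} and~\ref{lemma:tangent}) are indispensable.
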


In Section~\ref{sec:constr},
we will describe constructions that meet the lower bound in part~\ref{parti} of Theorem \ref{thm:main}.
For even $n$, the bound in part~\ref{parti} is attained by certain constructions on the disjoint union of two circles, 
while for odd $n$, the bound is attained by constructions on the disjoint union of a line and a circle.
The main tools in our proof are circle inversion and the structure theorem of Green and Tao \cite{GT13} for sets with few ordinary lines,
together with some classical results about algebraic curves and their interaction with inversion.

Let us define a \emph{generalised circle} to be either a circle or a line.
Because inversion maps circles and lines to circles and lines, it turns out that in our proof it is more natural to work with generalised circles.
Alternatively, we could phrase our results in terms of the \emph{inversive plane} (or \emph{Riemann sphere}) $\RR^2\cup\{\infty\}$, where $\infty$ is a single point that lies on all lines, 
which can then also be considered as circles.
Yet another equivalent view would be to identify the inversive plane with the sphere $\SS^2$ via stereographic projection, and consider circles on $\SS^2$, which are in bijection with generalised circles.
All our statements about generalised circles in $\RR^2$ could thus be formulated in terms of circles in $\RR^2\cup\{\infty\}$ or on~$\SS^2$.

We define an \emph{ordinary generalised circle} to be one that contains three points from a given set.
Our proof of Theorem \ref{thm:main} proceeds via an analogous theorem for ordinary generalised circles, which turns out to be somewhat easier to obtain.

\begin{theorem}[Ordinary generalised circles]\label{thm:ordgencircles}
\mbox{}
\begin{enumerate}[label=\rom]
\item If $n$ is sufficiently large, the minimum number of ordinary generalised circles determined by $n$ points in $\RR^2$, not all on a generalised circle, equals
\begin{equation*}
\begin{cases}
\frac14 n^2 - n & \text{if } n \equiv 0 \pmod{4},\\
\frac38 n^2 - n + \frac{5}{8} & \text{if } n \equiv 1 \pmod{4},\\
\frac14 n^2 - \frac{1}{2}n & \text{if } n \equiv 2 \pmod{4},\\
\frac38 n^2 - \frac{3}{2}n + \frac{17}{8} & \text{if } n \equiv 3 \pmod{4}.
\end{cases}
\end{equation*}
\item\label{genpartii} Let $C$ be sufficiently large.
If a set $P$ of $n$ points in $\RR^2$ determines fewer than $\frac{1}{2}n^2-Cn$ ordinary generalised circles, 
then
$P$ lies on two disjoint generalised circles.
\end{enumerate}
\end{theorem}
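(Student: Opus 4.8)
The plan is to transfer the problem from generalised circles to lines by circular inversion, invoke the Green--Tao theory, and then use the near-optimal hypothesis to pin down the resulting structure. For $p\in P$ write $\iota_p$ for inversion in a circle centred at $p$ and put $P'_p:=\iota_p(P\setminus\{p\})$, a set of $n-1$ points. Since $\iota_p$ maps the pencil of generalised circles through $p$ bijectively onto the set of lines and preserves incidences away from $p$, it carries the ordinary generalised circles of $P$ through $p$ bijectively onto the ordinary lines of $P'_p$. If some $P'_p$ were collinear, then $P\setminus\{p\}$, hence $P$, would lie on a single generalised circle, contrary to hypothesis; so every $P'_p$ is non-collinear. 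Counting each ordinary generalised circle once at each of its three points yields
\begin{equation*}
3N=\sum_{p\in P}m(P'_p),
\end{equation*}
where $N$ is the number of ordinary generalised circles of $P$ and $m(\cdot)$ the number of ordinary lines.

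For part~\ref{genpartii}, suppose $N<\tfrac12n^2-Cn$. Averaging the identity above, some $p_0\in P$ has $m(P'_{p_0})<\tfrac32 n-3C$, a linear bound with bounded leading coefficient, so the Green--Tao structure theorem (Theorem~\ref{thm:GT}) applies to $P'_{p_0}$ and places all but $O(1)$ of its points on a curve of degree at most three, of one of the three Green--Tao types (a line with one extra point, a conic with a line, or an irreducible cubic with a coset-type point set). Inverting this configuration back by $\iota_{p_0}$ and checking, type by type, how inversion acts on it -- in particular its behaviour at the line at infinity and at the two circular points -- one finds all but $O(1)$ points of $P$ on an algebraic curve $\Gamma$ of degree at most four. (This is the structure theorem announced in the abstract.) The $O(1)$ points of $P$ that may lie off $\Gamma$ span only $O(n)$ ordinary generalised circles, and will be absorbed into the error terms below; re-running the inversion at a few further points also lets one pin down these exceptional points exactly.

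Now the hypothesis $N<\tfrac12 n^2-Cn$ is used in full to identify $\Gamma$. First, $\Gamma$ has no irreducible component of degree three or more: such a component $\Gamma_0$ carries $\Theta(n)$ points of $P$, and inverting at a suitably chosen smooth point of $\Gamma_0$ produces an irreducible curve of high degree still carrying $\Theta(n)$ points but spanning only $O(n)$ ordinary lines, which contradicts the Green--Tao structure theorem together with B\'ezout's theorem (such a curve shares only $O(1)$ points with any Green--Tao structural curve). A similar argument -- using that a generalised circle meets a conic in at most four points, and inverting at a point of the conic -- rules out any component that is a conic other than a circle. Hence every irreducible component of $\Gamma$ is a generalised circle and, since $\deg\Gamma\le4$, there are at most two of them; one is excluded by hypothesis, so there are exactly two. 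Finally, if the two met in one or two points, then apart from $O(n)$ exceptions every generalised circle through two points of $P$ on one and one point of $P$ on the other would be ordinary, forcing at least $\tfrac12n^2-O(n)$ ordinary generalised circles -- a contradiction. Thus $P$ lies on two disjoint generalised circles, proving part~\ref{genpartii}. Part~(i) follows from this dichotomy: if $P$ does not lie on two disjoint generalised circles then $N\ge\tfrac12n^2-Cn$, far above the claimed minimum for large $n$; and if $P$ lies on two disjoint generalised circles -- two disjoint circles, or a disjoint line and circle -- with $a$ and $n-a$ points on the components, then (away from the boundary cases $a\in\{3,n-3\}$) the ordinary generalised circles are precisely those through two points of one component and one of the other that avoid a second point on that component, so $N$ equals $\binom a2(n-a)+\binom{n-a}2 a$ minus a correction counting generalised circles with two points on each component; bounding this correction while optimising over $a$ and the residue of $n$ modulo $4$ produces the four stated expressions as a lower bound, and the matching constructions are those of Section~\ref{sec:constr}.

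The main obstacle is the identification of $\Gamma$ in the third step: understanding precisely how circular inversion transforms the Green--Tao structural curves -- particularly nodal, cuspidal, or circular cubics and curves through the circular points -- so as to obtain a genuine degree-four bound, and then extracting the correct contradictions from Green--Tao and B\'ezout once a component has degree three or more. The reduction via inversion in the first step is routine, and the combinatorial optimisation and constructions underlying part~(i), while intricate and case-heavy, are essentially elementary.
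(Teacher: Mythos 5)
Your reduction via inversion and the averaging identity $3N=\sum_p m(P'_p)$ is fine, and it is essentially how the paper gets started. The genuine gap is in the step where you identify the curve $\Gamma$: you claim that an irreducible component of degree three, or a non-circular conic component, can be excluded because inverting at a point of it gives a high-degree curve contradicting Green--Tao plus B\'ezout. This is false for exactly the components that matter. If the cubic component is \emph{circular} (contains the two circular points), then its inverse at a regular point of it is again a circular cubic (Lemma~\ref{lem:inversion}\ref{inversion-case2}), i.e.\ precisely a Green--Tao structural curve, so no contradiction arises; similarly an ellipse inverts at a point of itself to an acnodal circular cubic (Proposition~\ref{prop:ellipseandacnodal}). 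Moreover these configurations cannot be excluded by any crude argument, because they are genuinely near-extremal: a coset of a subgroup of a smooth or acnodal circular cubic (equivalently, a subgroup of an ellipse) spans $\frac12 n^2-O(n)$ ordinary generalised circles, only barely above your threshold $\frac12 n^2-Cn$. Ruling them out therefore requires a sharp quantitative lower bound with a constant that $C$ can beat, which in the paper is Lemma~\ref{lem:extremal_cubic}; that lemma in turn rests on the concyclicity group law on circular cubics (Proposition~\ref{prop:cir_elliptic}, via Chasles/Cayley--Bacharach) and on the additive-combinatorial coset structure extracted from Green--Tao's Proposition A.5 (Corollary~\ref{cor:four}, Lemma~\ref{lem:cir_elliptic}). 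None of this machinery appears in your argument, so the dichotomy ``either $N\ge\frac12 n^2-Cn$ or $P$ lies on two disjoint generalised circles'' is not established.

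A second, related gap is in part (i). Knowing that $P$ lies on two disjoint generalised circles with $a$ and $n-a$ points does not determine $N$ up to the claimed exact constants: for generic points the count is cubic in $n$, and it drops to $\frac14 n^2+O(n)$ only when the points form (close to) an `aligned' or `offset' double polygon, i.e.\ a coset structure on the two circles. The paper needs Lemma~\ref{lem:2cir} (again via Corollary~\ref{cor:four}) to force this structure, and then the stability estimate Lemma~\ref{lem:add/remove} (using the Raz--Sharir--de Zeeuw bound of Proposition~\ref{prop:rsz}) to show that adding or removing even $O(1)$ points pushes the count to at least $\frac38 n^2-O(n)$, which is what pins down the extremal sets ($a\le1$, $b=0$) and yields the exact mod-$4$ values from the constructions of Section~\ref{sec:constr}. ``Bounding the correction while optimising over $a$'' does not substitute for this, since the correction depends on the arrangement of the points on the two circles, not merely on the bipartition sizes.
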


We also solve the analogue of the orchard problem for circles (for sufficiently large $n$).
We define a \emph{$4$-point \textup{(}generalised\textup{)} circle} to be a (generalised) circle that passes through exactly four points of a given set of $n$ points.
The `circular cubics' in part~\ref{genpartii} will be defined in Section \ref{sec:circularcurves}.

\begin{theorem}[$4$-point generalised circles]\label{thm:orchard}
\mbox{}
\begin{enumerate}[label=\rom]
\item  If $n$ is sufficiently large, the maximum number of $4$-point generalised circles determined by a set of $n$ points in $\RR^2$ is equal to
\begin{equation*}
\begin{cases}
\frac{1}{24}n^3 - \frac14 n^2 + \frac{5}{6}n  & \text{if } n \equiv 0 \pmod{8},\\
\frac{1}{24}n^3 - \frac14 n^2 + \frac{11}{24}n - \frac{1}{4}  & \text{if } n \equiv 1, 3, 5, 7 \pmod{8},\\
\frac{1}{24}n^3 - \frac14 n^2 + \frac{7}{12}n - \frac12	& \text{if } n \equiv 2, 6 \pmod{8},\\
\frac{1}{24}n^3 - \frac14 n^2 + \frac56n - 1 & \text{if } n \equiv 4 \pmod{8}.
\end{cases}
\end{equation*}
\item 
Let $C$ be sufficiently large.
If a set $P$ of $n$ points in $\RR^2$ determines more than $\frac{1}{24}n^3 - \frac{7}{24} n^2 + Cn$ $4$-point generalised circles,
then up to inversions, $P$ lies on an ellipse or a smooth circular cubic.
\end{enumerate}
\end{theorem}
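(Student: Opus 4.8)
The plan rests on a counting identity and an inversion dictionary. Since every three points of $P$ lie on a unique generalised circle, writing $t_m$ for the number of generalised circles meeting $P$ in exactly $m$ points we have
\begin{equation*}
\binom{n}{3}=t_3+4t_4+\sum_{m\ge 5}\binom{m}{3}t_m .
\end{equation*}
Moreover, fixing $p_0\in P$ and applying an inversion centred at $p_0$, the $4$-point generalised circles of $P$ through $p_0$ correspond bijectively to the $3$-point lines of the image of $P\setminus\{p_0\}$, a set of $n-1$ points. Summing the Green--Tao exact orchard bound $\lfloor\tfrac16 m(m-3)\rfloor+1$ for $3$-point lines over the $n$ choices of centre $p_0$ (each $4$-point generalised circle being counted four times) already gives the upper bound $\tfrac{1}{24}n^3-O(n^2)$ of part~(i); the exact formulae will follow once the extremal configurations are identified in part~(ii).

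For part~(ii): if $P$ spans more than $\tfrac1{24}n^3-\tfrac7{24}n^2+Cn$ four-point generalised circles, the identity forces $t_3+\sum_{m\ge 5}\binom m3 t_m<\tfrac23 n^2-O(n)$, so $P$ spans fewer than $\tfrac23 n^2$ ordinary circles and the structure theorem stated in the introduction applies: all but $O(1)$ points of $P$ lie on a curve $Z$ with $\deg Z\le 4$. I would then classify $Z$. Splitting the points of $P$ among two or more components each carrying $\Omega(n)$ points already costs a cubic number of four-point circles, so we may take $Z$ irreducible, absorbing any component with $O(1)$ points into the exceptional set. By B\'ezout's theorem a circle meets $Z$ in at most $2\deg Z$ points, two of which are absorbed at the circular points at infinity exactly when $Z$ passes through them --- so, to avoid losing a cubic number of triples to generalised circles with five or more points, $Z$ must be a conic, or a cubic or quartic through both circular points at infinity. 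The classical fact that circular cubics and bicircular quartics are inversive images of conics and of circular cubics then lets an inversion reduce us to $Z$ a conic or a circular cubic. Finally, on an ellipse the group law governing concyclicity is $\RR/\ZZ$ (as is the elliptic group law on the compact component of a smooth circular cubic), whereas on a hyperbola, a parabola, or a singular circular cubic it is $\RR^\times$ or $\RR$; since a group without large finite subgroups has only $cn^3+O(n^2)$ four-element subsets with a prescribed sum for some $c<\tfrac1{24}$, those possibilities fall below the threshold and are discarded, leaving an ellipse or a smooth circular cubic. A separate argument --- removing a hypothetical point of $P$ lying off $Z$ and comparing with the exact maximum for the remaining $n-1$ points on such a curve --- shows that $P$ has no exceptional points, which is part~(ii).

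Given part~(ii), part~(i) is a counting problem on the curve. Because a circle is a conic through the two circular points at infinity, four points of an ellipse --- or of a smooth circular cubic --- are concyclic precisely when their parameters sum to a fixed constant $\kappa$ in the relevant copy of $\RR/\ZZ$. Hence, for an $n$-point subset equal to a coset of the order-$n$ subgroup, the $4$-point generalised circles correspond to the four-element subsets of $\ZZ_n$ with sum $\kappa$, of which there are $\tfrac1{24}(n^3-6n^2+O(n))$; equivalently, choosing the coset so that no three of its points are collinear, the ordinary generalised circles are essentially just the $\approx n^2/2$ circles tangent to the curve, and $4t_4=\binom n3-\tfrac12 n^2+O(n)$. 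Optimising the coset, the constant $\kappa$, the use of proper subgroups together with a few extra points, and the ellipse-versus-cubic alternative --- and checking that the inversive images of the extremal line-orchard configurations realise these values --- yields the exact formulae, the dependence on $n\bmod 8$ arising entirely from the interaction of $\kappa$ with the $2$- and $4$-torsion of $\ZZ_n$, a parity phenomenon already present for lines.

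The hard parts are these last refinements. In part~(i), turning ``$\tfrac1{24}(n^3-6n^2+O(n))$ subsets of prescribed sum'' into the four residue-dependent formulae requires careful bookkeeping of degenerate tuples, of $2$- and $4$-torsion, of the optimal $\kappa$, and of the coset-versus-subgroup choices, plus verifying that the inverted extremal line configurations attain them. In part~(ii), the genuine obstacle is the curve classification together with the elimination of points off $Z$: one must rule out every reducible degree-$\le 4$ curve and every ``wrong'' real form using only the cubic-order count of four-point circles, and then upgrade ``all but $O(1)$ points on $Z$'' to ``all points on $Z$''.
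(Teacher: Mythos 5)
There is a genuine gap, and it is concentrated exactly where you defer the work: the identification of the extremal configurations and the mod-$8$ bookkeeping. Your plan treats every surviving curve as carrying a concyclicity group isomorphic to $\RR/\ZZ$ (you even say the relevant group on a smooth circular cubic is the $\RR/\ZZ$ of its compact component), and you propose to extract the exact formulae from four-element subsets of $\ZZ_n$ with prescribed sum, with the residue dependence coming from the $2$- and $4$-torsion of $\ZZ_n$. But the torsion of $\ZZ_n$ only sees $n \bmod 4$, and the cyclic count $\tfrac{1}{24}(n^3-6n^2+(8+3\delta_n)n-6\eps_n)$ gives at best $\tfrac{1}{24}n^3-\tfrac14 n^2+\tfrac{7}{12}n-1$ when $n\equiv 0\pmod 4$. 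The stated maximum for $n\equiv 0,4\pmod 8$ is larger ($\tfrac56 n-2$, resp.\ $\tfrac56 n-1$ in the linear term), and it is attained only by a coset of a subgroup $H_n\cong\ZZ_{n/2}\times\ZZ_2$ on a smooth circular cubic with \emph{two} real components, whose group is $\RR/\ZZ\times\ZZ_2$ (Construction~\ref{constr:cubic}). Your framework, restricted to cyclic groups, cannot produce any mod-$8$ dependence at all and would ``prove'' a maximum that is beaten by this construction; so part~(i) as planned would come out false for $n\equiv 0\pmod 4$, and the extremal sets you would name (cosets on an ellipse) are not extremal there. The paper's route avoids this by feeding the problem through the strong structure theorem (Theorem~\ref{thm:strong}), whose case~\ref{caseiii} explicitly allows non-cyclic $H$, and then comparing Constructions~\ref{constr:ellipse} and~\ref{constr:cubic}.

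Two secondary points. First, your curve classification from the weak structure theorem is not sound as argued: B\'ezout only bounds how often a circle can meet $Z$, and a circle meeting $Z$ in six or eight points does not force those extra intersections to belong to $P$, so ``losing a cubic number of triples to circles with five or more points'' does not follow; likewise ``two components with $\Omega(n)$ points each costs a cubic number of $4$-point circles'' needs an argument (it is true for the configurations that actually arise, e.g.\ two circles give $\tfrac1{32}n^3$, but not a formal consequence of what you wrote). The paper sidesteps all of this by quoting the strong structure theorem, which already reduces to a line, an ellipse subgroup, a cubic coset, or a double polygon. Second, your elimination of ``singular circular cubics'' via torsion-free groups is wrong for acnodal cubics, whose group is $\RR/\ZZ$; these must instead be folded into the ellipse case by inverting in the acnode (Proposition~\ref{prop:ellipseandacnodal}), as the paper does. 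Your final step upgrading ``all but $O(1)$ points on the curve'' to ``no exceptional points'' is the right idea, but it needs the quantitative inclusion-exclusion of the paper (removing a coset point destroys $\tfrac16 m^2-O(m)$ four-point circles, adding an outside point gains only $O(m)$, whence $a+3b<1$), not just a comparison with the $(n-1)$-point maximum.
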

Theorem~\ref{thm:orchard} remains true if we replace `generalised circles' by `circles'.
This is because we can apply an inversion to any set of $n$ points with a maximum number of generalised circles in such a way that all straight-line generalised circles become circles.
Therefore, the maximum is also attained by circles only.

The proofs of the above theorems are based on the following structure theorems in the style of Green and Tao \cite{GT13}.
The first gives a rough picture, by stating that a point set with relatively few ordinary generalised circles must lie on a bicircular quartic, a specific type of algebraic curve of degree four that we introduce in Section \ref{sec:circularcurves}.

\begin{theorem}[Weak structure theorem]\label{thm:weak}
Let $K>0$ and let $n$ be sufficiently large depending on $K$.
If a set $P$ of $n$ points in $\RR^2$ spans at most $Kn^2$ ordinary generalised circles,
then all but at most $O(K)$ points of $P$ lie on a bicircular quartic.
\end{theorem}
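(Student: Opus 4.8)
The plan is to transfer the problem to ordinary lines via circular inversion, apply the Green--Tao structure theorem \cite{GT13} there, and then pull the resulting low-degree curve back through the inversion. First I would fix a point $p\in P$ that lies on relatively few generalised circles through pairs of other points; since $P$ spans at most $Kn^2$ ordinary generalised circles in total, an averaging argument shows that for most $p\in P$ the number of ordinary generalised circles through $p$ is $O(Kn)$. Applying a circular inversion $\iota$ centred at such a $p$ turns the generalised circles through $p$ into lines, and a generalised circle through $p$ containing exactly three points of $P$ becomes a line containing exactly two points of $\iota(P\setminus\{p\})$. Hence the image point set $P' := \iota(P\setminus\{p\})$, of size $n-1$, spans $O(Kn)$ ordinary lines.

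Next I would invoke the Green--Tao structure theorem (Theorem~\ref{thm:GT}) for point sets spanning $O(Kn)$ ordinary lines: all but $O(K)$ points of $P'$ lie on an algebraic curve of degree at most three — a cubic, possibly reducible, possibly a conic or a line together with extra points. Let $\gamma'$ be this curve. Pulling back under $\iota^{-1}$ (again a circular inversion, with the same centre $p$), the curve $\gamma'$ maps to an algebraic curve $\gamma$ that contains all but $O(K)$ points of $P$. The key algebraic fact I would use — to be developed in Section~\ref{sec:circularcurves} — is that circular inversion sends a curve of degree at most three to a curve of degree at most four, and in fact the image of a cubic under inversion is (contained in) a \emph{bicircular quartic}: a quartic having the two circular points at infinity as singular (double) points. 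Indeed, inversion $\iota$ is given in coordinates by a quadratic map, so composing a cubic's defining polynomial with $\iota$ and clearing denominators yields a sextic, but the factor by which the degree genuinely inflates is controlled by the multiplicity of the base points of $\iota$ on the curve; a careful accounting shows the true degree is at most $4$, and the circular points at infinity are forced to be double points. Lines and conics through $p$ or not through $p$ are handled as degenerate cases and still land inside a bicircular quartic (allowing the quartic to be reducible, e.g.\ a product of two circles or a circle and a line-pair).

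The main obstacle is precisely this degree-control step: one must show that after inversion the cubic does not blow up to a genuine quintic or sextic, and that the two circular points at infinity appear with the right multiplicity so that the image qualifies as a \emph{bicircular} quartic in the sense we will define. This requires being careful about (a) whether the centre $p$ of inversion lies on $\gamma'$ and with what multiplicity — if $p\in\gamma'$ the local behaviour at $p$ is exactly what cancels the extra degree — and (b) the behaviour of $\gamma'$ at infinity, since the circular points at infinity are the base points of $\iota^{-1}$ and determine the singularities of $\gamma$. A secondary technicality is bookkeeping the $O(K)$ exceptional points: the Green--Tao theorem discards $O(K)$ points of $P'$, the inversion is a bijection away from $p$ and $\infty$, and we add back $p$ itself, so in total at most $O(K)+O(1)=O(K)$ points of $P$ fail to lie on $\gamma$, as claimed. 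I would also need to treat the edge case where the chosen $p$'s image curve $\gamma'$ is actually contained in a line or conic through $p$ (so that $\gamma$ is a line, circle, or conic) — these are all special bicircular quartics, so the statement still holds.
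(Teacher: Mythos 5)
Your overall strategy (averaging to find a good centre, inverting, applying Theorem~\ref{thm:GT}, and pulling the curve back) is indeed how the paper begins, but the crucial degree-control step you defer to ``careful accounting'' is not a technicality --- as stated it is false, and repairing it requires an idea your proposal does not contain. Inversion is a quadratic transformation with base points $p$, $\alpha$, $\beta$, and the image of a curve of degree $d$ with multiplicities $m_p,m_\alpha,m_\beta$ at these points has degree $2d-m_p-m_\alpha-m_\beta$. The cubic $\gamma'$ produced by Green--Tao need not be circular: if it misses $\alpha$ and $\beta$, then its inverse in a regular point on it is a quintic, and in a point off it a sextic, of circular degree three (this is exactly Case~\ref{inversion-case3} of Lemma~\ref{lem:inversion}); such a curve is \emph{not} contained in any bicircular quartic, so a single inversion cannot yield the theorem. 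A similar problem occurs in the line-plus-conic case of Theorem~\ref{thm:GT}: if the conic is not a circle, its inverse is a bicircular quartic, and the union of that quartic with the image of the line (a line or circle) again has circular degree three, hence does not sit inside one bicircular quartic.

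The missing ingredient is that the hypothesis gives you not one but \emph{many} admissible centres of inversion (at least $2n/3$ points of $P$ have $O(Kn)$ ordinary generalised circles through them), and the paper exploits this by performing a \emph{second} inversion at another such point $q$ lying on the pulled-back curve and applying Theorem~\ref{thm:GT} again. Comparing the two applications via Lemma~\ref{lem:inversion} forces the unwanted cases out: if the cubic were non-circular, then every such $q$ would have to be a singularity of the (irreducible, degree at most six) curve $I_p(\gamma')$, of which there are at most ten, a contradiction; and in the line-plus-conic case the second inversion forces the conic's image to be a circle, so that $P$ lies on a line and a disjoint circle or on two disjoint circles. Only after these eliminations is every surviving possibility (line, circle, ellipse, line plus disjoint circle, two disjoint circles, acnodal or smooth circular cubic, suitable bicircular quartic) genuinely contained in a bicircular quartic. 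Without this second-inversion argument your proof does not close; with it, you essentially recover the paper's proof.
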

Ball \cite{B16} concurrently obtained a similar result as a consequence of a structure theorem for ordinary planes in $\RR^3$.
He shows that $n$ points with $O(n^{2+\frac{1}{6}})$ ordinary circles must lie mostly on a quartic curve.

We define bicircular quartics in Section \ref{sec:circularcurves}; they can be reducible, so in Theorem \ref{thm:weak} the set $P$ may also lie mostly on a lower-degree curve contained in a bicircular quartic.
Our proof actually gives a more precise list of possibilities.
The curve that $P$ mostly lies on can be: a line; a circle; an ellipse; a line and a disjoint circle;
two disjoint circles; a circular cubic that is acnodal or smooth;
or a bicircular quartic that is an inverse of an acnodal or smooth circular cubic.

A more precise characterisation of the possible configurations with few ordinary generalised circles is given in the following theorem.
The group structures referred to in the theorem are defined in Section~\ref{sec:groups}; the circular points at infinity ($\alpha$ and $\beta$) referred to in Case~\ref{caseiii} are introduced in Section \ref{sec:circularcurves}; and the `aligned' and `offset' double polygons are defined in Section~\ref{sec:constr}.

\begin{theorem}[Strong structure theorem]\label{thm:strong}
Let $K>0$ and let $n$ be sufficiently large depending on $K$.
If a set $P$ of $n$ points in $\RR^2$ spans at most $Kn^2$ ordinary generalised circles, 
then up to inversions and similarities, $P$ differs in at most $O(K)$ points from a configuration of one of the following types:
\begin{enumerate}[label=\rom]
\item\label{casei} A subset of a line;
\item\label{caseii} A coset $H \oplus x$ of a subgroup of an ellipse, for some $x$ such that $4x \in H$;
\item\label{caseiii} A coset $H \oplus x$ of a subgroup $H$ of a smooth circular cubic, 
for some $x$ such that $4x \in H \oplus \alpha \oplus\beta$, where $\alpha$ and $\beta$ are the two circular points at infinity;
\item\label{caseiv} A double polygon that is `aligned' or `offset'.
\end{enumerate}
Conversely, every set of these types defines at most $O(Kn^2)$ ordinary generalised circles.
\end{theorem}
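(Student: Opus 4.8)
\emph{Reducing to four curves.} The plan is to feed $P$ into the weak structure theorem, reduce the resulting list of host curves to a short canonical list using inversions, and then on each surviving curve translate ``few ordinary generalised circles'' into an additive statement that is resolved either directly or through the Green--Tao structure theorem. By the refined form of Theorem~\ref{thm:weak} (the explicit list recorded after its statement), all but $O(K)$ points of $P$ lie on a curve $\gamma$ that is a line, a circle, an ellipse, a line together with a disjoint circle, two disjoint circles, an acnodal or smooth circular cubic, or an inverse of an acnodal or smooth circular cubic. Since the conclusion is only asserted up to inversions and similarities, I would collapse this list: a circle inverts to a line; any two disjoint generalised circles can be inverted to two concentric circles, merging the ``line and circle'' and ``two circles'' cases; inverting an acnodal circular cubic from its acnode (a quadratic transformation based at the acnode and the circular points $\alpha,\beta$) drops the degree to two, turning $\gamma$ into a conic which in this real situation is an ellipse; and the bicircular quartic cases revert to circular cubics by construction. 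Thus, up to inversions and similarities, $\gamma$ is a line, an ellipse, two concentric circles, or a smooth circular cubic, and the line case is immediately Case~\ref{casei}.

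\emph{The concyclicity dictionary and the additive step.} On $\gamma$ I would invoke the group law of Section~\ref{sec:groups} and prove, using B\'ezout and the Cayley--Bacharach relations, a dictionary for concyclicity: on an ellipse parametrised by $\theta\mapsto(a\cos\theta,b\sin\theta)$ with $a\ne b$, four points are concyclic iff $\theta_1+\theta_2+\theta_3+\theta_4\equiv 0$; on a smooth circular cubic, since a circle is a conic meeting the cubic in the six points $\alpha,\beta,p_1,p_2,p_3,p_4$, four affine points are concyclic iff $p_1\oplus p_2\oplus p_3\oplus p_4=\ominus\alpha\ominus\beta$; and on two concentric circles, parallelism of radical axes shows two points on one circle and two on the other are concyclic iff the two angle sums agree. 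Granting this, each ordinary generalised circle of $P$ corresponds --- up to $O(n^2)$ circles coming from tangencies, from the $O(K)$ points off $\gamma$, and from generalised circles that are straight lines --- to a triple of points of $P$ whose unique completing fourth point on $\gamma$ fails to lie in $P$. So the hypothesis says that for all but $O(Kn^2)$ triples from $P$, the completing point again lies in $P$. Fixing a generic point of $P$ reduces this to a two-variable closure property, which forces $P$ to have near-maximal additive energy in the relevant group. In the cubic case this is precisely the situation solved by the Green--Tao theorem (Theorem~\ref{thm:GT}) after inverting from a point of $P$ on $\gamma$ --- which keeps $\gamma$ a cubic and turns concyclicity through that point into collinearity --- so $P$ differs in $O(K)$ points from a coset $H\oplus x$ of a finite subgroup, and the dictionary then forces $4x\in H\oplus\alpha\oplus\beta$, giving Case~\ref{caseiii}. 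In the ellipse case the energy bound forces the parameter set to lie within $O(K)$ of a coset of a cyclic subgroup $\tfrac{2\pi}{m}\ZZ/2\pi\ZZ$, which a rotation turns into the subgroup itself --- Case~\ref{caseii}. In the concentric-circles case the two parameter sets must each lie within $O(K)$ of a coset of the \emph{same} cyclic subgroup, and requiring the mixed closure $A+A-B\subseteq B$ and $B+B-A\subseteq A$ to hold forces the relative shift to be either $0$ or half a step, which is exactly the aligned/offset dichotomy of Section~\ref{sec:constr} --- Case~\ref{caseiv}.

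\emph{The converse, and the main obstacle.} For the last sentence I would run the dictionary backwards: a subset of a line spans no ordinary generalised circle (a circle meets a line in at most two points, and the spanning line is not ordinary); for a subgroup of an ellipse or a coset $H\oplus x$ with $4x\in H\oplus\alpha\oplus\beta$ on a circular cubic the completing point of every triple automatically lies in the configuration, so the only ordinary generalised circles are the $O(n^2)$ tangent and straight-line ones, and similarly for aligned and offset double polygons; and perturbing by $O(K)$ points adds only the generalised circles through at least one perturbed point, of which there are $O(Kn^2)$. The main obstacle is the additive step: passing from an ``almost all triples'' statement to an \emph{exact} coset description with $O(K)$ error and with the precise side conditions. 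In the cubic case the Green--Tao theorem does the heavy lifting, but one must still check that inversion transports its conclusion (including the extremal coset, where the relevant power of $x$ changes from $3$ to $4$) back to $\gamma$ faithfully; in the ellipse and concentric-circle cases one needs a quantitative stability result for near-maximal additive energy in $\RR/2\pi\ZZ$. A secondary nuisance is keeping the $O(K)$ control on the exceptional set through the chain of inversions in the first step.
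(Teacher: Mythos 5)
Your reduction step and your concyclicity dictionary match the paper: Theorem \ref{thm:weak} plus inversions collapses the host curve to a line, an ellipse/acnodal circular cubic, a smooth circular cubic, or two concentric circles (the paper inverts the ellipse \emph{into} an acnodal cubic rather than the acnode out of it, a cosmetic difference), and your dictionary is exactly Propositions \ref{prop:cir_elliptic}, \ref{prop:ellipse} and \ref{prop:double-polygon-concyclic}. The genuine gap is the additive step, which you yourself defer as ``the main obstacle'': passing from ``for all but $O(Kn^2)$ triples $(a,b,c)$ of $P$ the completing point $\omega\ominus(a\oplus b\oplus c)$ lies again in $P$'' to ``$P$ has symmetric difference $O(K)$ with a coset $H\oplus x$, and $4x\in H\oplus\alpha\oplus\beta$''. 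The tools you point to do not deliver this. Theorem \ref{thm:GT}, as quoted, concludes only that the inverted set lies mostly on a cubic; it carries no coset information, so ``$P$ differs in $O(K)$ points from a coset $H\oplus x$'' does not follow from it (the coset statement is a different, stronger theorem of Green and Tao, not quoted here, and even granting it you would still have to transport a collinearity coset on the inverted cubic back through the inversion and convert its $3x$-type side condition into $4x\in H\oplus\alpha\oplus\beta$, a check you flag but do not perform). Likewise ``near-maximal additive energy'' plus an unspecified ``quantitative stability result'' is the wrong shape of additive statement: energy-stability results produce a large structured piece, not a set within symmetric difference $O(K)$ of a single coset.

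What the paper actually uses at this point is the almost-closure lemma, Green--Tao's Proposition A.5 (Proposition \ref{prop:A5}): if all but $Kn$ pairs $(a,b)\in A\times B$ satisfy $a\oplus b\in C$, then $A$, $B$, $C$ are within $O(K)$ of cosets of a common subgroup. A pigeonhole argument (your ``fix a generic point'' move is precisely its first step) extends this to four sets (Corollary \ref{cor:four}), which is then applied \emph{directly on the circular cubic} with $A=B=C=P'$ and $D=\omega\ominus P'$ --- no second inversion and no appeal to any ordinary-lines structure theorem --- and the condition $4x\in H\oplus\omega$ falls out of comparing the coset obtained for $D$ with the one for $A$; the same corollary with $A=B=P_1$, $C=P_2$, $D=\ominus P_2$ gives the aligned/offset dichotomy on concentric circles. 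Without this lemma (or an equivalent substitute) your argument is incomplete at its central step. Your converse sketch is fine and is essentially the paper's, via the stability estimate of Lemma \ref{lemma:stability}.
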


In Section \ref{sec:circularcurves}, we carefully introduce circular cubics and bicircular quartics, and show their connection to inversion.
In Section \ref{sec:groups}, we define group laws on these curves, which help us construct point sets with few ordinary (generalised) circles in Section \ref{sec:constr}.
In Section \ref{sec:proof}, which forms the core of our proof, we derive Theorems \ref{thm:weak} and \ref{thm:strong} from the structure theorem of Green and Tao \cite{GT13}.
In Section \ref{sec:extremal}, we combine the structure theorems with our analysis of the constructions from Section \ref{sec:constr} to establish the precise statements in Theorems \ref{thm:main}, \ref{thm:ordgencircles}, and \ref{thm:orchard}.


\section{Circular curves and inversion}\label{sec:circularcurves}

The key tool in our proof is \emph{circle inversion}, as it was in the earlier papers \cites{E67, BB94, Z11} on the ordinary circles problem; the first to use circle inversion in Sylvester-Gallai problems was Motzkin \cite{M51}.
The simple reason for the relevance of circle inversion is that if we invert in a point of the given set, an ordinary circle through that point is turned into an ordinary line.
Thus we can use results on ordinary lines, like those of Green and Tao \cite{GT13}, to deduce results about ordinary circles.
To do this successfully, we need a thorough understanding of the effect of inversion on algebraic curves, and in particular we need to introduce the special class of \emph{circular curves}.

\subsection{Circular curves and circular degree}\label{sec:inversion}

In this subsection, we work in the real projective plane $\RR\PP^2$, and partly in the complex projective plane $\CC\PP^2$.
See for instance \cite{ST92}*{Appendix A} for an appropriate introduction to projective geometry.
We use the homogeneous coordinates $[x:y:z]$ for points in $\RR\PP^2$ or $\CC\PP^2$,
and we think of the line with equation $z=0$ as the line at infinity.
An affine algebraic curve in $\RR^2$,
defined by a polynomial $f\in \RR[x,y]$,
can be naturally extended to a projective algebraic curve, 
by taking the zero set of the homogenisation of $f$.
This curve in $\RR\PP^2$ then extends to $\CC\PP^2$, by taking the complex zero set of the homogenised polynomial.

We define the \emph{circular points} to be the points
\[\alpha = [i:1:0],~~~\beta = [-i:1:0] \]
on the line at infinity in $\CC\PP^2$.
The circular points play a key role in this paper, 
due to the fact that every circle contains both circular points. 
Moreover, 
any real conic containing $\alpha$ and $\beta$ is either a circle, or a union of a line with the line at infinity. 
We could thus consider a generalised circle to be a conic that contains both circular points.

\begin{definition}
An algebraic curve in $\RR\PP^2$ is \emph{circular} if it contains $\alpha$ and $\beta$.
For $k\ge 2$, an algebraic curve in $\RR\PP^2$ is \emph{$k$-circular} if it has singularities of multiplicity at least $k$ at both $\alpha$ and $\beta$.
\end{definition}

A classical reference for circular curves is Johnson \cite{J77},
while a more modern one is Werner \cite{W11}.
Let us make the definition more explicit in three concrete cases.

A \emph{generalised circle} is an algebraic curve of degree two that contains $\alpha$ and $\beta$;
equivalently, it is a curve in $\RR\PP^2$ defined by a homogeneous polynomial of the form
\begin{equation*}
t(x^2+y^2) + \ell(x,y,z)z ,
\end{equation*}
where $t\in\RR$, and $\ell\in\RR[x,y,z]$ is a non-trivial linear form.
If $t\neq 0$, then the curve is a circle, while if $t=0$, the curve is the union of a line with the line at infinity.

A \emph{circular cubic} is an algebraic curve of degree three that contains $\alpha$ and $\beta$;
equivalently, it is any curve in $\RR\PP^2$ defined by a homogeneous polynomial of the form
\begin{equation}\label{eq:circular-cubic}
(u x + v y)(x^2+y^2) + q(x,y,z)z ,
\end{equation}
where $u,v\in\RR$, and $q\in \RR[x,y,z]$ is a non-trivial quadratic homogeneous polynomial.
Note that we do not require a circular cubic to be irreducible or smooth.
For instance, the union of a line and a circle is a circular cubic, 
and so is the union of any conic with the line at infinity (take $u=v=0$ in \eqref{eq:circular-cubic}).

A \emph{bicircular quartic} is an algebraic curve of degree four that is $2$-circular; 
equivalently, it is any curve in $\RR\PP^2$ defined by a homogeneous polynomial of the form
\begin{equation}\label{eq:bicircular-quartic}
t(x^2+y^2)^2+ (u x + v y)(x^2+y^2)z + q(x,y,z)z^2,
\end{equation}
where $t,u,v\in\RR$, and $q\in \RR[x,y,z]$ is a non-trivial homogeneous quadratic polynomial (see \cite{W11}*{Section 8.2} for a proof that a quartic is $2$-circular if and only if its equation has the form \eqref{eq:bicircular-quartic}).
A noteworthy example of a bicircular quartic is a union of two circles,
for which it is easy to see that the curve has double points at $\alpha$ and $\beta$, since both circles contain those points.

Every circular cubic is contained in  a bicircular quartic,
since for $t=0$ in \eqref{eq:bicircular-quartic} we get a union of a circular cubic and the line at infinity.
A non-circular conic is also contained in a bicircular quartic, since for $t=u=v=0$ in \eqref{eq:bicircular-quartic} we get a union of a conic and $z^2=0$, which is a double line at infinity.

\begin{definition}\label{def:circular}
The \emph{circular degree} of an algebraic curve $\gamma$ in $\RR\PP^2$ is the smallest $k$ such that $\gamma$ is contained in a $k$-circular curve of degree $2k$.
\end{definition}

The circular degree is well-defined, since given any curve $\gamma$ of degree $k$, we can add $k$ copies of the line at infinity, to get a $k$-circular curve of degree $2k$.

For example, a line has circular degree one, since its union with the line at infinity is a $1$-circular curve of degree two.
A conic that is not a circle has circular degree two, 
since its union with two copies of the line at infinity is a $2$-circular curve of degree four.
Similarly, a circular cubic has circular degree two, since its union with the line at infinity is a $2$-circular curve of degree four.
We can thus classify curves of low circular degree as follows:

\begin{itemize}
\item {\it Circular degree one}: lines and circles (that is, generalised circles);
\item {\it Circular degree two}: non-circular conics, circular cubics, and bicircular quartics;
\item {\it Circular degree three}: non-circular cubics, circular quartics, $2$-circular quintics, and $3$-circular sextics.
\end{itemize}
 
This classification is important to us, because we will see that circular degree is invariant under inversion.

We have defined circular curves and circular degrees in the projective plane, because that is their most natural setting.
In the rest of the paper, to avoid confusion between the projective and inversive planes, 
we will use these notions for curves in $\RR^2$, 
with the understanding that to inspect the definitions we should consider $\RR\PP^2$ and $\CC\PP^2$.

\subsection{Inversion}

Circular curves are intimately related to circle inversion, which we now introduce.
A general reference is \cite{B00}.

\begin{definition}\label{def:inversion}
Let $C(p,r)$ be the circle with centre $p = (x_p, y_p) \in \RR^2$ and radius $r>0$. The \emph{circle inversion} with respect to $C(p,r)$ is the mapping 
$I_{p,r} : \RR^2 \setminus\{p\} \rightarrow \RR^2 \setminus\{p\}$ 
defined by 
\begin{equation*}
I_{p,r}(x,y) = \left( \frac{r^2(x-x_p)}{(x-x_p)^2+(y-y_p)^2} + x_p, \frac{r^2(y-y_p)}{(x-x_p)^2+(y-y_p)^2} + y_p \right)
\end{equation*}
for $(x,y) \ne p$.
We write $I_p$ for $I_{p,1}$.
We call $p$ the \emph{centre} of the inversion $I_{p,r}$.
\end{definition}

In the inversive plane $\RR^2\cup \{\infty\}$, 
the inversion map can be completed by  setting
$I_{p,r}(p) = \infty$ and $I_{p,r}(\infty) = p$,
so that inversions take generalised circles to generalised circles.
The group of transformations of the inversive plane generated by the inversions and the similarities is called the inversive group.
It is known that a bijection of the inversive plane that takes generalised circles to generalised circles has to be an element of this group, and that any element of this group is either a similarity or an inversion followed by an isometry \cite{Co69}*{Theorem~6.71}.

The image of an algebraic curve in $\RR^2$ under an inversion is also an algebraic curve, in the following sense.
\begin{definition}
For any algebraic curve $\gamma$ there is an algebraic curve $\gamma'$ such that 
\[I_{p,r}(\gamma\setminus\{p\}) = \gamma'\setminus\{p\}.\]
We refer to $\gamma'$ as the \emph{inverse of $\gamma$ with respect to the circle $C(p,r)$}, and abuse notation slightly by writing $\gamma'=I_{p,r}(\gamma)$.
Also, since for different choices of radius $r$, $I_{p,r}(\gamma)$ differs only by a dilatation in $p$, we will often only consider the inverse $I_p(\gamma)=I_{p,1}(\gamma)$ and refer to it as the \emph{inverse of $\gamma$ in the point $p$}.
\end{definition}
If a curve has degree $d$, then its inverse has  degree at most $2d$ \cite{W11}*{Theorem 4.14}.
If $\gamma$ is irreducible, then its inverse is also irreducible.
Note that inverses of algebraic curves can behave somewhat unintuitively; for instance, Proposition \ref{prop:ellipseandacnodal} states that the inverse of an ellipse has an isolated point, which is surprising if one thinks of an ellipse as just a closed continuous curve.

It is well known that the inverses of generalised circles are again generalised circles.
It turns out that, more generally,
circular degree is preserved under inversion.
We now make precise what this means for curves of low circular degree.
A proof can be found in the classical paper \cite{J77}; for a more modern reference, see \cite{W11}*{Section 9.2}.

\begin{lemma}[Inversion and circular degree]\label{lem:inversion}
Let $C_k$ be a curve of circular degree $k$. Then:
\begin{enumerate}[label=\rom]
\item 
The inverse of $C_1$ in a point on $C_1$ is a line; the inverse of $C_1$ in a point not on $C_1$ is a circle.
\item\label{inversion-case2}
The inverse of $C_2$ in a singular point on $C_2$ is a non-circular conic; the inverse of $C_2$ in a regular point on $C_2$ is a circular cubic; the inverse of $C_2$ in a point not on $C_2$ is a bicircular quartic.
\item\label{inversion-case3}
The inverse of $C_3$ in a singularity of multiplicity three is a non-circular cubic; 
the inverse of $C_3$ in a singularity of multiplicity two is a circular quartic; 
the inverse of $C_3$ in a regular point on $C_3$ is a $2$-circular quintic; the inverse of $C_3$ in a point not on $C_3$ is a $3$-circular sextic.
\end{enumerate}
\end{lemma}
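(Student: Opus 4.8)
The plan is to reduce everything to a single algebraic fact: if $\gamma$ is a curve of degree $d$ that is $m$-circular (with $m=0$ meaning "not circular", $m\ge 1$ meaning it passes through $\alpha,\beta$ with multiplicity $\ge m$), and $p$ is a point with multiplicity $\mu$ on $\gamma$ (with $\mu=0$ if $p\notin\gamma$), then the inverse $\gamma'=I_p(\gamma)$ has degree $2d-2m-\mu$ and is $(d-m-\mu)$-circular (at least), provided $p$ is a real affine point and we are in sufficiently general position. The reason is that inversion in $p$ can be realised as a quadratic Cremona transformation whose three base points are $p$, $\alpha$, and $\beta$: in suitable coordinates centred at $p$, $I_p$ is $(x,y)\mapsto (x/(x^2+y^2),\,y/(x^2+y^2))$, which in homogeneous coordinates is the standard quadratic transformation with base locus $\{p,\alpha,\beta\}$ after a linear change sending these to the three coordinate vertices. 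A classical formula (see \cite{W11}*{Theorem 4.14} for the degree bound, and \cite{J77} or \cite{W11}*{Section 9.2}) then tells us how multiplicities at the three base points transform: the degree drops by the sum of the multiplicities at the base points, and the multiplicity at the image of each base point becomes (old degree) minus (sum of multiplicities at the \emph{other two} base points). Since $\alpha$ and $\beta$ are fixed by $I_p$ (it is a real map commuting with complex conjugation, and it fixes the line at infinity setwise with these as the only fixed points there when $p$ is affine — more precisely, $\alpha,\beta$ are fixed because they are two of the three base points of the Cremona map and the map is an involution), the $m$-circularity of $\gamma$ translates into $(d-m-\mu)$-circularity of $\gamma'$, while $p=I_p(p)$ is sent off to infinity, contributing to the residual.

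With that formula in hand the three cases are pure bookkeeping. For \itm{i}: a curve $C_1$ of circular degree one is a generalised circle, so it sits inside a $1$-circular conic ($d=2$, $m=1$). If $p\in C_1$ then $\mu=1$, so $\gamma'$ has degree $2\cdot 2-2\cdot 1-1=1$ and $(2-1-1)=0$-circularity, i.e.\ a line; if $p\notin C_1$ then $\mu=0$, giving degree $2$ and $1$-circularity, i.e.\ a circle (this is of course classical). For \itm{ii}: $C_2$ has circular degree two, so by Definition \ref{def:circular} it lies in a $2$-circular quartic ($d=4$, $m=2$). A singular point of $C_2$ has $\mu=2$: degree becomes $2\cdot 4-2\cdot 2-2=2$ with $0$-circularity — a non-circular conic (it cannot be a circle, else its circular degree would be one). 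A regular point has $\mu=1$: degree $3$, $1$-circularity — a circular cubic. A point off $C_2$ has $\mu=0$: degree $4$, $2$-circularity — a bicircular quartic. For \itm{iii}: $C_3$ has circular degree three, hence lies in a $3$-circular sextic ($d=6$, $m=3$); the four sub-cases $\mu=3,2,1,0$ give degrees $6-6-\mu = 12-6-2\cdot3-\mu$… concretely $2d-2m-\mu = 6-\mu$ equals $3,4,5,6$ and circularities $d-m-\mu=3-\mu$ equal $0,1,2,3$, yielding a non-circular cubic, a circular quartic, a $2$-circular quintic, and a $3$-circular sextic respectively. In each case one should add a sentence ruling out accidental drops in circular degree (e.g.\ that the "non-circular" conic in \itm{ii} is genuinely not a circle, which follows from circular degree being an \emph{invariant} — a point we may take from the cited sources, or argue by noting the inverse of the inverse recovers $C_2$, so the circular degree cannot go down without contradicting the case we just did in reverse).

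The main subtlety — and where I would be most careful — is that the clean formula above is stated for the Cremona map acting on curves in "general position" with respect to the base points, whereas here two of the base points, $\alpha$ and $\beta$, are precisely the points at which $\gamma$ is forced to be singular, and the third base point $p$ is a chosen point of various multiplicities. So one must check that no further unexpected base points or infinitely near conditions intervene; equivalently, one checks that $\gamma$ does not contain (a component of) the exceptional lines of the Cremona map, i.e.\ does not contain the line at infinity $z=0$ nor the two lines $\{x=\pm i y\}$ through $p$ and a circular point. If $\gamma$ did contain such a component we would first strip it off, apply the lemma to the residual curve, and observe that inversion maps each of these three special lines to the opposite base point's exceptional line (so line at infinity $\leftrightarrow$ point $p$, and each isotropic line through $p$ to itself). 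This case analysis is exactly the content of the "more careful" statement promised in the sentence "Our proof actually gives a more precise list of possibilities"; for the purposes of this lemma it suffices to note that the curves $C_1, C_2, C_3$ arising later never contain the line at infinity as a component (we always work with affine curves and their projective closures), so the generic formula applies verbatim, and the remaining isotropic-line degeneracies only move a component among the three special lines without changing degree or circular degree. I would therefore present the proof as: (1) recall the Cremona/multiplicity transformation formula, citing \cite{W11}*{Theorem 4.14} and \cite{J77}; (2) derive the master identity $\deg\gamma' = 2d-2m-\mu$, $\operatorname{circ}(\gamma')\ge d-m-\mu$; (3) dispatch the three cases by arithmetic; (4) upgrade the circularity inequalities to equalities by invoking the invariance of circular degree under the involution $I_p$.
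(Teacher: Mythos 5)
The paper does not actually prove this lemma: it is stated with a pointer to the classical literature (Johnson \cite{J77} and Werner \cite{W11}*{Section 9.2}), so there is no internal proof to compare against. Your argument is, in substance, the standard proof found in those sources, and it is correct: inversion in $p$ extends to a quadratic Cremona involution of $\CC\PP^2$ with base points $p,\alpha,\beta$, and for a curve of degree $d$, circularity $m$ and multiplicity $\mu$ at $p$ (containing none of the three fundamental lines) the strict transform has degree $2d-2m-\mu$ and multiplicity $d-m-\mu$ at each of $\alpha,\beta$; the case analysis is then arithmetic, and your involution trick ($I_p\circ I_p=\mathrm{id}$) legitimately upgrades the circularity bound to equality, i.e.\ invariance of circular degree. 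Two small points. First, your bookkeeping is done on the containing $k$-circular curve of degree $2k$ from Definition~\ref{def:circular}; but whenever $\deg C_k<2k$ that containing curve has the line at infinity as a component, which is exactly the degenerate situation you then have to excuse. It is cleaner to apply the transformation formula to $C_k$ itself, writing it as an $m$-circular curve of degree $k+m$ for some $0\le m\le k$: the image then has degree $2k-\mu$ and circularity $k-\mu$, giving all cases of (i)--(iii) uniformly without ever touching a fundamental component. Second, in your degeneracy discussion the isotropic lines through $p$ are not mapped ``to themselves'': each is contracted to its circular point at infinity (and the line at infinity to $p$), as one sees from $[x:y:z]\mapsto[xz:yz:x^2+y^2]$; this does not affect your counts, since a real curve has equal multiplicities at $\alpha$ and $\beta$, but the statement as written is inaccurate. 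With these repairs the proposal is a complete and correct proof, somewhat more self-contained than the paper, which simply defers to the cited references.
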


One particular subcase of Case~\ref{inversion-case2} will play an important role in our paper, and we state it separately in Proposition \ref{prop:ellipseandacnodal}.
A proof can be found in \cite{H20}*{p. 202}.
Let us recall that an \emph{acnodal cubic} is a singular cubic with a singularity that is an isolated point; 
for example, $(2x-1)(x^2+y^2)-y^2=0$ is an acnodal circular cubic with a singularity at the origin.

\begin{prop}\label{prop:ellipseandacnodal}
The inverse of an ellipse in a point on the ellipse is an acnodal circular cubic with the centre of inversion as its singularity; the inverse of an acnodal circular cubic in its singularity is an ellipse through the singularity.
\end{prop}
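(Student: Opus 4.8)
\section*{Proof plan for Proposition \ref{prop:ellipseandacnodal}}

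The plan is to establish both implications by a single explicit computation after reducing to a normal form. Since $I_{p,r}=T_p\circ I_{0,r}\circ T_p^{-1}$ with $T_p$ the translation by $p$, and since translations fix the line at infinity pointwise — hence preserve the classes ``ellipse'' and ``circular cubic'' and carry an acnode to an acnode — we may take $p$ to be the origin; and as $I_{0,r}$ differs from $I_0=I_{0,1}$ only by a dilatation about the origin, it suffices to work with $I_0\colon (x,y)\mapsto(x,y)/(x^2+y^2)$. This map is an involution, so the inverse of a plane curve $\{g=0\}$ is obtained by substituting $x\mapsto x/(x^2+y^2)$ and $y\mapsto y/(x^2+y^2)$ into $g$ and clearing denominators.

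For the first statement, write the ellipse through the origin as $ax^2+bxy+cy^2+dx+ey=0$ with $b^2-4ac<0$; here $(d,e)\neq(0,0)$, since otherwise the leading form being definite would make the conic the single point $(0,0)$. The substitution, after multiplying by $(x^2+y^2)^2$, produces
\[(dx+ey)(x^2+y^2)+ax^2+bxy+cy^2=0,\]
which is of the form \eqref{eq:circular-cubic} and so defines a circular cubic, with a genuine cubic part because $(d,e)\neq(0,0)$; it is irreducible, being the inverse of an irreducible conic. A one-line computation shows both partial derivatives vanish at the origin, so the centre of inversion is the (necessarily unique) singular point of this cubic, and since the lowest-degree part of the equation there is the definite form $ax^2+bxy+cy^2$, the origin is an isolated point of the real curve, i.e.\ the singularity is an acnode.

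For the converse, take an irreducible acnodal circular cubic and place its acnode at the origin. By \eqref{eq:circular-cubic} its equation is $(ux+vy)(x^2+y^2)+q(x,y,1)=0$ with $q$ a non-trivial quadratic, where $(u,v)\neq(0,0)$ (otherwise the curve is $q(x,y,z)z=0$, reducible); the conditions that the origin lie on the curve and be singular force the constant and linear terms of $q(x,y,1)$ to vanish, so the equation reads $(ux+vy)(x^2+y^2)+ax^2+bxy+cy^2=0$, and the acnode condition forces $b^2-4ac<0$. Applying $I_0$ reverses the earlier computation: substituting and clearing the factor $(x^2+y^2)^2$ gives
\[ax^2+bxy+cy^2+ux+vy=0,\]
a conic through the origin with definite leading form. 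Being irreducible of odd degree the cubic has infinitely many real points, so this conic is neither empty nor a single point; hence it is an ellipse, and it passes through the singularity.

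Every step is elementary, so I do not expect a genuine obstacle; the points to keep straight are the bookkeeping of which monomials survive the substitution, the two equivalences ``definite quadratic part $\Leftrightarrow$ acnode'' and ``definite leading form $\Leftrightarrow$ ellipse'', and the need to restrict the converse to irreducible cubics — for a reducible acnodal circular cubic, namely a line together with an imaginary point-conic off that line, one computes that the inverse at the acnode is a circle rather than an ellipse. In effect, Proposition \ref{prop:ellipseandacnodal} is the normalised degree-two case of the circular-degree calculus behind Lemma \ref{lem:inversion}\ref{inversion-case2}.
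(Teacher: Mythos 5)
Your computation is correct, but note that the paper does not actually prove Proposition~\ref{prop:ellipseandacnodal}: it cites Hilton \cite{H20}*{p.~202} and records a single example, so your explicit normal-form calculation is a self-contained substitute for a classical reference rather than a variant of an argument in the text. The route you take is the natural one, and the main steps check out: reducing to $I_0$ by translation and dilatation, the bookkeeping of the substitution, the acnode criterion via a definite tangent cone (for an irreducible cubic the degenerate cases of the quadratic part give a node or cusp, hence a real branch, so ``isolated point'' does force definiteness), and the odd-degree argument guaranteeing the image conic has infinitely many real points, hence is a genuine ellipse through the centre of inversion. Your closing remark that a reducible ``acnodal'' circular cubic (a real line together with a conjugate isotropic line pair) inverts to a circle is also correct and rightly justifies restricting the converse to irreducible cubics.

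One point should be made explicit. After clearing denominators, the cubic $(dx+ey)(x^2+y^2)+ax^2+bxy+cy^2$ is the inverse curve only if no extraneous factor of $x^2+y^2$ has been introduced, and this happens precisely when $ax^2+bxy+cy^2$ is not proportional to $x^2+y^2$, i.e.\ when the conic is not a circle. In the circle case the displayed cubic degenerates to $(x^2+y^2)(dx+ey+a)$, the true inverse is a line, and your irreducibility claim (``being the inverse of an irreducible conic'') would be applied to the wrong curve. So the proof implicitly uses the paper's convention that ``ellipse'' excludes circles; you should either say so, or verify directly (a short factorisation check against the decomposition into homogeneous parts) that the displayed cubic is irreducible exactly when the quadratic part is not proportional to $x^2+y^2$. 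Reassuringly, the same dichotomy appears in your converse: irreducibility of the cubic forces $q(x,y,1)$'s quadratic part not to be proportional to $x^2+y^2$, so the image conic is automatically a non-circular ellipse, consistent with the forward direction and with the classification behind Lemma~\ref{lem:inversion}\ref{inversion-case2}.
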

For example, the inverse of the cubic $(2x-1)(x^2+y^2)-y^2=0$ in its singularity at the origin is the ellipse $(x-1)^2+2y^2=1$.

\section{Groups on circular curves}\label{sec:groups}

\subsection{Groups on irreducible circular cubics}\label{ssec:irreducible}

The extremal configurations in our main theorems are all based on group laws on certain circular curves. 
It is well-known that irreducible smooth cubics (elliptic curves) have a group law (see for instance \cite{ST92}).
These groups play a crucial role in the work of Green and Tao \cite{GT13}.
The reason that these groups are relevant to ordinary lines is the following collinearity property of this group (when defined in the standard way).
Three points on the curve are collinear if and only if in the group they sum to the identity element. 
For this property to hold, the identity element must be an inflection point.
Here we will define a group in a slightly different way (described for instance in \cite{ST92}*{Section 1.2}), in which the identity element is not necessarily an inflection point, and the same collinearity property does not hold. 
However, for circular cubics, we show that we can choose the identity element so that we get a similar property for concyclicity.

First let $\gamma$ be any irreducible cubic, write $\gamma^*$ for its set of regular points, 
and pick an arbitrary point $o\in \gamma^*$.
We describe an additive group operation $\oplus$ on the set $\gamma^*$ for which $o$ is the identity element.
The construction is depicted in Figure~\ref{fig:cir_elliptic}.
\begin{figure}
\centering
\begin{overpic}[scale=1.0]{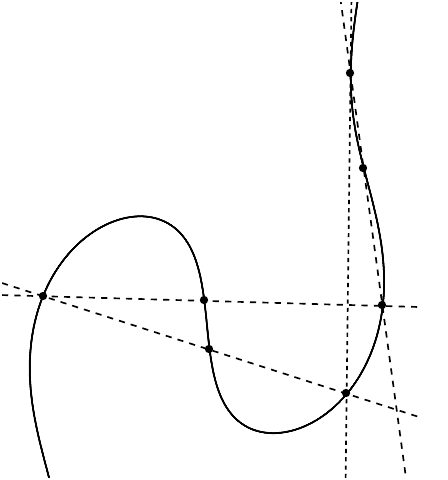}
\put(75,84){$o$}
\put(6.5,40.5){$a$}
\put(44,39){$b$}
\put(81.5,37.5){$a*b$}
\put(77.8,65){$a \oplus b$}
\put(73.5,13.4){$\omega$}
\put(44.7,28.2){$\ominus a$}
\end{overpic}
\caption{Group law on a smooth circular cubic curve}\label{fig:cir_elliptic}
\end{figure}
Given $a,b\in\gamma^*$, 
let $a*b$ be the third intersection point of $\gamma$ and the line $ab$,
and define $a \oplus b$ to be $(a*b)*o$, the third intersection point of $\gamma$ and the line through $a*b$ and $o$. 
When $a=b$, the line $ab$ should be interpreted as the tangent line at $a$; when $a*b = o$, the line through $a*b$ and $o$ should be interpreted as the tangent line to $\gamma$ at $o$.
We refer to \cite{ST92} for a more careful definition and a proof that this operation really does give a group.

Now consider a circular cubic $\gamma$.
Since the circular points $\alpha$ and $\beta$ lying on it are conjugate, 
$\gamma$ has a unique real point on the line at infinity, which we choose as our identity element $o$.
We define the point $\omega$ to be the third intersection point of the tangent line to $\gamma$ at $o$ (if there is no third intersection point, then $o$ is an inflection point, and we consider $o$ itself to be the third point).
Throughout this paper we will use $\omega$ to denote this special point on a circular cubic; note that $\omega$ is not fixed like $\alpha$ and $\beta$, but depends on $\gamma$.
Also note that $\omega$ is real, since it corresponds to the third root of a real cubic polynomial whose other two roots correspond to the real point $o$.
Observe that 
\[\omega = \alpha \oplus \beta,\]
since $\alpha*\beta = o$, and by definition $o*o = \omega$.

With this group law, we no longer have the property that three points are collinear if and only if they sum to $o$ (unless $o$ happens to be an inflection point).
Nevertheless, 
one can check that three points $a,b,c\in\gamma^*$ are collinear if and only if $a\oplus b\oplus c=\omega$.
More important for us, 
four points of $\gamma^*$ lie on a generalised circle if and only if they sum to $\omega$.
This amounts to a classical fact (see \cite{B01}*{Article 225} for an equivalent statement), but we include a proof for completeness.
We use the following version of the Cayley-Bacharach Theorem, due to Chasles (see \cite{EGH96}).

\begin{theorem}[Chasles]\label{thm:cb}
Suppose two cubic curves in $\CC \PP^2$ with no common component intersect in nine points, counting multiplicities. 
If $\gamma$ is another cubic curve containing eight of these intersection points, counting multiplicities, then $\gamma$ also contains the ninth.
\end{theorem}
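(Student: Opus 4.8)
The plan is to deduce the theorem from the single fact that any eight of the nine intersection points impose \emph{independent conditions} on cubic curves. Recall that the cubic forms on $\CC\PP^2$ form a $10$-dimensional vector space $\mathcal{C}$, and that requiring a cubic form to vanish at a given point is one linear condition on $\mathcal{C}$. Let $F$ and $G$ be cubic forms cutting out the two given cubics; since these curves have no common component, $F$ and $G$ are not proportional, so they span a $2$-dimensional subspace of $\mathcal{C}$, every member of which vanishes at all nine intersection points $p_1,\dots,p_9$. Suppose we knew that $p_1,\dots,p_8$ impose independent conditions, that is, that the space of cubic forms vanishing at these eight points has dimension exactly $10-8=2$. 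Since this space contains $F$ and $G$, it must equal their span; in particular the cubic form defining $\gamma$ is a linear combination $aF+bG$, and hence vanishes at $p_9$ as well, which is what we want. So it suffices to prove that $p_1,\dots,p_8$ impose independent conditions on cubics.

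For this I would invoke the classical fact that a set of at most eight points in the projective plane fails to impose independent conditions on cubics only if either five of the points are collinear, or there are exactly eight points and all of them lie on a conic (this is proved by an elementary case analysis together with the dimension counts for cubics through points on a fixed line or conic; see \cite{EGH96}). Both exceptional configurations are incompatible with our hypothesis, because by B\'ezout's theorem a cubic curve meeting a line in at least four points, or an irreducible conic in at least seven points, must have it as a component. If five of the $p_i$ were collinear on a line $\ell$, then $\ell$ would be a component of both $F$ and $G$, contradicting the absence of a common component. If all eight $p_i$ lay on an irreducible conic $Q$, then $Q$ would be a common component of $F$ and $G$, again a contradiction. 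And if the eight points lay on a reducible conic $\ell_1\cup\ell_2$, then by pigeonhole at least four of them would lie on one line $\ell_j$, forcing $\ell_j$ to be a common component of $F$ and $G$ — a contradiction once more. Hence neither exception occurs, so $p_1,\dots,p_8$ impose independent conditions, and the theorem follows from the first paragraph.

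Two points need care. First, the argument above is phrased for nine \emph{distinct} intersection points; to obtain the statement with multiplicities one should work scheme-theoretically with the length-nine complete intersection $V(F)\cap V(G)$, or perturb $F$ and $G$ inside $\mathcal{C}$ to a nearby pair with nine simple intersection points and pass to a limit while carefully tracking the multiplicities. Second, if one wants a self-contained argument rather than a citation, the real content is the classical independence lemma used in the second paragraph. I expect that lemma — more precisely, the bookkeeping in its case analysis — to be the main obstacle; the rest is linear algebra together with a few applications of B\'ezout's theorem.
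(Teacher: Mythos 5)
Your argument is the standard one behind this classical statement, and for nine \emph{distinct} intersection points it is complete and correct: cubic forms on $\CC\PP^2$ form a ten-dimensional space, the eight points impose independent conditions by the classical lemma (whose exceptional configurations --- five collinear points, or eight points on a conic --- you rule out via B\'ezout and the no-common-component hypothesis), so the cubics through the eight points are exactly the pencil spanned by $F$ and $G$, and $\gamma$, being in that pencil, passes through the ninth point. Note that the paper offers no proof of Theorem~\ref{thm:cb} at all; it simply cites \cite{EGH96}, and your proof is essentially the argument underlying that citation in the reduced case, so there is no divergence of method to compare.

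The one genuine gap is the multiplicity case, which you flag but do not close --- and it is precisely the case this paper relies on. In the proof of Proposition~\ref{prop:cir_elliptic} the nine intersection points are allowed to collide (tangent lines when $a=b$, tangent circles, repeated appearances of $o$, $\alpha$, $\beta$), and the paper explicitly counts all intersections with multiplicity. To make your argument cover this, ``containing eight of the intersection points, counting multiplicities'' has to be read scheme-theoretically: you need that every length-eight subscheme of the length-nine complete intersection $V(F)\cap V(G)$ imposes independent conditions on cubics, which requires upgrading the exceptional-configuration lemma to non-reduced subschemes (a subscheme of length at least four contained in a line forces that line to be a common component, a subscheme of length at least seven contained in an irreducible conic forces that conic to be a common component, and so on), or else carrying out your proposed perturbation-and-limit argument with genuine attention to semicontinuity of the conditions imposed. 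None of this is fatal --- the B\'ezout exclusions go through verbatim for non-reduced subschemes --- but as written your proof establishes only the reduced case, not the statement in the form the paper actually uses.
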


Recall from Section~\ref{sec:circularcurves} that a generalised circle, viewed projectively, is either a circle, or the union a line with the line at infinity.

\begin{prop}\label{prop:cir_elliptic}
Let $\gamma$ be an irreducible circular cubic in $\RR\PP^2$,
and let $a,b,c,d\in \gamma^*$ be points that are not necessarily distinct.
A generalised circle intersects $\gamma$ in the points $a,b,c,d$ (taking into account multiplicity) if and only if $a\oplus b\oplus c\oplus d=\omega$.
\end{prop}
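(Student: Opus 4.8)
The plan is to use the Chasles form of the Cayley--Bacharach theorem (Theorem~\ref{thm:cb}) to compare the circular cubic $\gamma$ with a reducible cubic that encodes the generalised circle through $a,b,c,d$. The key point is that a generalised circle is a conic through $\alpha$ and $\beta$, so I can turn a conic into a cubic by adjoining the line at infinity $L_\infty$ (or, in the straight-line case, the generalised circle \emph{already} contains $L_\infty$ as a component). Since $\alpha,\beta$ lie on $\gamma$ and $o$ lies on $\gamma\cap L_\infty$, the line at infinity meets $\gamma$ exactly in $\{\alpha,\beta,o\}$; this is the bookkeeping that forces $\omega$ (rather than $o$) to appear in the group-sum condition.

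First I would prove the forward direction. Suppose a generalised circle $\sigma$ meets $\gamma$ in $a,b,c,d$ with multiplicity. Let $K$ be the cubic $\sigma\cup L_\infty$ (a conic plus the line at infinity; if $\sigma$ is a line-plus-$L_\infty$ already, take $K=\sigma$ together with one more suitable line, or argue directly). Then $\gamma$ and $K$ are cubics with no common component — here I must note $\gamma$ is irreducible and not equal to $L_\infty$, and $\gamma\neq\sigma$ since $\sigma$ has degree $2$ — so by Bézout they meet in nine points counted with multiplicity. Eight of these nine are accounted for: $a,b,c,d$ on $\sigma$, and $\alpha,\beta,o$ on $L_\infty$ — that is only seven, so one more intersection point of $\gamma$ with $L_\infty$ or with $\sigma$ must be identified. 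This is the delicate count: $\gamma\cap L_\infty$ has exactly three points $\alpha,\beta,o$ (total multiplicity three), and $\gamma\cap\sigma$ has total multiplicity six, of which four are $a,b,c,d$; the remaining two points of $\gamma\cap\sigma$ are forced to be $\alpha$ and $\beta$ again, because every circle passes through the circular points and $\alpha,\beta\in\gamma$. So $\gamma\cap K$ consists of $a,b,c,d,o$ together with $\alpha$ and $\beta$ each counted with multiplicity two — nine points in all. Now I exhibit a second cubic through eight of them and invoke Chasles to locate the ninth.

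Concretely, let $t$ be the point on $\gamma$ with $a\oplus b\oplus c\oplus d = t$; I want to show $t=\omega$. Build a chain of lines: let $e=a*b$, $f=c*d$, so the line $ab$ meets $\gamma$ in $\{a,b,e\}$ and $cd$ in $\{c,d,f\}$; let $g = e*f$, so $ef$ meets $\gamma$ in $\{e,f,g\}$. Tracking the group law, $a\oplus b\oplus c\oplus d = \omega$ is equivalent to $g$ lying on the line through $o$ and $\omega$ in the appropriate sense, which one unwinds using $\alpha\oplus\beta=\omega$ and $\alpha*\beta=o$. The cleanest route is to apply Chasles with the reducible cubic $(ab)\cup(cd)\cup(og)$ or $(ab)\cup(cd)\cup(ef)$ against $\gamma$ and against $K$, chosen so that eight of the nine base points are shared and the ninth pins down exactly the concyclicity-versus-group-sum equivalence. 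For the converse, I run the same Cayley--Bacharach argument in reverse: given $a\oplus b\oplus c\oplus d=\omega$, the conic through $a,b,c,\alpha,\beta$ (five points determine a conic) is a generalised circle, and Chasles forces it to pass through $d$ as well.

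The main obstacle is the multiplicity bookkeeping: handling the degenerate cases (coincidences among $a,b,c,d$, tangency of $\sigma$ to $\gamma$, the possibility $o\in\{a,b,c,d\}$, and the split between $\sigma$ a genuine circle versus $\sigma$ a line together with $L_\infty$) so that "intersection counted with multiplicity" is applied consistently, and verifying that the ninth intersection point of $\gamma$ with a conic through the circular points is never "wasted" on $\alpha$ or $\beta$ beyond multiplicity two. Once the count $\gamma\cap K = \{a,b,c,d,o\} \cup \{2\alpha,2\beta\}$ is established rigorously, the equivalence with $a\oplus b\oplus c\oplus d=\omega$ is a formal consequence of the definition of $\oplus$ (iterated "third intersection point" through $o$) combined with $\omega = \alpha\oplus\beta$ and the collinearity criterion $a\oplus b\oplus c = \omega$.
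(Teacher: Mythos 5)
Your overall strategy is the same as the paper's (Chasles' form of Cayley--Bacharach, with the line at infinity used to account for $\alpha$, $\beta$ and $o$, plus the group law, and the converse handled by the ``fourth intersection point'' trick), and your count of $\gamma\cap(\sigma\cup\ell_\infty)$ as $a,b,c,d,o$ together with $\alpha,\beta$ doubled is correct. But the heart of the argument is missing: you never exhibit a second cubic passing through eight of those nine points, and the candidates you do name fail. The cubic $(ab)\cup(cd)\cup(ef)$ (with $e=a*b$, $f=c*d$) does not in general contain $o$, $\alpha$ or $\beta$, so it shares only $a,b,c,d$ with $\gamma\cap(\sigma\cup\ell_\infty)$; the variant $(ab)\cup(cd)\cup(og)$ fares no better, and if instead you intersect these line-triples with $\gamma$ itself, the resulting nine points include $e$, $f$, $g$, $o*g$, which need not lie on $\sigma\cup\ell_\infty$. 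So Theorem~\ref{thm:cb} cannot be invoked ``chosen so that eight of the nine base points are shared'' --- choosing such a configuration \emph{is} the nontrivial step, and it is exactly what your write-up defers. Relatedly, your claimed translation of $a\oplus b\oplus c\oplus d=\omega$ into ``$g$ lies on the line through $o$ and $\omega$'' is off: with the collinearity criterion $x\oplus y\oplus z=\omega$ one gets $g=(a\oplus b\oplus c\oplus d)\ominus\omega$, so the condition is $g=o$, i.e.\ that $o$, $a*b$, $c*d$ are collinear; and deducing \emph{that} collinearity from concyclicity again requires a correctly chosen Cayley--Bacharach configuration, so the argument is circular as it stands.

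For comparison, the paper resolves this by building the auxiliary curves around points that are guaranteed to lie on $\gamma$: it takes $\gamma_1=\sigma\cup\ell_1$ and $\gamma_2=\ell_2\cup\ell_3\cup\ell_\infty$, where $\ell_1$ is the line through $o$ and $a*b$ (hence through $a\oplus b$), $\ell_2=ab$, and $\ell_3$ is the line through $c$ and $a\oplus b$. Their nine intersection points are $a$, $b$, $c$, $a*b$, $a\oplus b$, $o$, $\alpha$, $\beta$ and one residual point $d'$ on $\sigma\cap\ell_3$; the first eight visibly lie on $\gamma$, so Chasles places $d'$ on $\gamma$, forcing $d'=(a\oplus b)*c$ and simultaneously $d'=d$ (the sixth point of $\gamma\cap\sigma$), after which a short group computation gives $a\oplus b\oplus c\oplus d=\omega$. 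If you want to salvage your ``chords'' picture ($a*b$, $c*d$, $\alpha*\beta=o$ collinear), you would need an analogous explicit configuration proving the six-points-on-a-conic statement; as written, your proposal has a genuine gap at precisely that point. Your converse direction is fine and matches the paper's.
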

\begin{proof}
We consider the cubic $\gamma$ extended to $\CC\PP^2$.
We first show the forward direction.
All statements in the proof should be considered with multiplicity.

If the generalised circle is the union of a line $\ell$ and the line at infinity $\ell_\infty$, 
then $\ell\cup \ell_\infty$ intersects $\gamma$ in $a,b,c,d,\alpha,\beta$.
Since $\ell$ intersects $\gamma$ in at most three points,
one of the points $a,b,c,d$ must equal $o$, say $d=o$.
Since $\ell_\infty$ also intersects $\gamma$ in at most three points, we must have $a,b,c\in\ell$.
Thus $a,b,c$ are collinear, and we have $a\oplus b\oplus c = \omega$, by the definition of the group law.
It then follows from $d=o$ that $a\oplus b\oplus c\oplus d=\omega$.

Suppose next that the generalised circle is a circle $\sigma$, 
and intersects $\gamma$ in $a,b,c,d,\alpha,\beta$.
The construction that follows is depicted in Figure~\ref{fig:cb}.
\begin{figure}
\centering
\begin{overpic}[scale=1.0]{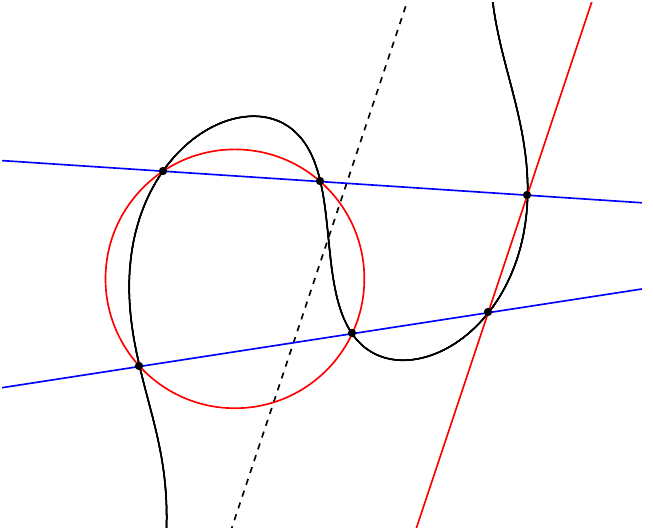}
\put(48,78){{\scriptsize asymptote}}
\put(23,56){$a$}
\put(50,54.5){$b$}
\put(55.5,31.2){$c$}
\put(15,45){$\sigma$}
\put(1,58){$\l_2$}
\put(82.2,52){$a*b$}
\put(86,78){$\l_1$}
\put(75.1,30.5){$a \oplus b$}
\put(1,23.5){$\l_3$}
\put(21.4,27.1){$d'=d$}
\end{overpic}
\caption{Concyclicity of four regular points on a circular cubic}\label{fig:cb}
\end{figure}
Let $\l_1$ be the line through $o$ and $a*b$ (and thus through $a \oplus b$),  $\l_2$ the line through $a$ and $b$ (and thus through $a*b$), and $\l_3$ the line through $c$ and $a \oplus b$. 
Note that $\sigma$ and $\l_\infty$ intersect in $\alpha$ and $\beta$.
Then $\gamma_1 = \sigma \cup \l_1$ and $\gamma_2 = \l_2 \cup \l_3\cup \l_\infty$ are two cubic curves that intersect in nine points, of which the eight points $a$, $b$, $c$, $a*b$, $a \oplus b$, $o$, $\alpha$, and $\beta$ certainly lie on $\gamma$; the remaining point is the third intersection point of $\gamma_1$ and $\ell_3$ beside $c$ and $a\oplus b$, which we denote by $d'$.
By Theorem~\ref{thm:cb}, $\gamma$ contains $d'$. 
By the group law on $\gamma$, 
we have $d' = (a \oplus b)*c$.
Moreover, $d'$ must be the sixth intersection point of $\gamma$ and $\sigma$ beside $a,b,c,\alpha,\beta$, which is $d$,
so $d = d' =(a \oplus b)*c$.
By the definition of the group law, this implies $a\oplus b\oplus c = o*d$,
so $(a\oplus b\oplus c)* d = (o*d)*d = o$,
and finally $a\oplus b\oplus c\oplus d = o*o = \omega$.

For the converse, suppose that $a\oplus b\oplus c\oplus d=\omega$, and let $d'$ be the fourth point where the generalised circle $\sigma$ through $a,b,c$ intersects $\gamma$.
Then, by what we have just shown, $a\oplus b\oplus c\oplus d'=\omega$, and it follows that $d=d'$, and $a,b,c,d$ lie on $\sigma$.
\end{proof}

This proposition is a consequence of the more general fact that six points on a circular cubic lie on a conic if and only if they sum to $2\omega$.
(In the standard group structure on a cubic, where the identity $o$ is chosen as an inflection point, they would sum to $o$; see \cite{W78}*{Theorem 9.2}.)
Since a generalised circle in $\RR\PP^2$ is a conic containing $\alpha$ and $\beta$,
and $\alpha\oplus \beta = \omega$, 
it follows that four points $a,b,c,d$ (possibly including $o$) lie on a generalised circle if and only if they sum to $\omega$.

\subsection{Groups on other circular curves}
\label{sec:groupsonother}

We now define group laws on two other types of curves of circular degree two, 
and observe that they satisfy similar concyclicity properties.
Let us note at this point that most bicircular quartics can also be given a group structure (if an irreducible bicircular quartic has no singularities besides $\alpha$ and $\beta$, then it is a curve of genus one, and thus has a group law by \cite{S09}*{Section III.3}). 
However, in our proofs we will handle bicircular quartics by inverting in a point on the curve,
which by Lemma \ref{lem:inversion} transforms a bicircular quartic into a circular cubic.
For that reason, we do not need to study the group law on bicircular quartics separately.

\paragraph{Ellipses.}
We discuss a group law on ellipses, 
although we do not actually need it in our proof, 
because inversion lets us transform an ellipse into an acnodal cubic (Proposition \ref{prop:ellipseandacnodal}), which we have already given a group structure in the previous subsection.
Nevertheless, we treat the group law on ellipses here because it is especially elementary, and it would be strange not to mention it.

Consider the ellipse $\sigma$ given by the equation $x^2+ (y/s)^2=1$, with $s\neq 0,1$.
For any point $a\in \sigma$, we project $a$ vertically to the point $a'$ on the unit circle around the origin, 
as in Figure~\ref{fig:ellipse}, and call the angle $\theta_a$ the \emph{eccentric angle} of $a$.
We define the sum of two points $a,b\in \sigma$ to be the point $c=a\oplus b$ whose eccentric angle is $\theta_c =  \theta_a+\theta_b$.
This gives $\sigma$ a group structure isomorphic to $\RR/\ZZ$.
The identity element is $o=(1,0)$, and the inverse of a point is its reflection in the $x$-axis.
We have the following classical fact that describes when four points on an ellipse are concyclic (see \cite{J48} for the oldest reference we could find, and \cite{BPBS84}*{Problem 17.2} for two detailed proofs).

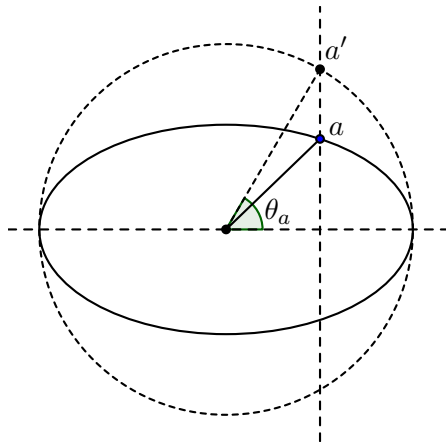
\begin{figure}
\centering
\definecolor{qqwuqq}{rgb}{0.,0.39215686274509803,0.}
\definecolor{qqqqff}{rgb}{0.,0.,1.}
\begin{tikzpicture}[line cap=round,line join=round,>=triangle 45,thick,x=1.0cm,y=1.0cm,scale=1.2]
\clip(-1.5,0) rectangle (3.3,4.8);
\draw [shift={(0.89,2.34)},color=qqwuqq,fill=qqwuqq,fill opacity=0.1] (0,0) -- (0.:0.4) arc (0.:59.807322877532016:0.4) -- cycle;
\draw [rotate around={0.:(0.89,2.34)}] (0.89,2.34) ellipse (2.0480822869644038cm and 1.1569533500436997cm);
\draw [dash pattern=on 3pt off 3pt,domain=-1.5:3.3] plot(\x,{(--7.9092-0.*\x)/3.38});
\draw [dash pattern=on 2pt off 2pt] (0.89,2.34) circle (2.0480822869644038cm);
\draw (0.89,2.34)-- (1.92,3.34);
\draw [dash pattern=on 3pt off 3pt] (1.92,0) -- (1.92,4.8);
\draw [dash pattern=on 2pt off 2pt] (0.89,2.34)-- (1.92,4.110237569982443);
\draw [fill=qqqqff] (1.92,3.34) circle (1.2pt);
\draw[color=black] (2.1,3.45) node {$a$};
\draw [fill=black] (0.89,2.34) circle (1.2pt);
\draw [fill=black] (1.92,4.110237569982443) circle (1.2pt);
\draw[color=black] (2.1,4.35) node {$a'$};
\draw[color=black] (1.46,2.54) node {$\theta_a$};
\end{tikzpicture}
\caption{Eccentric angle of a point on an ellipse}\label{fig:ellipse}
\end{figure}

\begin{prop}\label{prop:ellipse}
Four points $a,b,c,d\in \sigma$ are concyclic if and only if $a\oplus b\oplus c\oplus d =o$.
We may allow two of the points to be equal, in which case the circle through the three distinct points is tangent to the ellipse at the repeated point.
\end{prop}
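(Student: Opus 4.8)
The plan is to use the eccentric-angle parametrisation to turn concyclicity into a statement about the roots of a single univariate polynomial, and then read off the sum condition from Vieta's formulas. Write a point of $\sigma=\{x^2+(y/s)^2=1\}$ as $p(\theta)=(\cos\theta,s\sin\theta)$, so that the group law becomes $p(\theta_1)\oplus p(\theta_2)=p(\theta_1+\theta_2)$, the identity is $p(0)$, and the condition $a\oplus b\oplus c\oplus d=o$ is equivalent to $e^{i\theta_a}e^{i\theta_b}e^{i\theta_c}e^{i\theta_d}=1$. Given an arbitrary circle $\sigma'\colon x^2+y^2+Dx+Ey+F=0$ with $D,E,F\in\RR$, substitute $p(\theta)$ and set $\zeta=e^{i\theta}$; using $\cos\theta=\tfrac12(\zeta+\zeta^{-1})$ and $\sin\theta=\tfrac1{2i}(\zeta-\zeta^{-1})$ and clearing denominators, the condition $p(\theta)\in\sigma'$ becomes $P(\zeta)=0$, where a short computation gives $P(\zeta)=(1-s^2)\zeta^4+2(D-iEs)\zeta^3+\bigl(2(1+s^2)+4F\bigr)\zeta^2+2(D+iEs)\zeta+(1-s^2)$. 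Since $\sigma$ is an ellipse and not a circle, $s^2\neq1$, so $P$ has degree exactly four and nonzero constant term; hence its four roots (with multiplicity) lie in $\CC\setminus\{0\}$, and by Vieta's formulas their product equals $(1-s^2)/(1-s^2)=1$.

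Next I would record the dictionary between intersection points and roots: for $\theta\in\RR$ the real point $p(\theta)$ lies on $\sigma'$ if and only if the modulus-one number $\zeta=e^{i\theta}$ is a root of $P$, and conversely every modulus-one root of $P$ gives a real point of $\sigma$ lying on $\sigma'$. I would also use the standard fact that, under the regular parametrisation $\theta\mapsto p(\theta)$, the intersection multiplicity of $\sigma$ and $\sigma'$ at $p(\theta_0)$ equals the order of vanishing of $P$ at $e^{i\theta_0}$, and that tangency of these two smooth curves means precisely intersection multiplicity at least two. With this, the forward direction is immediate: if $a,b,c,d\in\sigma$ all lie on a circle $\sigma'$ (with the convention that if two of them, say $c=d$, coincide then $\sigma'$ is tangent to $\sigma$ there), then $e^{i\theta_a},e^{i\theta_b},e^{i\theta_c},e^{i\theta_d}$ — counted with this multiplicity — are roots of $P$, hence all four of its roots since $\deg P=4$, so their product $1$ gives $e^{i(\theta_a+\theta_b+\theta_c+\theta_d)}=1$, i.e.\ $a\oplus b\oplus c\oplus d=o$.

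For the converse, suppose $a\oplus b\oplus c\oplus d=o$, and pick three distinct points among $a,b,c,d$, say $a,b,c$; these are not collinear (a line meets $\sigma$ in at most two points), so there is a unique circle $\sigma'$ through them. The numbers $e^{i\theta_a},e^{i\theta_b},e^{i\theta_c}$ are three distinct roots of $P$, and the fourth root $\zeta'$ satisfies $|\zeta'|=1$ since the product of all four roots is $1$; writing $\zeta'=e^{i\theta'}$, the point $d'=p(\theta')$ is a real point of $\sigma$ on $\sigma'$, and applying the forward direction to $\sigma'$ gives $\theta_a+\theta_b+\theta_c+\theta'\equiv\theta_a+\theta_b+\theta_c+\theta_d\pmod{2\pi}$, hence $\theta'\equiv\theta_d$ and $d'=d$. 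Thus $d\in\sigma'$ and $a,b,c,d$ are concyclic; and if $d$ coincides with, say, $c$, then $d'=c$ means $e^{i\theta_c}$ is a double root of $P$, i.e.\ $\sigma'$ is tangent to $\sigma$ at $c$. The one step that needs care is the bookkeeping with multiplicities: that $P$ really has degree four with nonzero constant term, so that no root escapes to $0$ or $\infty$ and all four are accounted for (this is precisely where $s^2\neq1$ is used), and that a coincidence of two of the four points matches a double root of $P$ matches tangency via the intersection-multiplicity fact; everything else is a one-line Vieta computation.
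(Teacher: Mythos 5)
Your proof is correct. Note that the paper itself gives no proof of Proposition~\ref{prop:ellipse}; it is quoted as a classical fact with references (Joachimsthal, and Berger et al., Problem 17.2), and your argument is essentially that classical one: substitute the eccentric-angle parametrisation into the general circle equation $x^2+y^2+Dx+Ey+F=0$, obtain the quartic $P(\zeta)$ in $\zeta=e^{i\theta}$ whose leading and constant coefficients are both $1-s^2\neq 0$, and read off $\zeta_1\zeta_2\zeta_3\zeta_4=1$ from Vieta, with the repeated-point case handled through the double-root/tangency dictionary. The auxiliary facts you invoke (the parametrisation is regular, the order of vanishing of $P$ at $e^{i\theta_0}$ equals the intersection multiplicity of the ellipse and the circle there, and tangency of the two smooth curves means multiplicity at least two) are standard and you flag them explicitly, so I see no gap.
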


Another way to look at this group law is that we are parametrising the ellipse using lines through $o=(1,0)$ (see for instance \cite{ST92}*{Section 1.1}).
More precisely, 
each point $a\in \sigma$ corresponds to the line $oa$;
$oa$ makes an angle $\pi-\theta_a/2$ with the $x$-axis,
and the set of lines through $o$ thus has a group structure equivalent to the one above.
This view lets us relate the group on the ellipse to the group on the acnodal cubic.
By Proposition \ref{prop:ellipseandacnodal}, 
inverting in $o$ maps the ellipse to an acnodal circular cubic $\gamma$, with $o$ becoming the isolated point of the cubic.
The lines through $o$ now parametrise the cubic,
and this parametrisation gives the same group on $\gamma$ as the line construction that we gave in Section~\ref{ssec:irreducible} (see \cite{ST92}*{Section 3.7}). 

\paragraph{Concentric circles.}
We now define a group on the union of two disjoint circles.
For notational convenience, we identify $\RR^2$ with $\CC$.
After an appropriate inversion, we can assume the circles to be
\[\sigma_1 = \{ e^{2\pi i t} : t \in [0, 1) \}, ~~~\sigma_2 = \{ re^{-2\pi i t} : t \in [0, 1) \}, \]
with $r>1$,
and we represent each element of $\sigma_1\cup\sigma_2$ as $r^\eps e^{2\pi i t}$ with $\eps\in \ZZ_2$ (with the obvious convention $r^0=1$ and $r^1=r$).
We define a group operation on $\sigma_1\cup\sigma_2$ by
\[r^{\eps_1} e^{2\pi i t_1} \oplus r^{\eps_2} e^{2\pi i t_2} = 
r^{(\eps_1+\eps_2)\bmod 2} e^{2\pi i (t_1+t_2)}, \]
which turns $\sigma_1\cup\sigma_2$ into a group isomorphic to $\RR/\ZZ \times \ZZ_2$,
with identity element $o = 1 = r^0 e^{2\pi i \cdot 0}$.
We again have the following concyclicity property, which is easily seen using symmetry.

\begin{prop}\label{prop:double-polygon-concyclic}
Points $a,b\in \sigma_1$ and  $c,d\in\sigma_2$ lie on a generalised circle if and only if $a\oplus b\oplus c\oplus d =o$.
If $a=b$ or $c=d$, then the generalised circle is tangent at that point.
\end{prop}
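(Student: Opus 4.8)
Identify $\RR^2$ with $\CC$ as in the statement, so that $\sigma_1=\{z:|z|=1\}$ and $\sigma_2=\{z:|z|=r\}$ are concentric circles. The plan is to combine the classical fact that a generalised circle in $\CC$ is exactly a zero set of $\alpha z\bar z+\bar\beta z+\beta\bar z+\gamma$ for suitable $\alpha,\gamma\in\RR$ and $\beta\in\CC$ (with the equation genuinely defining a line or a circle) with the two defining relations $\bar z=1/z$ on $\sigma_1$ and $\bar z=r^2/z$ on $\sigma_2$. Substituting either relation into the equation of a generalised circle $\sigma$ and clearing the denominator turns the intersection of $\sigma$ with the corresponding $\sigma_j$ into a \emph{quadratic} equation in $z$; moreover a repeated root corresponds precisely to $\sigma$ being tangent to $\sigma_j$, so the degenerate cases $a=b$ or $c=d$ in the statement are automatically subsumed.

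Concretely, for the forward direction I would start from a generalised circle $\sigma$ through $a,b\in\sigma_1$ and $c,d\in\sigma_2$, with equation $\alpha z\bar z+\bar\beta z+\beta\bar z+\gamma=0$. First note $\bar\beta\neq 0$: if $\bar\beta=0$ then $\beta=0$ and the equation reads $\alpha z\bar z+\gamma=0$, which, being a genuine generalised circle, is a circle centred at the origin and hence equal to $\sigma_1$ or $\sigma_2$ — impossible, since $\sigma$ meets both of the disjoint circles $\sigma_1,\sigma_2$. Substituting $\bar z=1/z$ and multiplying by $z$ shows that $a,b$ are the two zeros, with multiplicity, of the polynomial $\bar\beta z^2+(\alpha+\gamma)z+\beta$, so $ab=\beta/\bar\beta$ by Vieta; substituting $\bar z=r^2/z$ and multiplying by $z$ shows $c,d$ are the zeros of $\bar\beta z^2+(\alpha r^2+\gamma)z+\beta r^2$, so $cd=r^2\beta/\bar\beta$. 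Hence $cd=r^2\,ab$. Writing $a=e^{2\pi it_a}$, $b=e^{2\pi it_b}$, $c=re^{-2\pi it_c}$, $d=re^{-2\pi it_d}$ with $t_a,\dots,t_d\in[0,1)$ the coordinates of the four points in the representation of the proposition, this says $e^{-2\pi i(t_c+t_d)}=e^{2\pi i(t_a+t_b)}$, that is $t_a+t_b+t_c+t_d\in\ZZ$; together with $\eps_a+\eps_b+\eps_c+\eps_d=0+0+1+1\equiv 0\pmod 2$, this is exactly $a\oplus b\oplus c\oplus d=o$.

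For the converse, given $a\oplus b\oplus c\oplus d=o$, let $\sigma$ be the unique generalised circle through $a,b,c$ (if $a=b$, read this as the unique generalised circle tangent to $\sigma_1$ at $a$ and passing through $c$, which exists since $c\neq a$). Since $\sigma$ contains $a\notin\sigma_2$, we have $\sigma\neq\sigma_2$, so $\sigma$ meets $\sigma_2$ in $c$ together with at most one further (necessarily real) point $d'$, with $d'=c$ if $\sigma$ is tangent to $\sigma_2$ at $c$. Applying the forward direction to $a,b,c,d'$ gives $a\oplus b\oplus c\oplus d'=o$; comparing with the hypothesis yields $d'=\ominus(a\oplus b\oplus c)=d$, so $d\in\sigma$ and $a,b,c,d$ lie on the generalised circle $\sigma$.

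I do not expect a genuinely hard step here — consistent with the phrase ``easily seen'' in the statement. The only points requiring a little care are the verification that $\bar\beta\neq 0$ and the bookkeeping of the coincidences $a=b$ and $c=d$, where ``concyclic'' must be interpreted as tangency, hence as a repeated root of the relevant quadratic; both are handled uniformly by the substitutions $\bar z\mapsto 1/z$ and $\bar z\mapsto r^2/z$. This also exhibits the symmetry alluded to in the statement: rotations $z\mapsto e^{i\phi}z$ preserve $\sigma_1$ and $\sigma_2$ as well as concyclicity, and — because exactly two of the four points lie on each circle — they also preserve the sum $a\oplus b\oplus c\oplus d$; one may therefore rotate to the normal form $b=\bar a$, after which $a\oplus b\oplus c\oplus d=o$ reduces to $d=\bar c$, and the claim becomes the transparent fact that a generalised circle through the mirror pair $\{a,\bar a\}$ is symmetric in the real axis and hence meets $\sigma_2$ in a mirror pair.
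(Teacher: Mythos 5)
Your proposal is correct. The paper gives no written proof of this proposition at all — it is dispatched with ``easily seen using symmetry'', and the intended argument is essentially the one you sketch in your closing remark: rotate so that $b=\bar a$, note that the centre of any generalised circle through $a,b$ and $c,d$ lies on the perpendicular bisector of each chord (a line through the common centre), so the configuration is mirror-symmetric and the circle through $a,\bar a,c$ meets $\sigma_2$ again at $\bar c$. Your main argument takes a genuinely different, algebraic route: writing a generalised circle as $\alpha z\bar z+\bar\beta z+\beta\bar z+\gamma=0$, restricting to $\sigma_1$ and $\sigma_2$ via $\bar z=1/z$ and $\bar z=r^2/z$, and reading off $ab=\beta/\bar\beta$, $cd=r^2\beta/\bar\beta$ from Vieta, so that concyclicity becomes $\arg a+\arg b\equiv\arg c+\arg d\pmod{2\pi}$. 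What this buys is a uniform treatment of the tangential cases $a=b$ and $c=d$ as repeated roots, plus an explicit criterion; its only cost is the small verification, which you assert but do not write out, that tangency of $\sigma$ to $\sigma_j$ is equivalent to a double root (for $\sigma_1$, say: the product of the two roots has modulus one, so if one root lies on $\sigma_1$ so does the other, and any root on $\sigma_1$ satisfies the original circle equation, hence is an intersection point). Two delicate points you handle correctly and which are worth making explicit: the exclusion $\beta\neq 0$, without which Vieta collapses, and the sign convention on $\sigma_2$ — the group parameter of a point of $\sigma_2$ is the $t$ in $re^{-2\pi it}$, i.e.\ \emph{minus} its argument, which is exactly what turns your criterion $\arg a+\arg b\equiv\arg c+\arg d$ into $a\oplus b\oplus c\oplus d=o$ (and is the convention used in Constructions~\ref{constr:even}--\ref{constr:odd}); with the opposite convention the statement would be false. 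Your converse, via the unique fourth intersection point and cancellation in the group, mirrors how the paper argues the converse of Proposition~\ref{prop:cir_elliptic}.
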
 


\section{Constructions}\label{sec:constr}

\subsection{Ellipse}\label{constr:ellipse}

Let $\sigma$ be the ellipse defined by $x^2+(y/s)^2 =1$, with the group structure introduced in Section \ref{sec:groupsonother}.
Let $n\ge 5$.
We have a coset of a finite subgroup of size $n$ given by
\[H_n \oplus x=\setbuilder{\left(\cos \left(\frac{2\pi k}{n} - \frac{\pi h}{2n} \right), s\sin \left(\frac{2\pi k}{n} - \frac{\pi h}{2n} \right) \right)}{k=0,\dotsc,n-1} \subset \sigma,\]
where $4x = (\cos(-2\pi h /n), s \sin(-2\pi h/n)) \in H_n$ for some $h \in \{0, \dotsc, n-1\}$.
By Proposition \ref{prop:ellipse}, 
the circle through any three points $a \oplus x,b \oplus x,c \oplus x\in H_n \oplus x$ passes through the point $d \oplus x = \ominus a \ominus b\ominus c \ominus 3x \in H_n \oplus x$.
Therefore, the only way $H_n \oplus x$ spans an ordinary circle is when $d$ coincides with one of $a,b,c$ (which occurs if the circle is tangent to $\sigma$ at that point).
It follows that the number of ordinary circles is equal to 
\[\frac{1}{2}\left|\setbuilder{(k_1,k_2,k_3)\in\ZZ_n^3}{2k_1+k_2+k_3\equiv h \pmod{n}, \quad k_1, k_2, k_3\text{ distinct}}\right|,\]
which is $\frac12 n^2 -O(n)$.

Similarly, the number of $4$-point circles is equal to
\[\frac{1}{4!}\left|\setbuilder{(k_1,k_2,k_3,k_4)\in\ZZ_n^4}{k_1+k_2+k_3+k_4\equiv h \pmod{n}, \quad k_1, k_2, k_3, k_4 \text{ distinct}}\right|,\]
which is, by inclusion-exclusion, equal to $\frac{1}{24}(n^3 - 6n^2 + (8 + 3 \delta_n)n - 6 \eps_n)$, where $\delta_n$ is the number of solutions in $\ZZ_n$ to the equation $2k = h$ and $\eps_n$ is the number of solutions in $\ZZ_n$ to the equation $4k = h$.
Maximising over $h \in \ZZ_n$, this works out to
\begin{equation*}
\begin{cases}
\frac{1}{24}n^3 - \frac14 n^2 + \frac{7}{12}n  & \text{if } n \equiv 0 \pmod{4},\\
\frac{1}{24}n^3 - \frac14 n^2 + \frac{11}{24}n - \frac14  & \text{if } n \equiv 1, 3 \pmod{4},\\
\frac{1}{24}n^3 - \frac14 n^2 + \frac{7}{12}n - \frac12	& \text{if } n \equiv 2 \pmod{4}.
\end{cases}
\end{equation*}

\subsection{Circular cubic curve}\label{constr:cubic}

Let $\gamma$ be an irreducible circular cubic, and let $\oplus$ be the group operation defined in Section \ref{ssec:irreducible}.
It is well known (see for instance \cite{GT13}) that the group $(\gamma^*,\oplus)$ is isomorphic to the circle $\RR/\ZZ$ if $\gamma$ is acnodal or if $\gamma$ is smooth and has one connected component, and is isomorphic to $\RR/\ZZ\times\ZZ_2$ if $\gamma$ is smooth and has two connected components.
Let $H_n$ be a subgroup of order $n$ of $\gamma^*$, and let $x\in\gamma^*$ be such that $4x = \omega\ominus h$ for some $h \in H_n$.
By Proposition \ref{prop:cir_elliptic}, the number of ordinary generalised circles in the coset $H_n \oplus x$ equals
\[\frac{1}{2}\left|\setbuilder{(a, b, c)\in H_n^3}{2a \oplus b \oplus c = h, \quad a, b, c \text{ distinct}}\right|,\]
which is easily seen to equal $\frac12 n^2 - O(n)$.
Similarly, the number of ordinary circles in $H_n \oplus x$ equals
\[\frac{1}{2}\left|\setbuilder{(a, b, c)\in H_n^3}{2a \oplus b \oplus c = h, \quad a, b, c\neq \ominus x \text{ and distinct}}\right|,\]
which also equals $\frac12 n^2 - O(n)$.

As in the previous construction, if $o\notin H_n \oplus x$ (equivalently, $x\notin H_n$) then the number of $4$-point circles is equal to $\frac{1}{24}(n^3 - 6n^2 + (8 + 3 \delta_n)n - 6 \eps_n)$, where $\delta_n$ is the number of solutions in $H_n$ to the equation $2k = h$ and $\eps_n$ is the number of solutions in $H_n$ to the equation $4k = h$. If $H_n$ is cyclic, then we get the same numbers as in the previous construction. Otherwise, $n = 0 \pmod{4}$, $H_n\cong\ZZ_{n/2}\times\ZZ_2$, and maximising over $h \in H_n$, the number of $4$-point circles equals
\begin{equation*}
\begin{cases}
\frac{1}{24}n^3 - \frac14 n^2 + \frac{5}{6}n  & \text{if } n \equiv 0 \pmod{8},\\
\frac{1}{24}n^3 - \frac14 n^2 + \frac{5}{6}n - 1	& \text{if } n \equiv 4 \pmod{8},
\end{cases}
\end{equation*}
which is greater than the corresponding number in the previous construction.

\subsection{`Aligned' double polygons}\label{constr:even}

Let $n\ge 6$ be even and set $m=n/2$.
We identify $\RR^2$ with $\CC$.
Let $\sigma_1$ be the circle with centre the origin and radius one, and $\sigma_2$ the circle with centre the origin and radius $r>1$.
Let $S_1=\setbuilder{e^{2\pi i k/m}}{k=0,\dots,m-1}\subset \sigma_1$ and $S_2=\setbuilder{re^{2\pi i k/m}}{k=0,\dots,m-1}\subset \sigma_2$.
Thus, $S_1$ and $S_2$ are the vertex sets of regular $m$-gons on $\sigma_1$ and $\sigma_2$ that are `aligned' in the sense that their points lie at the same set of angles from the common centre (see Figure \ref{fig:aligned}).

Let $S=S_1\cup S_2$.
By Proposition~\ref{prop:double-polygon-concyclic}, the points $a,b\in\sigma_1$, $c,d\in\sigma_2$ are collinear or concyclic if and only if $a\oplus b\oplus c\oplus d = o$.
In particular, if $a=b$, then the generalised circle through the three points is tangent to $\sigma_1$.
It follows that if $n\ge 8$, the  ordinary generalised circles of $S$ are exactly those through $e^{2\pi i k_1/m}, re^{-2\pi i k_2/m},re^{-2\pi i k_3/m}$ or through $re^{-2\pi i k_1/m}, e^{2\pi i k_2/m},e^{2\pi i k_3/m}$ where $2k_1+k_2+k_3\equiv0\pmod{m}$, with $k_2\not\equiv k_3\pmod{m}$.

For generic $r>1$, we then obtain that the number of ordinary generalised circles equals 
\[\left|\setbuilder{(k_1,k_2,k_3)\in \ZZ_m^3}{2k_1+k_2+k_3\equiv0\pmod{m}, \quad k_2, k_3 \text{ distinct}}\right|\]
(although $k_2$ and $k_3$ are not ordered, we either have two points on $\sigma_1$ or two points on $\sigma_2$).
This equals $m(m-2)$ if $m$ is even and $m(m-1)$ if $m$ is odd.
That is, for generic $r$, we obtain $\frac{1}{4}n^2-n$ ordinary generalised circles if $n\equiv 0\pmod{4}$ and $\frac{1}{4}n^2-\frac12n$ ordinary generalised circles if $n\equiv 2\pmod{4}$. 

If we choose $r=(\cos(2\pi k/m))^{-1}$ (there are $\lfloor m/4\rfloor$ choices for $r$), then the tangent lines at points of $S_1$ pass through two points of $S_2$, so are ordinary generalised circles.
Thus, for these choices of $r$ we lose $m$ ordinary circles, and obtain $\frac{1}{4}n^2-\frac{3}{2}n$ ordinary circles if $n\equiv 0\pmod{4}$ and $\frac{1}{4}n^2-n$ ordinary circles if $n\equiv 2\pmod{4}$.
Note that this is much less than the number of ordinary circles given by Constructions~\ref{constr:ellipse} and~\ref{constr:cubic}.
 
Similarly, the number of $4$-point generalised circles spanned by $S$ equals
\[\frac{1}{4}\left|\setbuilder{(k_1,k_2,k_3,k_4)\in \ZZ_m^4}{k_1+ k_2+ k_3+k_4\equiv0\pmod{m}, \quad k_1 \ne k_2 \text{ and } k_3 \ne k_4}\right|,\]
which is $\frac14 m^3 - O(m^2) = \frac{1}{32}n^3 - O(n^2)$, also much less than the number in Constructions~\ref{constr:ellipse} and~\ref{constr:cubic}.

\begin{figure}
\centering
\begin{minipage}{0.4\textwidth}
\centering
\definecolor{cqcqcq}{rgb}{0.0,0.0,0.6}
\definecolor{qqqqff}{rgb}{0.,0.,0.}
\begin{tikzpicture}[line cap=round,line join=round,>=triangle 45,thick,x=1.0cm,y=1.0cm,scale=1.25]
\clip(-2.2,-2.2) rectangle (2.2,2.2);
\draw(0.,0.) circle (2.cm);
\draw(0.,0.) circle (1.cm);
\draw [color=cqcqcq] (0.49422649730810375,0.8560254037844387) circle (0.9942767863877447cm);
\draw [color=cqcqcq] (-0.3,-0.5196152422706631) circle (1.4cm);
\draw [fill=qqqqff] (2.,0.) circle (1.4pt);
\draw [fill=qqqqff] (1.,1.7120508075688772) circle (1.4pt);
\draw [fill=qqqqff] (-1.,1.7320508075688772) circle (1.4pt);
\draw [fill=qqqqff] (-2.,0.) circle (1.4pt);
\draw [fill=qqqqff] (-1.,-1.7320508075688772) circle (1.4pt);
\draw [fill=qqqqff] (1.,-1.7320508075688772) circle (1.4pt);
\draw [fill=qqqqff] (1.,0.) circle (1.4pt);
\draw [fill=qqqqff] (0.5,0.8660254037844386) circle (1.4pt);
\draw [fill=qqqqff] (-0.5,0.8660254037844386) circle (1.4pt);
\draw [fill=qqqqff] (-1.,0.) circle (1.4pt);
\draw [fill=qqqqff] (-0.5,-0.8660254037844386) circle (1.4pt);
\draw [fill=qqqqff] (0.5,-0.8660254037844386) circle (1.4pt);
\draw [color=black] (-1,-.7) node {$\sigma_1$};
\draw [color=black] (1.7,-.5) node {$\sigma_2$};
\end{tikzpicture}
\caption{`Aligned' double hexagon}\label{fig:aligned}
\end{minipage}
\qquad
\begin{minipage}{0.4\textwidth}
\centering
\definecolor{cqcqcq}{rgb}{0.0,0.0,0.6}
\definecolor{qqqqff}{rgb}{0.,0.,0.}
\begin{tikzpicture}[line cap=round,line join=round,>=triangle 45,thick,x=1.0cm,y=1.0cm,scale=1.25, rotate=90]
\clip(-2.2,-2.2) rectangle (2.2,2.2);
\draw(0.,0.) circle (2.cm);
\draw(0.,0.) circle (1.cm);
\draw [color=cqcqcq] (0.6562278325107631,1.1366199472494412) circle (0.6702984246947132cm);
\draw [color=cqcqcq] (-0.26168644528051416,-0.45325421887794337) circle (1.4766271094389718cm);
\draw [color=cqcqcq] (0.375,-0.6495190528383289) circle (1.25cm);
\draw [color=cqcqcq] (-0.375,0.6495190528383289) circle (1.25cm);
\draw [fill=qqqqff] (2.,0.) circle (1.4pt);
\draw [fill=qqqqff] (1.,1.7120508075688772) circle (1.4pt);
\draw [fill=qqqqff] (-1.,1.7320508075688772) circle (1.4pt);
\draw [fill=qqqqff] (-2.,0.) circle (1.4pt);
\draw [fill=qqqqff] (-1.,-1.7320508075688772) circle (1.4pt);
\draw [fill=qqqqff] (1.,-1.7320508075688772) circle (1.4pt);
\draw [fill=qqqqff] (0.,1.) circle (1.4pt);
\draw [fill=qqqqff] (0.8660254037844386,0.5) circle (1.4pt);
\draw [fill=qqqqff] (0.8660254037844386,-0.5) circle (1.4pt);
\draw [fill=qqqqff] (0.,-1.) circle (1.4pt);
\draw [fill=qqqqff] (-0.8660254037844386,0.5) circle (1.4pt);
\draw [fill=qqqqff] (-0.8660254037844386,-0.5) circle (1.4pt);
\draw [color=black] (.5,-1.1) node {$\sigma_1$};
\draw [color=black] (1.7,.5) node {$\sigma_2$};
\end{tikzpicture}
\caption{`Offset' double hexagon}\label{fig:offset}
\end{minipage}
\end{figure}

\subsection{`Offset' double polygons}\label{constr:offset}
We modify the previous construction by rotating $S_2$ around the origin by an angle of $\pi k/m$.
This results in $S_2'=\setbuilder{re^{-i \pi (2k-1)/m}}{k=0,\dots,m-1}$ and $S'=S_1\cup S_2'$ (see Figure~\ref{fig:offset}). 
As before, if $n\ge 8$, the ordinary generalised circles of $S'$ are exactly those through $e^{2\pi i k_1/m}, re^{-i\pi (2k_2-1)/m},re^{-i\pi (2k_3-1)/m}$ or through $re^{-i\pi (2k_1-1)/m}, e^{2\pi i k_2/m},e^{2\pi i k_3/m}$, where $2k_1+k_2+k_3\equiv1\pmod{m}$ with $k_2\not\equiv k_3\pmod{m}$.

For generic $r>1$, we now have to count the number of ordered triples in the set 
\[\setbuilder{(k_1,k_2,k_3)\in \ZZ_m^3}{2k_1+k_2+k_3\equiv1\pmod{m}, \quad k_2, k_3 \text{ distinct}}.\]
This equals $m^2$ if $m$ is even and $m(m-1)$ if $m$ is odd.
That is, for generic $r$, we obtain $\frac{1}{4}n^2$ ordinary generalised circles if $n\equiv 0\pmod{4}$, worse than Construction~\ref{constr:even}, and $\frac{1}{4}n^2-\frac12n$ ordinary generalised circles if $n\equiv2\pmod{4}$, the same number as in Construction~\ref{constr:even}.

Again, if we choose $r=(\cos(2\pi k/m))^{-1}$ (there are $\lfloor m/4\rfloor$ choices for $r$), we lose $m$ ordinary circles.
Thus, we obtain $\frac{1}{4}n^2-n$ ordinary circles if $n\equiv2\pmod{4}$, the same number as in Construction~\ref{constr:even}.

As in Construction~\ref{constr:even}, we get $\frac{1}{32}n^3 - O(n^2)$ $4$-point circles.

\subsection{Punctured double polygons}\label{constr:odd}
Let $n=2m-1\ge 11$ be odd. Take Construction~\ref{constr:even} with $n+1=2m$ points and remove an arbitrary point $p\in S_1$.

First assume that $m$ is odd.
Before we remove $p$, there are $m(m-1)$ ordinary generalised circles.
Of these, there are $(m-1)/2$ tangent at $p$. There are also $m-1$ ordinary generalised circles through $p$ tangent at some point of $S_2$.
Thus, by removing $p$, we destroy $3(m-1)/2$ ordinary generalised circles and create $\binom{m}{2}-(m-1)/2$ new ones.
Therefore, $S\setminus\{p\}$ has
\[m(m-1)-\frac{3}{2}(m-1)+\binom{m}{2}-\frac{1}{2}(m-1) = \frac32 m^2 - \frac{7}{2}m + 2\] ordinary generalised circles.
That is, there are $\frac38 n^2 - n + \frac{5}{8}$ ordinary generalised circles if $n\equiv1\pmod{4}$.

Next assume that $m$ is even.
Before we remove $p$, there are $m(m-2)$ ordinary generalised circles, of which 
there are $(m-2)/2$ through two different points of $S_2$ tangent at $p$, and there are also $m-2$ ordinary generalised circles through $p$ tangent at a point of $S_2$.
As before, we obtain
\[ m(m-2)-\frac32(m-2)+\binom{m}{2}-\frac12(m-2) = \frac32 m^2 - \frac{9}{2}m + 4\] ordinary generalised circles.
Thus, we obtain $\frac38 n^2 - \frac{3}{2}n + \frac{17}{8}$ ordinary generalised circles if $n\equiv3\pmod{4}$.

Instead of starting with Construction~\ref{constr:even}, we can take the `offset' Construction~\ref{constr:offset} and remove a point.
It is easy to see that when $n\equiv1\pmod{4}$ we obtain the same number of ordinary generalised circles, while if $n\equiv3\pmod{4}$ we obtain more.

Since there are no $5$-point circles in Constructions~\ref{constr:even} and \ref{constr:offset} when $m\ge 6$, removing a point does not add any $4$-point circle, but destroys $O(n^2)$ of them. We thus get $\frac{1}{32}n^3 - O(n^2)$ $4$-point generalised circles, which is asymptotically the same as in Constructions~\ref{constr:even} and \ref{constr:offset}.

\subsection{Inverted double polygons}\label{constr:linecircle}
We can use inversion to make new constructions out of old ones.

Invert Construction~\ref{constr:odd} in the removed point $p$.
The resulting point set has $m$ points on a circle and $m-1$ points on a line disjoint from the circle.
Every ordinary circle after the inversion corresponds to an ordinary generalised circle not passing through $p$ before the inversion.
If $m$ is odd, there are $(m-1)/2$ ordinary generalised circles tangent at $p$ and a further $m-1$ ordinary generalised circles through $p$ tangent to $\sigma_2$,
so we obtain $m(m-1) - 3(m-1)/2 = \frac12(m-1)(2m-3)$ ordinary circles.
For even $m$ we similarly obtain $m(m-2) - 3(m-2)/2 = \frac12(m-2)(2m-3)$ ordinary circles.
That is, we have $\frac14(n-1)(n-2) = \frac14 n^2 - \frac{3}{4}n + \frac12$ ordinary circles when $n\equiv1\pmod{4}$ and $\frac14(n-3)(n-2) = \frac14 n^2 - \frac{5}{4}n + \frac32$ ordinary circles when $n\equiv3\pmod{4}$.

If we remove another point from this inverted construction,  we obtain a set of $n$ points where $n$ is even, with $\frac38n^2 - O(n)$ ordinary circles.

\subsection{Other inverted examples}
If we invert Construction~\ref{constr:ellipse} in a point on the ellipse that is not in the set $S$, then by Proposition~\ref{prop:ellipseandacnodal}, we obtain points on an acnodal circular cubic (without its acnode) as in Construction~\ref{constr:cubic}, with the same number of ordinary and $4$-point generalised circles.

If we invert a circular cubic in a point not on the curve, then we obtain a bicircular quartic by Lemma~\ref{lem:inversion}. 
There will again be $\frac12 n^2 - O(n)$ ordinary circles (or ordinary generalised circles) and $\frac{1}{24}n^3-O(n^2)$ $4$-point circles among the inverted points.


\section{The structure theorems}\label{sec:proof}

\subsection{Proof of the weak structure theorem}

The proofs of our structure theorems for sets with few ordinary circles crucially rely on the following structure theorem for sets with few ordinary lines due to Green and Tao~\cite{GT13}.
Recall that an \emph{ordinary line} is a line containing exactly two points of the given point set.

\begin{theorem}[Green--Tao]\label{thm:GT}
Let $K>0$ and let $n$ be sufficiently large depending on $K$.
If a set $P$ of $n$ points in $\RR^2$ spans at most $Kn$ ordinary lines, then $P$ differs in at most $O(K)$ points from an example of one of the following types:
\begin{enumerate}[label=\rom]
\item $n - O(K)$ points on a line;
\item $m$ points each on a line and a disjoint conic, for some $m = n/2 \pm O(K)$;
\item $n \pm O(K)$ points on an acnodal or smooth cubic.
\end{enumerate}
\end{theorem}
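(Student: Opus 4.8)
The plan is, in essence, to reconstruct the argument of Green and Tao, which splits into a \emph{weak} phase --- forcing all but $O(K)$ of the points onto a cubic curve, where the cubic is allowed to be reducible (a line plus a conic, a conic plus the line at infinity, three lines, and so on) --- and a \emph{refinement} phase that determines which such curves can support $n$ points spanning only $O(Kn)$ ordinary lines, thereby extracting the precise trichotomy (i)--(iii).

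\textbf{Weak phase.} Writing $t_k$ for the number of lines meeting $P$ in exactly $k$ points, Melchior's inequality $t_2 \ge 3 + \sum_{k\ge4}(k-3)t_k$ shows that the hypothesis $t_2\le Kn$ forces, in a weighted sense, almost every spanned line to be $3$-rich. The first dichotomy is: either some line $\ell$ contains $n-O(\sqrt{Kn})$ of the points --- in which case a double count (a point off $\ell$ together with a point of $P\cap\ell$ lying on no other rich line produces an ordinary line) pushes all but $O(K)$ points onto $\ell$, giving case (i) --- or, by a Beck-type incidence bound, $P$ spans $\gtrsim n^2$ lines and no line is nearly full. In the latter case one seeks a single cubic $\gamma$ with $|P\setminus\gamma|=O(K)$. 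Since any nine points of the plane lie on a cubic, a cubic $\gamma$ maximising $|P\cap\gamma|$ exists, and the abundance of $3$-rich lines, fed through Chasles's theorem (Theorem~\ref{thm:cb}) in the style of the classical characterisation of orchard-type configurations as subsets of cubics --- made robust to the $O(Kn)$ exceptional pairs by an iterative argument --- forces $|P\setminus\gamma|=O(K)$: a point off $\gamma$ would otherwise be incident to too many ordinary lines.

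\textbf{Refinement phase.} With $P$ lying, up to $O(K)$ points, on a cubic $\gamma$, I would split on the type of $\gamma$. If $\gamma$ is irreducible, the collinearity-to-group-law dictionary --- three regular points are collinear iff they sum to a fixed element in the group on $\gamma^*$, cf.\ the discussion preceding Proposition~\ref{prop:cir_elliptic} --- converts the count of ordinary lines into a count of solutions of $x\oplus y\oplus z=c$ with a repeated coordinate. This is $O(Kn)$ precisely when $P$ differs in $O(K)$ points from a coset of a subgroup; moreover, if $\gamma$ is crunodal or cuspidal, the shape of the corresponding group ($\RR^*$ or $\RR$) makes the count too large unless $O(K)$ points are removed to reach an acnodal or smooth cubic --- this is case (iii). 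If $\gamma$ is reducible, its dominant component is a conic or a line; since $\Omega(n)$ points on a single conic already span $\Omega(n^2)$ ordinary lines, the only configuration that survives is $m=n/2\pm O(K)$ points on a line together with $m$ points on a \emph{disjoint} conic --- case (ii). All remaining reducible possibilities (a conic meeting its line, two lines, three lines) produce $\gg Kn$ ordinary lines after the $O(K)$ cleaning, so they either collapse into case (i) or are excluded.

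\textbf{Main obstacle.} The crux is the ``mostly on a cubic'' step of the weak phase: promoting the combinatorial hypothesis that almost every line is $3$-rich into the single algebraic conclusion that almost every point lies on one cubic, with an error that is genuinely linear in $K$. The Cayley--Bacharach mechanism only bites once enough points are already certified on a common cubic and once a candidate point comes equipped with three ``good'' $3$-rich lines whose far endpoints are all captured, so the argument must be iterative --- repeatedly discard the $O(1)$ points incident to the most ordinary lines, re-certify $\gamma$, and use a Szemer\'edi--Trotter-type bound to guarantee that at each stage a positive fraction of the surviving points still carry three suitable rich lines. A secondary difficulty is the exhaustive case analysis of singular and reducible cubics in the refinement phase, where the group-law computations and the exact form of the $O(K)$ error term have to be controlled carefully.
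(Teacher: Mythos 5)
You should first note that the paper does not prove this statement at all: Theorem~\ref{thm:GT} is quoted from Green and Tao \cite{GT13} and used as a black box (it is the external input on which Theorems~\ref{thm:weak} and \ref{thm:strong} are built), so there is no internal proof to compare against. Your proposal is therefore an attempt to reconstruct the Green--Tao theorem itself, and as a proof it has a genuine gap exactly where you locate the ``main obstacle''. The step from ``at most $Kn$ ordinary lines'' to ``all but $O(K)$ points on a single cubic'' is the bulk of the Green--Tao paper, and the mechanism you propose for it does not work as stated: if you choose a cubic $\gamma$ maximising $|P\cap\gamma|$, a point $p\notin\gamma$ need not be incident to \emph{any} ordinary line, since every line through $p$ may be $3$-rich with its other two points on $\gamma$ --- this is precisely what happens in near-extremal configurations before cleaning, so ``a point off $\gamma$ would otherwise be incident to too many ordinary lines'' is false as a general principle. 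Chasles/Cayley--Bacharach (Theorem~\ref{thm:cb}) only transfers information once eight intersection points of two auxiliary cubics are already certified on a common cubic, and your iterative scheme never produces the initial certificate, let alone with an error that is $O(K)$ rather than, say, $O(\sqrt{Kn})$ or $o(n)$. Green and Tao instead pass through Melchior's inequality to extract local ``triangular'' structure around most points, first cover $P$ by a bounded number of lines, conics and cubics, and only then use additive-combinatorial tools (of exactly the kind of Proposition~\ref{prop:A5}, which this paper quotes from their appendix) to collapse to the single-curve statement with an error linear in $K$; none of that machinery is replaced by your sketch.

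Two smaller points in the refinement phase also need repair. The assertion that $\Omega(n)$ points on a single conic already force $\Omega(n^2)$ ordinary lines is only true when there is no other large component absorbing the chords: roughly $n/2$ points on a conic together with $n/2$ points on a line is exactly case~(ii), so the argument must be a balance count showing that any split other than $m=n/2\pm O(K)$, or a non-disjoint line--conic pair beyond $O(K)$ discrepancy, creates $\gg Kn$ ordinary lines. Likewise the exclusion of crunodal and cuspidal cubics in case~(iii) requires the quantitative statement that finite subsets of $\RR^*$ or $\RR$ that are almost closed under the relevant operation are too small or too structured to avoid $\gg Kn$ ordinary lines; this is again an additive-combinatorial count, not a formal consequence of the group isomorphism type. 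In short: the outline follows the Green--Tao roadmap, but the decisive ``mostly on one cubic'' step is asserted rather than proved, and the proposed shortcut for it fails.
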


\bigskip

We commence the proof of Theorem~\ref{thm:weak}.
Let $P$ be a set of $n$ points spanning at most $Kn^2$ ordinary generalised circles.
We wish to show that $P$ lies mostly on a bicircular quartic (we will repeatedly use `mostly' to mean `for all but $O(K)$ points').

Note that for at least $2n/3$ points $p$ of $P$, there are at most $9Kn$ ordinary circles through $p$, hence the set $I_p(P \setminus \{p\})$ spans at most $9Kn$ ordinary lines. 
Let $P'$ be the set of such points.
For $n$ sufficiently large depending on $K$,
applying Theorem~\ref{thm:GT} to $I_p(P \setminus \{p\})$ for any $p \in P'$ gives that $I_p(P \setminus \{p\})$ lies mostly on a line, a line and a conic, an acnodal cubic, or a smooth cubic.

If there exists $p \in P'$ such that $I_p(P \setminus \{p\})$ lies mostly on a line, then inverting again in $p$, we see that $P$ must lie mostly on a line or a circle.

If there exists $p \in P'$ such that $I_p(P \setminus \{p\})$ lies mostly on a line $\l$ and a disjoint conic $\sigma$, 
we have two cases,
depending on whether $p$ lies on $\l$ or not.

If $p \in \l$, we invert again in $p$ to find that $P$ lies mostly on the union of $\l$ and $I_p(\sigma)$.
By Lemma~\ref{lem:inversion}, 
$I_p(\sigma)$ is either a circle (if $\sigma$ is a circle) or an irreducible bicircular quartic (if $\sigma$ is a non-circular conic). 
Furthermore, $p$ is the only point that could possibly lie on both $\l$ and $I_p(\sigma)$. 
Since roughly $n/2$ points of $P$ lie on $\l$, there must be another point $q \in \l \cap P'$ that does not lie on $I_p(\sigma)$.
In $I_q(P \setminus \{q\})$, 
the line $\l$ remains a line, 
and by definition of $P'$ the set $I_q(P \setminus \{q\})$ spans few ordinary lines,
so Theorem~\ref{thm:GT} tells us $I_q(I_p(\sigma))$ is a conic. 
It follows from Lemma~\ref{lem:inversion} that $I_p(\sigma)$ cannot be a quartic, 
since we inverted in the point $q$ outside $I_p(\sigma)$ and did not obtain a quartic.
That means $I_p(\sigma)$ has to be a circle, and it is disjoint from $\l$.
Thus, $P$ lies mostly on the union of a line and a disjoint circle.

If $p \notin \l$, we invert in $p$ to see that $P$ lies mostly on the union of the circle $I_p(\l)$ and the curve $I_p(\sigma)$, which is either a circle or a quartic.
Again $p$ is the only point that can lie on both curves.
Inverting in another point $q \in I_p(\l) \cap P'$,
$I_q(I_p(\l))$ becomes a line, 
so Theorem~\ref{thm:GT} tells us that $I_q(I_p(\sigma))$ is a conic, so that $I_p(\sigma)$ must be a circle disjoint from $I_p(\l)$
as before.
Thus, $P$ lies mostly on the union of two disjoint circles.

The case that remains is when for all $p\in P'$, the set $I_p(P \setminus \{p\})$ lies mostly on an acnodal or smooth cubic $\gamma$. 
Fix such a $p$, and consider $I_p(\gamma)$, which mostly contains $P$. 
If $\gamma$ is not a circular cubic, then by the classification in Section \ref{sec:circularcurves} it has circular degree three, 
so $I_p(\gamma)$ has circular degree three as well. 
For any $q \in I_p(\gamma) \cap P'$ other than $p$,
the curve $I_q(I_p(\gamma))$ is also a cubic curve,
 by the definition of $P'$ and Theorem \ref{thm:GT}.
By Case~\ref{inversion-case3} of Lemma~\ref{lem:inversion},
this can only happen if $q$ is a singularity of $I_p(\gamma)$.
But $I_p(\gamma)$ is an irreducible curve of degree at most six, and so has at most 10 singularities by~\cite{W78}*{Theorem 4.4}, which is a contradiction.
So $\gamma$ must be a circular cubic that is acnodal or smooth.
If $\gamma$ is acnodal, then $I_p(\gamma)$ is either a bicircular quartic (if $p\not\in \gamma$), 
an acnodal circular cubic (if $p$ is a regular point of $\gamma$), 
or a non-circular conic (if $p$ is the singularity of $\gamma$).
In the last case, the conic is an ellipse by Proposition \ref{prop:ellipseandacnodal}.
If $\gamma$ is smooth, 
then $I_p(\gamma)$ is either a bicircular quartic or a smooth circular cubic.

We have encountered the following curves that $P$ could mostly lie on: 
a line, a circle, an ellipse, a disjoint union of a line and a circle, a disjoint union of two circles, a circular cubic, or a bicircular quartic.
All of these are subsets of bicircular quartics, which proves the statement of Theorem \ref{thm:weak}.
\hfill$\blacksquare$

\subsection{Proof of the strong structure theorem}
We now prove Theorem~\ref{thm:strong}.
First of all, as explained in Section~\ref{sec:constr}, a subgroup of an ellipse and an appropriate coset of a subgroup of a smooth circular cubic both have at most $\frac12 n^2$ ordinary generalised circles, and a double polygon has at most $\frac14 n^2$ ordinary generalised circles.
It follows from Lemma~\ref{lemma:stability} below that if we add and/or remove $O(K)$ points, then there will be at most $O(Kn^2)$ ordinary generalised circles.

\begin{lemma}\label{lemma:stability}
Let $S$ be a set of $n$ points in $\RR^2$ with $s$ ordinary generalised circles.
Let $T$ be a set that differs from $S$ in at most $K$ points: $|S\tri T|\le K$.
Then $T$ has at most $s + O(Kn^2 + K^2n + K^3)$
ordinary generalised circles.
\end{lemma}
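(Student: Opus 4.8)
The plan is to bound the damage done by each of the two operations---removing a point and adding a point---separately, since $S$ and $T$ differ in at most $K$ points and we can pass from $S$ to $T$ by removing at most $K$ points and then adding at most $K$ points. More precisely, I would write $S \tri T = (S \setminus T) \cup (T \setminus S)$, let $R = S \cap T$, and consider the chain $S \rightsquigarrow R \rightsquigarrow T$, where the first step deletes at most $K$ points and the second inserts at most $K$ points. So it suffices to prove two claims: first, deleting a single point from an $N$-point set changes the number of ordinary generalised circles by at most $O(N^2)$; and second, inserting a single point into an $N$-point set increases the number of ordinary generalised circles by at most $O(N^2)$. Iterating each of these at most $K$ times over point sets whose size stays within $n + O(K)$ gives the claimed bound $O(Kn^2 + K^2 n + K^3)$ (the lower-order terms $K^2 n$ and $K^3$ absorbing the fact that intermediate sets have size up to $n + K$).

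First I would handle deletion. Suppose we remove a point $p$ from a set $S'$ of $N$ points. An ordinary generalised circle of $S' \setminus \{p\}$ is a generalised circle meeting $S' \setminus \{p\}$ in exactly three points; relative to $S'$ such a circle passes through exactly three points of $S'$ (if it avoids $p$) or exactly four points of $S'$ (if it passes through $p$, which is then destroyed as a $4$-point circle but recreated as an ordinary one). So the ordinary generalised circles of $S' \setminus \{p\}$ are exactly the ordinary generalised circles of $S'$ avoiding $p$, together with the $4$-point generalised circles of $S'$ through $p$. Thus the count changes by at most the number of ordinary or $4$-point generalised circles through $p$, which is at most the number of generalised circles through $p$ meeting $S' \setminus \{p\}$ in at least two further points. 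Any three points of $S'$ determine at most one generalised circle (three distinct points lie on a unique generalised circle, as a circle or a line), so the number of generalised circles through $p$ and at least two other points of $S'$ is at most $\binom{N-1}{2} = O(N^2)$.

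Next, insertion. Suppose we add a point $q$ to a set $S'$ of $N$ points, forming $S' \cup \{q\}$. A new ordinary generalised circle either passes through $q$, or does not. If it does not pass through $q$, then it was already a generalised circle meeting $S'$ in exactly three points, i.e.\ an ordinary generalised circle of $S'$ --- so adding $q$ creates no new ordinary generalised circle avoiding $q$ (it can only destroy some, by making them $4$-point circles, which only decreases the count). Hence the \emph{increase} in the number of ordinary generalised circles is at most the number of ordinary generalised circles of $S' \cup \{q\}$ that pass through $q$, which is at most the number of generalised circles through $q$ and two further points of $S'$, again at most $\binom{N}{2} = O(N^2)$ by uniqueness of the generalised circle through three points.

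Finally I would assemble the bound. Starting from $S$ with $s$ ordinary generalised circles, performing $j \le K$ deletions changes the count by at most $\sum_{i=0}^{j-1} O((n-i)^2) = O(Kn^2)$, reaching $R = S \cap T$ with at most $s + O(Kn^2)$ ordinary generalised circles; then performing $j' \le K$ insertions, on sets of size at most $n + K$, increases the count by at most $\sum_{i=0}^{j'-1} O((|R| + i)^2) = O(K(n+K)^2) = O(Kn^2 + K^2 n + K^3)$. Adding these gives that $T$ has at most $s + O(Kn^2 + K^2 n + K^3)$ ordinary generalised circles, as required. The only point that needs a word of care---and the nearest thing to an obstacle---is the bookkeeping of which circles switch between the "ordinary" and "$4$-point" categories when a point is added or removed, and the observation that in both directions only the terms we want get created; everything else is an elementary double-count resting on the fact that three distinct points determine a unique generalised circle.
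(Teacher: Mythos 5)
Your proposal is correct and follows essentially the same route as the paper: the paper likewise bounds the increase per added point by $\binom{n}{2}$ (ordinary circles through the new point) and per removed point by the number of $4$-point generalised circles through it, then sums over $O(K)$ operations on sets of size $n+O(K)$ to get $O(Kn^2+K^2n+K^3)$. The only cosmetic difference is that the paper sharpens the removal bound to $\tfrac13\binom{n-1}{2}$ by noting each $4$-point circle through a fixed point accounts for three pairs of the other points, whereas you use the cruder $\binom{N-1}{2}$, which is equally sufficient for the stated $O$-bound.
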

\begin{proof}
First note that if we add a point to any set of $n$ points, we create at most $\binom{n}{2}$ ordinary generalised circles.
Secondly, since two circles intersect in at most two points, the number of $4$-point circles through a fixed point in a set of $n$ points is at most $\frac13\binom{n-1}{2}$, so by removing a point we create at most $\frac13\binom{n-1}{2}<\binom{n}{2}$ ordinary generalised circles.
It follows that by adding and removing $O(K)$ points, we create at most
\[ \binom{n}{2}+\binom{n+1}{2}+\dots+\binom{n+K-1}{2} =O(Kn^2 + K^2n + K^3)
\]
ordinary generalised circles.
\end{proof}

Next, let $P$ be a set of $n$ points with at most $Kn^2$ ordinary generalised circles.
From the proof of Theorem~\ref{thm:weak} above, we see that  $P$ differs in at most $O(K)$ points from a line, a circle, an ellipse, a disjoint union of a line and a circle, a disjoint union of two circles, a circular cubic, or a bicircular quartic. 
Moreover, in the proof we saw that the circular cubic must be acnodal or smooth, and that the bicircular quartic has the property that if we invert in a point on the curve, the resulting circular cubic is acnodal or smooth.

Using inversions, we can reduce the number of types of curves that we need to analyse further.

\begin{itemize}
\item If $P$ lies mostly on a line, then we are in Case~\ref{casei} of Theorem~\ref{thm:strong}, so we are done. 

\item If $P$ lies mostly on a circle, then inverting in a point on the circle puts us in Case~\ref{casei} again. 

\item If $P$ lies mostly on an ellipse, 
then inverting in a point of the ellipse places $P$ mostly on an acnodal circular cubic.

\item If $P$ lies mostly on a bicircular quartic, then inverting in any regular point on the curve gives us a circular cubic.
As mentioned above, this cubic is acnodal or smooth.

\item If $P$ lies mostly on a line and a disjoint circle, 
then an inversion in a point not on the line or circle places $P$ mostly on two disjoint circles.

\item If $P$ lies mostly on the disjoint union of two circles, 
we can apply an inversion that maps the two disjoint circles to two concentric circles \cite{B00}*{Theorem 1.7}.

\end{itemize}

So, up to inversions, we need only consider the cases when $P$ lies mostly on an acnodal or smooth circular cubic, 
or on two concentric circles.
We do this in Lemmas~\ref{lem:cir_elliptic} and \ref{lem:2cir} below, which will complete the proof of Theorem~\ref{thm:strong}.

To determine the structure of $P$,
we use a variant of a lemma from additive combinatorics that was used by Green and Tao \cite{GT13}.
It captures the principle that if a finite subset of a group is almost closed under addition, then it is close to a subgroup.
The following statement is Proposition A.5 in \cite{GT13}.

\begin{prop}\label{prop:A5}
Let $K>0$ and let $n$ be sufficiently large depending on $K$.
Let $A$, $B$, $C$ be three subsets of some abelian group $(G,\oplus)$, all of cardinality within $K$ of $n$. Suppose there are at most $Kn$ pairs $(a,b) \in A \times B$ for which $a \oplus b \notin C$. Then there is a subgroup $H \le G$ and cosets $H \oplus x$, $H \oplus y$ such that
\begin{equation*}
|A \tri (H \oplus x)|, |B \tri (H \oplus y)|, |C \tri (H \oplus x \oplus y)| = O(K).
\end{equation*}
\end{prop}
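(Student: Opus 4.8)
\medskip
\noindent\textbf{Proof strategy.}
The plan is to first establish a \emph{qualitative} form of the conclusion --- that $A$ and $B$ each lie within $\eps n$ points of a coset of one common subgroup $H$, for a small absolute constant $\eps$ --- and then to \emph{bootstrap}, by feeding the hypothesis in a second time, to bring every error term down to $O(K)$. Throughout, $n$ is large in terms of $K$, so expressions like $C_0 K$ or $|C|^2/n - n$ are all $o(n)$ and in particular $\le \eps n$.

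For the qualitative step I would first clean up. Since $\sum_{a\in A}|\{b\in B : a\oplus b\notin C\}|\le Kn$ by hypothesis, fixing a large absolute constant $C_0$ makes $A_0:=\{a\in A : |\{b\in B : a\oplus b\notin C\}|\le n/C_0\}$ satisfy $|A\setminus A_0|\le C_0K$, and symmetrically for $B_0$. For any $a,a'\in A_0$ there are at least $|B|-2n/C_0$ elements $b\in B$ with $a\oplus b,\,a'\oplus b\in C$, and since $b\mapsto(a\oplus b,a'\oplus b)$ is injective, $a\ominus a'$ has at least $(1-o(1))n$ representations as a difference of two elements of $C$; the same holds for $b\ominus b'$ with $b,b'\in B_0$ (counting $a\in A$). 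As at most $|C|^2/((1-o(1))n)\le(1+o(1))n$ elements of $G$ admit that many such representations, both $A_0\ominus A_0$ and $B_0\ominus B_0$ lie in one common set $D$ with $|D|\le(1+o(1))n$, while $|A_0|,|B_0|\ge n-O(K)$.

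Now I would apply Kneser's theorem to $E:=A_0\ominus A_0$, with stabiliser $H_A:=\{g\in G : g\oplus E=E\}$. If $A_0$ met two or more cosets of $H_A$, Kneser would give $|E|\ge 2|A_0|-|H_A|$ (so $|H_A|\ge(1-o(1))n$, using $|A_0|\ge n-O(K)$ and $|E|\le|D|\le(1+o(1))n$) and also $|E|\ge 3|H_A|\ge(3-o(1))n$, a contradiction. Hence $A_0$ lies in a single coset $H_A\oplus x$; then $A_0\subseteq H_A\oplus x$ with $|A_0|\ge n-O(K)$ forces $|H_A|\ge n-O(K)$, and $H_A\subseteq E$ with $|E|\le(1+o(1))n$ forces $E=H_A$ and $|H_A|\le(1+o(1))n$. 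Symmetrically $B_0$ lies in a single coset $H_B\oplus y$ with $H_B=B_0\ominus B_0$ and $|H_B|=(1\pm o(1))n$. Since $H_A,H_B\subseteq D$, we get $|H_A\cap H_B|\ge|H_A|+|H_B|-|D|\ge(1-o(1))n$, so $H_A\cap H_B$ has index below $2$ in each of $H_A,H_B$; therefore $H_A=H_B=:H$, and $|A\tri(H\oplus x)|,|B\tri(H\oplus y)|\le\eps n$.

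For the bootstrap, put $X:=A\cap(H\oplus x)$, $Y:=B\cap(H\oplus y)$, so $|X|,|Y|\ge|H|-\eps n$ and $|X|+|Y|>|H|$; hence $X\oplus Y=H\oplus x\oplus y$, and every element of this coset has at least $|H|-2\eps n\ge n/2$ representations $a\oplus b$ with $a\in X,\,b\in Y$. Were such an element outside $C$, all of these would be bad pairs, so (as there are at most $Kn$ bad pairs) $|(H\oplus x\oplus y)\setminus C|\le 2K$; consequently $|H|\le|C|+2K\le n+3K$ and $|C\setminus(H\oplus x\oplus y)|\le|C|-(|H|-2K)\le\eps n$. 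Now any $a\in A$ in a coset $H\oplus x'$ with $x'\ominus x\notin H$ has $a\oplus Y$ consisting of $\ge|H|-\eps n$ elements of $H\oplus x'\oplus y$, at most $\eps n$ of which lie in $C$, so $a$ creates $\ge n/2$ bad pairs; hence $|A\setminus(H\oplus x)|\le 2K$, and then $|(H\oplus x)\setminus A|=|H|-|A\cap(H\oplus x)|=O(K)$, and the same for $B$, and $|C\setminus(H\oplus x\oplus y)|\le|C|-(|H|-2K)=O(K)$ using $|H|\ge|A\cap(H\oplus x)|\ge n-O(K)$. This yields all three bounds. The main obstacle is the qualitative step: the hypothesis only controls an average (at most $Kn$ bad pairs out of $\sim n^2$), so a few points of $A$ or $B$ may behave arbitrarily, yet one must recover an \emph{exact} subgroup after discarding only a small constant fraction of the points --- the ``popular differences of $C$, then Kneser'' route is what makes this work, and tracking both difference sets inside the single set $D$ is what forces $A$ and $B$ onto the same subgroup. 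Once any structure with $o(n)$ error is available, the bootstrap is routine, since each further appeal to the $Kn$-bad-pair bound pins down a deviation of only $O(K)$.
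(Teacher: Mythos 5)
The paper itself does not prove this statement: it is quoted verbatim from Green and Tao \cite{GT13} (their Proposition A.5), so there is no internal proof to compare against, and your proposal should be judged as a self-contained replacement for that citation. On its own terms it is correct. The Markov-type cleaning gives $|A\setminus A_0|,|B\setminus B_0|=O(K)$; the injection $b\mapsto(a\oplus b,a'\oplus b)$ correctly shows every difference of two elements of $A_0$ (or $B_0$) has $(1-o(1))n$ representations as a difference of elements of $C$, and the second-moment count caps the number of such popular differences at $(1+o(1))n$; Kneser's theorem applied to $E=A_0\ominus A_0$ then forces $A_0$ into one coset of the stabiliser $H_A$ (two cosets would give $|E|\ge 3|H_A|\ge(3-o(1))n$ against $|E|\le(1+o(1))n$), and housing both $H_A$ and $H_B$ inside the common popular-difference set $D$ legitimately forces $H_A=H_B$. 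The bootstrap is also sound: each element of $(H\oplus x\oplus y)\setminus C$, and each point of $A$ off $H\oplus x$ (or of $B$ off $H\oplus y$), accounts for at least $n/2$ bad pairs, hence there are $O(K)$ of them, and the bookkeeping $|H|\le|C|+2K$, $|H|\ge|A_0|\ge n-O(K)$ closes all three symmetric-difference bounds. Two minor presentational points: the identity $E=H_A$ uses the (immediate) inclusion $E\subseteq H_A$ coming from $A_0$ lying in a single coset, not merely $H_A\subseteq E$; and the intermediate bound $|C\setminus(H\oplus x\oplus y)|\le\varepsilon n$ already follows with $O(K)$ in place of $\varepsilon n$ from $|H|\ge|A_0|$, as you note at the end. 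The only nontrivial external ingredient is Kneser's theorem, which is standard, so your ``popular differences of $C$, then Kneser, then bootstrap'' route is an acceptable proof of the proposition and is of the same additive-combinatorial flavour as the argument in the appendix of \cite{GT13}.
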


The variant that we need is a simple corollary of Proposition~\ref{prop:A5}.

\begin{corollary}\label{cor:four}
Let $K>0$ and let $n$ be sufficiently large depending on $K$.
Let $A$, $B$, $C$, $D$ be four subsets of some abelian group $(G,\oplus)$, all of cardinality within $K$ of $n$.
Suppose there are at most $Kn^2$ triples $(a,b,c) \in A \times B \times C$ for which $a \oplus b \oplus c \notin D$. Then there is a subgroup $H \le G$ and cosets $H \oplus x$, $H \oplus y$, $H \oplus z$ such that 
\begin{equation*}
|A \tri (H \oplus x)|, |B \tri (H \oplus y)|, |C \tri (H \oplus z)|, |D \tri (H \oplus x \oplus y \oplus z)| = O(K).
\end{equation*}
\end{corollary}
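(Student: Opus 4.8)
The plan is to reduce everything to a single application of Proposition~\ref{prop:A5}, by first fixing a well-chosen value of the third coordinate. Since the number of triples $(a,b,c)\in A\times B\times C$ with $a\oplus b\oplus c\notin D$ is at most $Kn^2$, averaging over $c\in C$ produces some $c_0\in C$ for which the number of pairs $(a,b)\in A\times B$ with $a\oplus b\oplus c_0\notin D$ is at most $3Kn$; here one uses that $|C|\ge n-K>n/3$ for $n$ large, so at least one $c\in C$ lies below the average threshold. Writing $D':=D\ominus c_0$, the triple $A,B,D'$ satisfies the hypotheses of Proposition~\ref{prop:A5} with constant a bounded multiple of $K$ (all three sets have cardinality within $3K$ of $n$), so we obtain a subgroup $H\le G$ and cosets $H\oplus x$, $H\oplus y$ with $|A\tri(H\oplus x)|$, $|B\tri(H\oplus y)|$, $|D'\tri(H\oplus x\oplus y)|$ all $O(K)$. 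Setting $w:=x\oplus y\oplus c_0$, the last bound reads $|D\tri(H\oplus w)|=O(K)$.

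It remains to show that $C$ is close to the coset $H\oplus c_0$, and then to take $z:=c_0$, for which $H\oplus x\oplus y\oplus z=H\oplus w$, so that the required estimate on $D$ is automatic; note $c_0\in H\oplus c_0$, so there is no circularity. For the estimate on $C$ I would double-count the triples $(a,b,c)\in A\times B\times C$ with $a\oplus b\oplus c\in D$: there are at least $|A||B||C|-Kn^2=n^3-O(Kn^2)$ of them. Replacing the target $D$ by $H\oplus w$ costs only $O(Kn^2)$ triples, since for each of the $O(K)$ elements $d\in D\tri(H\oplus w)$ there are at most $|A||B|$ triples with $a\oplus b\oplus c=d$. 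Among the remaining $n^3-O(Kn^2)$ triples with $a\oplus b\oplus c\in H\oplus w$, any triple with $a\in H\oplus x$ and $b\in H\oplus y$ has $c\in(H\oplus w)\ominus(H\oplus x\oplus y)=H\oplus c_0$; the triples with $a\notin H\oplus x$ or $b\notin H\oplus y$ number at most $O(Kn)\cdot|H|=O(Kn^2)$, since there are $O(Kn)$ such pairs $(a,b)$ and, for each, at most $|H|$ choices of $c$ with $a\oplus b\oplus c\in H\oplus w$. Hence at least $n^3-O(Kn^2)$ of the triples have $c\in C\cap(H\oplus c_0)$, and since each value of $c$ occurs in at most $|A||B|\le(n+K)^2$ triples, we get $|C\cap(H\oplus c_0)|\ge n-O(K)$. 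Combining this with $|C|\le n+K$ and $|H|=|H\oplus x|\le n+O(K)$ yields $|C\tri(H\oplus c_0)|=O(K)$, completing the proof.

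I do not expect a genuine obstacle, since the statement really is a corollary, but the point that needs care is that Proposition~\ref{prop:A5} only constrains $A$, $B$ and the ``target'' set $D'$: the location of $C$ is not handed to us and must be recovered by the separate double-counting argument above, rather than by a symmetric second application of the proposition. (One could instead apply Proposition~\ref{prop:A5} a second time in a cyclically permuted way and then reconcile the two resulting subgroups using that they are both $O(K)$-close to $B$, hence equal for $n$ large; but the counting route is cleaner and avoids matching up coset representatives.) As usual, the only other thing to monitor is that every implicit constant remains $O(K)$ and that ``$n$ sufficiently large depending on $K$'' suffices throughout.
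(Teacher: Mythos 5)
Your proof is correct. It departs from the paper's argument in the second half: the paper also begins with a pigeonhole step (on an element $a_0\in A$ rather than on $C$) and an application of Proposition~\ref{prop:A5}, but it then locates the remaining set by a \emph{second} application of Proposition~\ref{prop:A5} (pigeonholing on some $b_0\in B\cap(H\oplus y)$), which produces a second subgroup $H'$ and cosets; the two subgroups are then reconciled by noting that $H\oplus z$ and $H'\oplus z'$ are both $O(K)$-close to $C$, so their intersection is a coset of $H\cap H'$ of size $n-O(K)$, forcing $H=H'$ for large $n$. You instead apply the proposition only once and recover the position of $C$ by the direct double count of triples with $a\oplus b\oplus c\in D$, replacing $D$ by the coset $H\oplus w$ at a cost of $O(Kn^2)$ triples and confining the exceptional pairs $(a,b)$ to $O(Kn)$ with at most $|H|$ completions each — exactly the alternative you flag in your closing remark, and the route the paper chose is the one you set aside. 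The trade-off is genuine but small: your counting argument is self-contained and avoids the coset/subgroup matching (and in particular never has to rule out $H\ne H'$), while the paper's double application keeps all the additive-combinatorial work inside the black box of Proposition~\ref{prop:A5} at the price of that reconciliation step. All your constants stay $O(K)$ as claimed (the averaging threshold $3Kn$, the $|H|\le n+O(K)$ bound, and the final division by $|A||B|\le(n+K)^2$ all hold once $n$ is large relative to $K$), so the argument is complete.
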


\begin{proof}
By the pigeonhole principle, there exists an $a_0 \in A$ such that there are at most $K'n$ (where $K' = O(K)$) pairs $(b,c) \in B \times C$ for which $a_0 \oplus b \oplus c \notin D$, or equivalently $b\oplus c\notin D\ominus a_0$. 
Applying Proposition~\ref{prop:A5}, we have a subgroup $H \le G$ and cosets $H \oplus y$, $H \oplus z$ such that
\begin{equation*}
|B \tri (H \oplus y)|, |C \tri (H \oplus z)|, |(D \ominus a_0) \tri (H \oplus y \oplus z)| = O(K).
\end{equation*}

Since $|B\cap(H\oplus y)|\ge n-O(K)$, we repeat the argument above to obtain $b_0 \in B\cap(H\oplus y)$ such that there are at most $O(Kn)$ pairs $(a,c)\in A\times C$ with $a\oplus b_0\oplus c\notin D$, and Proposition~\ref{prop:A5} gives a subgroup $H' \le G$ and cosets $H' \oplus x$, $H' \oplus z'$ such that
\begin{equation*}
|A \tri (H' \oplus x)|, |C \tri (H' \oplus z')|, |(D \ominus b_0) \tri (H' \oplus x \oplus z')| = O(K).
\end{equation*}

From this, it follows that $|(H \oplus z) \tri (H' \oplus z')| = O(K)$, hence $|(H \oplus z) \cap (H' \oplus z')| \ge n - O(K)$.
Since $(H\oplus z)\cap(H'\oplus z')$ is not empty, it has to be a coset of $H' \cap H$.
If $H'\neq H$, then $|H'\cap H| \le n/2 + O(K)$, a contradiction.
Therefore, $H=H'$ and $H \oplus z = H' \oplus z'$.
So we have $|A \tri (H \oplus x)|, |B \tri (H \oplus y)|, |C \tri (H \oplus z)|, |D\tri(H\oplus x\oplus b_0\oplus z)| = O(K)$.
Since $b_0\in H\oplus y$, we obtain $|D \tri (H \oplus x \oplus y \oplus z)| = O(K)$ as well.
\end{proof}

\begin{lemma}[Circular cubic]\label{lem:cir_elliptic}
Let $K>0$ and let $n$ be sufficiently large depending on $K$.
Suppose $P$ is a set of $n$ points in $\RR^2$ spanning at most $Kn^2$ ordinary generalised circles,
and all but at most $K$ points of $P$ lie on an acnodal or smooth circular cubic $\gamma$.  
Then there is a coset $H \oplus x$ of a subgroup $H\le \gamma^*$, 
with $4x \in H \oplus \omega$,
such that $|P \tri (H \oplus x)| = O(K)$.
\end{lemma}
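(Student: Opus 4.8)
The plan is to transfer the concyclicity condition on $\gamma$ into an approximate additive-closure statement on the abelian group $(\gamma^*,\oplus)$, apply Corollary~\ref{cor:four}, and then extract the constraint $4x\in H\oplus\omega$ from the fact that the (near-)subgroup we find must genuinely be closed under the operation ``four points summing to $\omega$''. By Proposition~\ref{prop:cir_elliptic}, four regular points $a,b,c,d\in\gamma^*$ lie on a common generalised circle precisely when $a\oplus b\oplus c\oplus d=\omega$. Let $P_0=P\cap\gamma^*$, so $|P\tri P_0|=O(K)$ and $|P_0|=n-O(K)$. For a triple $(a,b,c)$ of distinct points of $P_0$, the generalised circle through them meets $\gamma$ in a fourth point $d=\omega\ominus a\ominus b\ominus c$; this circle is ordinary (with respect to $P$) unless $d\in P$ as well (or the circle is tangent, i.e.\ two of $a,b,c,d$ coincide, which happens for only $O(n^2)$ triples). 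Hence: for all but $O(Kn^2)$ triples $(a,b,c)\in P_0^3$ we have $\omega\ominus a\ominus b\ominus c\in P$, and therefore (adjusting by the $O(K)$ symmetric difference) $\omega\ominus a\ominus b\ominus c\in P_0$ for all but $O(Kn^2)$ triples.

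**Next I would** massage this into the hypothesis of Corollary~\ref{cor:four}. Set $A=B=C=P_0$ and let $D=\omega\ominus P_0=\{\omega\ominus p: p\in P_0\}$; the displayed condition says $a\oplus b\oplus c\notin D$ for at most $Kn^2$ triples. All four sets have cardinality within $O(K)$ of $n$. Corollary~\ref{cor:four} then produces a subgroup $H\le\gamma^*$ and cosets $H\oplus x$, $H\oplus y$, $H\oplus z$ with $|P_0\tri(H\oplus x)|$, $|P_0\tri(H\oplus y)|$, $|P_0\tri(H\oplus z)|$, $|D\tri(H\oplus x\oplus y\oplus z)|=O(K)$. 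Since $P_0$ is simultaneously within $O(K)$ of three cosets of $H$, those cosets coincide (two distinct cosets of $H$ are disjoint and each has size $|H|$, forcing $|H|=O(K)$ unless they agree, and $|H|\ge n/2-O(K)$), so $H\oplus x=H\oplus y=H\oplus z$ and thus $|P\tri(H\oplus x)|=O(K)$. This establishes the structural part of the lemma.

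**The final — and most delicate — step** is the arithmetic constraint on $x$. We now know $D=\omega\ominus P_0$ differs in $O(K)$ points from both $H\oplus 3x$ and from $\omega\ominus(H\oplus x)=H\oplus(\omega\ominus x)$ (using $\ominus H=H$). Two cosets of $H$ that have symmetric difference $O(K)$ must be equal (same size/disjointness argument), so $H\oplus 3x=H\oplus(\omega\ominus x)$, i.e.\ $3x\oplus x\ominus\omega\in H$, that is $4x\in H\oplus\omega$, as claimed. Here one must be a little careful that the ``$D$ vs.\ $H\oplus x\oplus y\oplus z$'' conclusion of Corollary~\ref{cor:four} really is the coset $H\oplus 3x$ once we have collapsed $x=y=z$ modulo $H$; this is immediate since $x\oplus y\oplus z\equiv 3x\pmod H$.

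**I expect the main obstacle** to be bookkeeping rather than a genuine difficulty: one must verify that the exceptional triples (tangent circles, i.e.\ repeated points among $a,b,c,d$, and triples with $d\notin\gamma^*$) really number only $O(n^2)$ and not more, and that passing between $P$ and $P_0$, and between the ``$\in P$'' and ``$\in P_0$'' versions of the closure statement, only costs $O(Kn^2)$ bad triples each time — each such adjustment removes a set of triples of the form $\{(a,b,c): a\in E\}$ or similar with $|E|=O(K)$, contributing $O(Kn^2)$. One should also note that the case where $\gamma$ is acnodal and the acnode lies ``near'' $P$ causes no trouble because the acnode is a single point and $\gamma^*$ excludes it; at most one point of $P$ is lost. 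With these checks in place, the combination of Proposition~\ref{prop:cir_elliptic} and Corollary~\ref{cor:four} yields the lemma directly.
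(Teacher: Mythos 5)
Your proposal is correct and follows essentially the same route as the paper: reduce concyclicity to the additive condition $a\oplus b\oplus c\oplus d=\omega$ via Proposition~\ref{prop:cir_elliptic}, apply Corollary~\ref{cor:four} with $A=B=C=P\cap\gamma^*$ and $D=\omega\ominus(P\cap\gamma^*)$, and compare the coset for $D$ with $H\oplus 3x$ to extract $4x\in H\oplus\omega$. The only cosmetic difference is that the paper handles the bookkeeping you flag (circles through off-curve points of $P$) by invoking Lemma~\ref{lemma:stability} to say $P\cap\gamma^*$ itself spans $O(Kn^2)$ ordinary generalised circles, which packages your $O(Kn^2)$ correction terms in one step.
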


\begin{proof}
Let $P' = P \cap \gamma^*$.
Then $|P\tri P'| = O(K)$, and by Lemma~\ref{lemma:stability}, $P'$ spans at most $O(Kn^2)$ ordinary circles. 
If $a$, $b$, $c \in \gamma$ are distinct, then by Proposition~\ref{prop:cir_elliptic}, the generalised circle through $a$, $b$, $c$ meets $\gamma$ again in the unique point $d=\omega\ominus(a\oplus b\oplus c)$. 
This implies that $d \in P'$ for all but at most $O(Kn^2)$ triples $a$, $b$, $c \in P'$,
or equivalently $a\oplus b\oplus c \in \omega \ominus P'$. 
Applying Corollary~\ref{cor:four} with $A = B = C = P'$ and $D = \omega\ominus P'$, we obtain $H\le\gamma^*$ and a coset $H \oplus x$ such that $|P \tri (H \oplus x)| = O(K)$ and 
$|(\omega\ominus P')\tri (H\oplus 3x)|=O(K)$, which is equivalent to $|P \tri (H \ominus 3x \oplus \omega)| = O(K)$. 
Thus we have $|(H \oplus x) \tri (H \ominus 3x \oplus \omega)| = O(K)$, which implies $4x \in H \oplus \omega$.
\end{proof}

\begin{lemma}[Concentric circles]\label{lem:2cir}
Let $K>0$ and let $n$ be sufficiently large depending on $K$.
Suppose $P$ is a set of $n$ points in $\RR^2$ spanning at most $Kn^2$ ordinary generalised circles.
Suppose all but at most $K$ of the points of $P$ lie on two concentric circles, and that $P$ has $n/2 \pm O(K)$ points on each.
Then, up to similarity, $P$ differs in at most $O(K)$ points from an `aligned' or `offset' double polygon.
\end{lemma}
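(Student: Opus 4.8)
The plan is to parametrise the two concentric circles by the group $G = \RR/\ZZ \times \ZZ_2$ described in Section~\ref{sec:groups}, apply Corollary~\ref{cor:four} to conclude that $P$ is close to a coset of a finite subgroup $H \le G$, and then analyse which such cosets actually have $O(Kn^2)$ ordinary generalised circles, matching the answer against the `aligned' and `offset' double polygons of Constructions~\ref{constr:even} and~\ref{constr:offset}. After an initial similarity, assume the two concentric circles are $\sigma_1, \sigma_2$ as in Section~\ref{sec:groups}. Write $P' = P \cap (\sigma_1 \cup \sigma_2)$, so $|P \tri P'| = O(K)$ and, by Lemma~\ref{lemma:stability}, $P'$ spans $O(Kn^2)$ ordinary generalised circles. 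Let $P_1' = P' \cap \sigma_1$ and $P_2' = P' \cap \sigma_2$, each of size $n/2 \pm O(K)$.

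First I would set up the counting. By Proposition~\ref{prop:double-polygon-concyclic}, for distinct $a,b \in \sigma_1$ and $c \in \sigma_2$, the generalised circle through $a,b,c$ meets $\sigma_1 \cup \sigma_2$ again in the unique point $d = \ominus a \ominus b \ominus c \in \sigma_2$; so unless $d \in P_2'$, this circle is ordinary (it contains only $a,b,c$ of $P'$ — I should check there is no fourth point forced on $\sigma_1$, which follows since a generalised circle meets $\sigma_1$ in at most two points). Hence for all but $O(Kn^2)$ triples $(a,b,c) \in P_1' \times P_1' \times P_2'$ we have $a \oplus b \oplus c \in \ominus P_2'$. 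Applying Corollary~\ref{cor:four} with $A = B = P_1'$, $C = P_2'$, $D = \ominus P_2'$ gives a subgroup $H \le G$ and cosets $H \oplus x$, $H \oplus y$, $H \oplus z$ with $|P_1' \tri (H \oplus x)|$, $|P_1' \tri (H \oplus y)|$, $|P_2' \tri (H \oplus z)|$, $|\ominus P_2' \tri (H \oplus 2x \oplus z)|$ all $O(K)$; comparing the first two, $H \oplus x = H \oplus y$, so $x = y$ up to $H$; and comparing the last two, $2x \in H \oplus 2z$.

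Next, I would identify $H$ and the cosets concretely. Since $G \cong \RR/\ZZ \times \ZZ_2$, a finite subgroup $H$ is either cyclic of the form $\langle (1/k, 0) \rangle$ or $\langle (1/k, \bar 1) \rangle$, or of the form $\langle (1/k, 0)\rangle \times \ZZ_2$. Because $|P_1'| \approx |P_2'| \approx n/2$ and $P_1' \subset \sigma_1$ corresponds to the $\eps = 0$ part while $P_2' \subset \sigma_2$ corresponds to $\eps = 1$, the subgroup $H$ must meet both $\eps = 0$ and $\eps = 1$ fibres in roughly $n/2$ elements each; this forces $|H| \approx n$ and $H$ to project onto $\ZZ_2$. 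So $H$ is (up to the finitely many error points) $\{(j/m, 0) : j\} \cup \{(j/m,\bar1): j\}$ or $\{(j/m,0):j\}\cup\{((j+1/2)/m, \bar1): j\}$ for $m = n/2 \pm O(K)$ — these are exactly the coset structures of the `aligned' and the `offset' double polygons. The translate by $x$ then amounts to a rotation, absorbed into the similarity, and the relation $2x \in H \oplus 2z$ is automatically consistent. This pins down $P'$, hence $P$, to within $O(K)$ points of an aligned or offset double polygon.

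The main obstacle I anticipate is the subgroup-classification step: Corollary~\ref{cor:four} produces \emph{some} finite subgroup $H \le \RR/\ZZ \times \ZZ_2$, and I must rule out the `wrong' shapes (e.g.\ $H$ entirely inside $\RR/\ZZ \times \{0\}$, which would put almost all of $P$ on one circle, contradicting the $n/2 \pm O(K)$ split; or $H$ of order $\ll n$, contradicting the coset sizes). This is where the hypothesis that $P$ has $n/2 \pm O(K)$ points on each circle is essential, and the bookkeeping — translating `$H \oplus x$ has $\approx n/2$ points in each $\ZZ_2$-fibre' into `$H$ surjects onto $\ZZ_2$ and $|H| = n \pm O(K)$' — is the part that requires care. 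A secondary point to verify is that, once the coset is in aligned or offset form, it genuinely has only $O(Kn^2)$ ordinary generalised circles rather than more; but this was already computed in Constructions~\ref{constr:even} and~\ref{constr:offset}, so it suffices to invoke Lemma~\ref{lemma:stability} for the $O(K)$ discrepancy.
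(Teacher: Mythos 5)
Your setup and counting are exactly the paper's: parametrise $\sigma_1\cup\sigma_2$ by $\RR/\ZZ\times\ZZ_2$, note that all but $O(Kn^2)$ triples $(a,b,c)\in P_1'\times P_1'\times P_2'$ satisfy $a\oplus b\oplus c\in\ominus P_2'$, and apply Corollary~\ref{cor:four} with $A=B=P_1'$, $C=P_2'$, $D=\ominus P_2'$. But the subgroup-classification step, which you yourself flag as the delicate one, goes wrong. The corollary gives a single subgroup $H$ with \emph{different} cosets $H\oplus x\approx P_1'$ and $H\oplus z\approx P_2'$; since $P_1'$ lies entirely in the $\eps=0$ fibre and $|H\oplus x|=|H|\ge n/2-O(K)$, the coset $H\oplus x$ must lie almost entirely in that fibre, and a coset of $H$ either lies in one fibre (when $H$ is contained in the kernel of the projection to $\ZZ_2$) or is split evenly between the two. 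Hence $H\le\sigma_1$ and $H$ is cyclic of order $m=n/2\pm O(K)$ --- the opposite of your claim that $H$ surjects onto $\ZZ_2$ with $|H|\approx n$. You are conflating $H$ with the union $(H\oplus x)\cup(H\oplus z)$ of the two cosets; the aligned/offset double polygons are indeed subgroups (or cosets) of order $\approx n$ in $G$, but that is the conclusion to be reached, not what Corollary~\ref{cor:four} hands you.

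The second, more serious gap is your dismissal of the constraint relating $x$ and $z$ as ``automatically consistent.'' Comparing $|P_2'\tri(H\oplus z)|=O(K)$ with $|\ominus P_2'\tri(H\oplus 2x\oplus z)|=O(K)$ gives $H\oplus z=H\ominus 2x\ominus z$, i.e.\ $2x\oplus 2z\in H$ (your $2x\in H\oplus 2z$, i.e.\ $2x\ominus 2z\in H$, is a sign slip, and not a harmless one: a rotation acts by $t\mapsto t+c$ on the $\sigma_1$ parameter and $t\mapsto t-c$ on the $\sigma_2$ parameter, so only the sum of the phases is rotation-invariant). This relation is precisely what forces the two regular $m$-gons $H\oplus x\subset\sigma_1$ and $H\oplus z\subset\sigma_2$ to be, up to rotation, either aligned or offset by half a step. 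Without invoking it you can only conclude that $P$ is close to two regular $m$-gons with an \emph{arbitrary} relative twist --- a configuration that is not $O(K)$-close to an aligned or offset double polygon and in fact spans $\Theta(n^3)$ ordinary generalised circles, since for $2x\oplus 2z\notin H$ essentially every pair on one circle plus point on the other yields an ordinary generalised circle. A single similarity cannot normalise the two phases independently, so the lemma's conclusion genuinely depends on this relation; your write-up omits the step where it is used.
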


\begin{proof}
By scaling and rotating, we can assume that $P$ lies mostly on the two concentric circles $\sigma_1 = \{ e^{2\pi i t} : t \in [0, 1) \}$ and $\sigma_2 = \{ re^{-2\pi i t} : t \in [0, 1) \}$, $r>1$, which we gave a group structure in Section \ref{sec:groupsonother}.

Let $P_1= P\cap\sigma_1$ and $P_2=P\cap\sigma_2$.
Then $|P\tri(P_1\cup P_2)|=O(K)$, and by Lemma~\ref{lemma:stability}, $P_1\cup P_2$ spans at most $O(Kn^2)$ ordinary circles.
If $a,b\in \sigma_1$ and $c\in\sigma_2$ with $a\neq b$, then by Lemma~\ref{prop:double-polygon-concyclic}, the generalised circle through $a$, $b$, $c$ meets $\sigma_1 \cup \sigma_2$ again in the unique point $d=\ominus(a\oplus b\oplus c)$.
This implies $d \in P_2$ for all but at most $O(Kn^2)$ triples $(a,b,c)$ with $a,b\in P_1$ and $c \in P_2$. Applying Corollary~\ref{cor:four} with $A = B = P_1$, $C=P_2$ and $D = \ominus P_2$, we get cosets $H \oplus x$ and $H\oplus y$ of $\sigma_1 \cup \sigma_2$ such that $|P_1 \tri (H \oplus x)|, |P_2 \tri (H \oplus y)| = O(K)$ and $2x\oplus 2y\in H$, where $x\in\sigma_1$ and $y\in\sigma_2$.
It follows that $H\le\sigma_1$, hence $H$ is a cyclic group of order $m=n/2\pm O(K)$, and $H\oplus x$ and $H\oplus y$ are the vertex sets of regular $m$-gons inscribed in $\sigma_1$ and $\sigma_2$, respectively, either `aligned' or `offset' depending on whether $x\oplus y\in H$ or not.
\end{proof}
Together these lemmas prove Theorem \ref{thm:strong}.
It just remains to remark that if $P$ differs in $O(K)$ points from a coset on an acnodal circular cubic, 
then we apply inversion in its singularity.
By Proposition \ref{prop:ellipseandacnodal},
we obtain that $P$ differs in $O(K)$ points from a coset $H\oplus x$ of a finite subgroup $H$ of an ellipse, where $4x=o$.
Thus, $x$ is a point of the ellipse with eccentric angle a multiple of $\pi/2$.
After a rotation, we can assume that $x=o$, which is Case~\ref{caseii} of Theorem \ref{thm:strong}.
\hfill$\blacksquare$


\section{Extremal configurations}\label{sec:extremal}
In this section we prove Theorems~\ref{thm:main}, \ref{thm:ordgencircles}, and \ref{thm:orchard}.
We first consider generalised circles.

\subsection{Ordinary generalised circles}

Suppose $P$ is an $n$-point set in $\RR^2$ spanning fewer than $\frac{1}{2} n^2$ ordinary generalised circles, and that $P$ is not contained in a generalised circle. 
Applying Theorem~\ref{thm:strong}, we can conclude that, up to inversions, $P$ differs in $O(1)$ points from one of the following examples: points on a line, 
a coset of a subgroup of an acnodal or smooth circular cubic, 
or a double polygon.

The first type of set is very easy to handle.
Note that the lower bound is on the number of ordinary circles, not counting $3$-point lines.
\begin{lemma}\label{lem:extremal_line}
Let $K\ge 1$ and $n\ge 2K+4$. If all except $K$ points of a set $P \subset \RR^2$ of $n$ points lie on a line, then $P$ spans at least $\binom{n-1}{2}$ ordinary circles.
\end{lemma}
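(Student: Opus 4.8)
The plan is to exploit the fact that a circle meets a line in at most two points, so any circle spanned by $P$ uses at most two of the many collinear points, and therefore at least one of the $K$ off-line points. Write $P = L \cup Q$, where $L$ consists of the (at least $n-K$) points of $P$ on a fixed line $\ell$ and $Q = P \setminus \ell$ has $|Q| = k \le K$. Note $Q \ne \emptyset$ since $P$ is not contained in a line. I would fix a point $q \in Q$ and count ordinary circles through $q$: such a circle meets $\ell$ in at most two points, so it either passes through exactly two points of $L$ and no further point of $P$, or through one point of $L$ and one further point of $Q \setminus \{q\}$, or through two points of $Q\setminus\{q\}$ and none of $L$. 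The dominant contribution comes from circles through $q$ and exactly two points of $L$.

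The key step is this: pick $q \in Q$ and consider, for each \emph{unordered} pair $\{a,b\}$ of distinct points of $L$, the unique circle $\sigma(a,b)$ through $q$, $a$, and $b$ (this circle is genuinely a circle, not a line, since $q \notin \ell$). This circle contains exactly the two points $a,b$ of $L$. It fails to be ordinary only if it contains a third point of $P$, which must then lie in $Q \setminus \{q\}$, i.e. among at most $K-1$ points. A cleaner way to run the count: invert in $q$. Under $I_q$, the line $\ell$ (not through $q$) becomes a circle $\ell'$ through $q$ carrying the image $L'$ of $L$, the set $Q \setminus \{q\}$ maps to a set $Q'$ of $k-1 \le K-1$ points, and ordinary circles of $P$ through $q$ correspond exactly to ordinary lines of $I_q(P \setminus \{q\}) = L' \cup Q'$. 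So it suffices to show that a set of $n-1$ points, of which at least $n-K$ lie on a circle $\ell'$ and at most $K-1$ lie off it, spans at least $\binom{n-1}{2}$ ordinary lines — wait, that over-counts, so instead I count directly: for each of the $\binom{|L|}{2} \ge \binom{n-K}{2}$ pairs in $L'$ on the circle $\ell'$, the line through them meets $\ell'$ in no third point, so it is ordinary unless it passes through a point of $Q'$; each point of $Q'$ lies on at most $|L'|$ such connecting lines, but more usefully, the connecting line of a pair in $L'$ contains at most $|Q'| \le K-1$ extra points, and a crude bound suffices.

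Let me instead present the honest bound. Among circles through $q$ hitting $\ell$ in exactly two points $a, b \in L$: there are $\binom{|L|}{2}$ choices of $\{a,b\}$, each giving a distinct circle (two distinct circles through $q$ meet again in only one point, so they cannot share two common points of $\ell$), hence $\binom{|L|}{2}$ distinct circles, and all but at most $|L| \cdot (K-1)$ of them are ordinary (each of the $\le K-1$ points of $Q \setminus \{q\}$ lies on at most $|L|$ of these circles). Thus the number of ordinary circles is at least
\[
\binom{|L|}{2} - |L|(K-1) \;\ge\; \binom{n-K}{2} - (n-1)(K-1),
\]
and I would finish with the elementary inequality $\binom{n-K}{2} - (n-1)(K-1) \ge \binom{n-1}{2}$ — but this is false as stated, so the final step is to sharpen the over-count: a connecting circle through $q$ and two points of $L$ that contains a \emph{third} point of $P$ contains a point of $Q \setminus \{q\}$, and dealing with these bad pairs more carefully (each point $q' \in Q \setminus\{q\}$ together with $q$ determines, for each $a \in L$, at most one $b \in L$ with $q,q',a,b$ concyclic, so at most $|L|$ bad pairs per $q'$, but many of these coincide) does not obviously give the clean bound.

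The main obstacle is therefore the \emph{exact} constant: getting $\binom{n-1}{2}$ on the nose rather than $\binom{n-1}{2} - O(Kn)$. I expect the right move is to choose $q \in Q$ and count only circles through $q$ and exactly two points of $L$ that are ordinary, but to bound the bad ones by noting that the bad circles through $q$ are precisely those through $q$ and at least two points of $L$ and at least one point of $Q\setminus\{q\}$; fixing $q' \in Q \setminus \{q\}$, the circles through both $q$ and $q'$ form a pencil, and each such circle meets $\ell$ in at most two points, so the pairs of $L$ lying on a common circle with $\{q,q'\}$ partition into the fibres of this pencil — giving at most $\lfloor |L|/2 \rfloor$ bad pairs per $q'$, hence at most $(K-1)\lfloor |L|/2 \rfloor$ bad circles total. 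Then the count becomes $\binom{|L|}{2} - (K-1)\lfloor |L|/2\rfloor \ge \binom{n-K}{2} - (K-1)\lfloor (n-1)/2 \rfloor$, and the hypothesis $n \ge 2K+4$ should make this at least $\binom{n-1}{2}$ after a short calculation. I would verify that final inequality and also separately check the tiny cases where $|L| = n-K$ is small relative to the estimates, which is where the bound $n \ge 2K+4$ is used.
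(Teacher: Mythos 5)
Your setup and your pencil observation are sound, but the proof as proposed does not close: the final inequality you defer, $\binom{n-K}{2}-(K-1)\lfloor (n-1)/2\rfloor \ge \binom{n-1}{2}$, is false for every $K\ge 2$, since already $\binom{n-K}{2}\le\binom{n-2}{2}=\binom{n-1}{2}-(n-2)<\binom{n-1}{2}$. The problem is structural rather than a matter of sharpening the bad-pair estimate: by restricting attention to circles through one fixed off-line point $q$ and two points of $L$, you have only $\binom{|L|}{2}\le\binom{n-K}{2}$ candidate circles, which is already below the target when $K\ge 2$, so no bound on the exceptional circles can rescue a single-$q$ count. (Counting also the circles through $q$ and points of $Q\setminus\{q\}$ does not help in the worst case: one can place the points of $\ell$ in concyclic pairs with two off-line points, creating many $4$-point circles through $q$, so the number of ordinary circles through any one fixed point can be strictly less than $\binom{n-1}{2}$.)

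The missing step is aggregation over all off-line points. Your refined bound is exactly the right ingredient: for fixed $q\in Q$ and $q'\in Q\setminus\{q\}$, the pairs of $L$ concyclic with $\{q,q'\}$ are pairwise disjoint, so each $q$ lies on at least $\binom{n-K}{2}-(K-1)\frac{n-K}{2}$ ordinary circles having two points on $\ell$. Each such ordinary circle contains exactly one point of $Q$ (a circle meets $\ell$ in at most two points), so summing over the $K$ points of $Q$ counts every such circle once and yields at least $K\bigl(\binom{n-K}{2}-(K-1)\frac{n-K}{2}\bigr)=\frac12 K(n-K)(n-2K)$ ordinary circles. It then remains to verify that $\frac12 K(n-K)(n-2K)\ge\binom{n-1}{2}$ for $1\le K\le (n-4)/2$, with equality at $K=1$; this endpoint check is where the hypothesis $n\ge 2K+4$ is used. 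This aggregated argument is precisely the paper's proof, organized slightly differently: the paper fixes a point $p\in\ell$, notes that for each $q\in Q$ at most $K-1$ of the circles through $p$, $q$ and another point of $\ell$ can contain a further point of $Q$, so at least $K(n-2K)$ ordinary circles pass through $p$, and then sums over the $n-K$ points of $\ell$ and divides by two.
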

\begin{proof}
Let $\ell$ be a line such that $|P\cap\ell|=n-K$.
For any $p\in P\cap\ell$ and $q\in P\setminus\ell$ there are at most $K-1$ non-ordinary circles through $p$, $q$, another point on $P\cap\ell$, and another point in $P\setminus \ell$.
Therefore, there are at least $K(n-2K)$ ordinary circles through $p$.
This holds for any of the $n-K$ points $p\in P\cap\ell$, and we obtain at least $\frac12 K(n-2K)(n-K)$ ordinary circles.
It is easy to see that when $1\le K\le (n-4)/2$, $\frac12 K(n-2K)(n-K)$ is minimised when $K=1$.
\end{proof}
Cosets on cubics are also relatively easy to handle.
We again obtain a lower bound on the number of ordinary circles, not including $3$-point lines.

\begin{lemma}\label{lem:extremal_cubic}
Suppose $P \subset \RR^2$ differs in $K$ points from a coset $H\oplus x$ of an acnodal or smooth circular cubic, where $|H|=n\pm O(K)$ and $4x\ominus\omega\in H$.
Then $P$ spans at least $\frac{1}{2}n^2 - O(Kn)$ ordinary circles.
\end{lemma}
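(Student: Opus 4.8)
The plan is to mimic the argument of Lemma~\ref{lem:extremal_line}, but work inside the group $(\gamma^*,\oplus)$ and use the concyclicity criterion of Proposition~\ref{prop:cir_elliptic}. Write $S = H\oplus x$, so $|S\tri P|\le K$, and set $P' = P\cap S$; thus $|P\setminus P'|$ and $|S\setminus P|$ are both $O(K)$. By Proposition~\ref{prop:cir_elliptic}, four distinct points $a,b,c,d\in\gamma^*$ lie on a generalised circle if and only if $a\oplus b\oplus c\oplus d=\omega$. Hence for a \emph{typical} pair $a,b\in P'$, the generalised circles through $a$ and $b$ that contain a third point $c\in P'$ are governed by the equation $c\oplus d=\omega\ominus a\ominus b$ with $c,d\in S$: each unordered pair $\{c,d\}\subset S$ with $c\oplus d$ equal to this fixed value gives one generalised circle through $a,b,c,d$, and all of them are non-ordinary unless one of $c,d$ coincides with one of $a,b$ or with each other.

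The key estimate is therefore: for a fixed typical pair $a,b\in P'$, how many generalised circles through $a,b$ contain a \emph{third} point of $P'$? The number of solutions $\{c,d\}\subset S$ of $c\oplus d = \omega\ominus a\ominus b$ is $O(1)$ (it is at most the number of solutions of a fixed linear equation in the cyclic or cyclic-times-$\ZZ_2$ group $H$, hence bounded by $|H[2]|/2 = O(1)$, using $4x\ominus\omega\in H$ to see the equation is consistent inside $S$). So among all $\binom{|P'|}{2}$ pairs, only $O(n)$ of them lie on a circle through a third point of $P'$; every other pair $a,b$ lies only on ordinary generalised circles as far as $P'$ is concerned, \emph{plus} there may be additional non-ordinary circles through $a,b$ created by the $O(K)$ exceptional points of $P\setminus P'$ — but each exceptional point lies on at most $O(n)$ circles through a fixed pair is too crude; instead, a fixed pair $a,b$ together with two exceptional points determines a circle, giving at most $O(K^2)$ further non-ordinary circles through $a,b$. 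Summing over the $\binom{|P'|}{2} - O(n)$ good pairs and dividing by $\binom{3}{2}=3$ (each ordinary circle is counted by three of its pairs) yields at least $\frac13\bigl(\binom{n}{2} - O(n) - O(K^2)\,\binom{n}{2}/n\bigr)$? — that is not quite $\frac12 n^2$, so the counting needs to be reorganised.

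The cleaner organisation, which I expect to be the actual route, is to count incidences between points and ordinary circles from the point of view of each $p\in P'$: fix $p\in P'$; for all but $O(n)$ choices of a second point $q\in P'$, the pair $p,q$ determines a circle that meets $\gamma$ again in two points $c,d$ which are \emph{generically distinct from $p,q$ and from each other}, and generically not in $P'$ — wait, that would make them ordinary only if $c,d\notin P$, which is typically false since $S\approx P$. The correct statement is the opposite: for a typical triple $p,q,c\in P'$ the fourth point $d=\omega\ominus p\ominus q\ominus c$ \emph{is} in $P'$, so most circles through three points of $P'$ are \emph{4-point} circles, not ordinary. So ordinary circles through $p$ arise precisely from the degenerate cases where $d\in\{p,q,c\}$ or where $d\notin P$ (an exceptional point). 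Counting: the triples $(q,c)$ with $\omega\ominus p\ominus q\ominus c\in\{p,q,c\}$ number $O(n)$ each (a linear condition), and the pairs $q,c$ with $d\in S\setminus P$ number $O(K)\cdot O(n)=O(Kn)$ (for each of the $O(K)$ excluded values of $d$, a linear equation in $q,c$). Hence the number of \emph{circles} through $p$ and two points of $P'$ that meet $\gamma$ in a point $\notin P'$ or tangentially is $\frac12 n^2 - O(Kn)$; all of these are ordinary (they contain exactly three points of $P$, possibly with a tangency counted once) — \emph{this} gives $\frac12 n^2 - O(Kn)$ ordinary circles through $p$, already far more than needed, but summing over $p$ would overcount by $n$, giving only $\frac12 n^2 - O(Kn)$ total, which is exactly the claimed bound once we divide the incidence count $n\cdot(\frac12 n^2)$ by $3$. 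Let me just take \emph{one} point $p$: it already lies on $\frac12 n^2 - O(Kn)$ ordinary circles, so $P$ spans at least $\frac12 n^2 - O(Kn)$ ordinary circles.

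To make the last step rigorous I would: (1) fix any $p\in P'$; (2) consider the map from $\{(q,c)\in (P'\setminus\{p\})^2 : q\neq c\}$, taken up to swapping $q,c$, to the set of generalised circles through $p,q,c$; (3) show that for all but $O(Kn)$ such unordered pairs $\{q,c\}$, the fourth intersection point $d=\omega\ominus p\ominus q\ominus c$ lies in $S\setminus P' \cup \{p,q,c\}$, i.e.\ the circle is ordinary (when $d\in S\setminus P'$) or the circle is tangent to $\gamma$ and still contains only $p,q,c$ from $P$ (when $d\in\{p,q,c\}$); here the bound $O(Kn)$ comes from: $d=p$, $d=q$, or $d=c$ each being a linear equation in $(q,c)\in H^2$ with $O(n)$ solutions, and $d\in S\setminus P$ contributing $\sum_{d_0\in S\setminus P} \#\{(q,c): q\oplus c = \omega\ominus p\ominus d_0\} = O(K)\cdot O(n)$; (4) each ordinary circle obtained contains exactly two points of $P'$ besides $p$, so is counted at most once as an unordered $\{q,c\}$ — actually it is counted $\binom{3}{2}=3$ times over all choices of which two of its three $P$-points play the role of $q,c$, but since $p$ is fixed and must be one of the three points, it is counted exactly once. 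Dividing is unnecessary: we directly get $\binom{|P'\setminus\{p\}|}{2} - O(Kn) = \frac12 n^2 - O(Kn)$ ordinary circles through the single point $p$. \textbf{The main obstacle} is step~(3): ensuring that when $d\in\{p,q,c\}$ (a tangency) the circle genuinely has only three points of $P$ on it — one must rule out a \emph{fourth} distinct point of $P'$ sneaking onto a tangent circle, which cannot happen by Proposition~\ref{prop:cir_elliptic} (a generalised circle meets $\gamma$ in exactly four points counted with multiplicity, so a tangency at $p$ uses up two of the four and leaves room for only $q,c$) — and, symmetrically, controlling the exceptional points of $P\setminus P'$, which could in principle lie on many of these circles; but each exceptional point $e$ lies on a circle through $p,q,c$ only if $e$ is the fourth intersection with $\gamma$, i.e.\ $e\in\gamma$, contradicting $e\notin S\supset\gamma\cap$(relevant part) — or, if $e\notin\gamma$, then $e$ lies on the circle through $p,q,c$ for at most $O(n)$ pairs $\{q,c\}$ (fixing $e$ and $p$, the circle is determined by one more point, so $q$ determines $c$), giving $O(Kn)$ pairs to discard in total. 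Assembling these bounds gives the lemma.
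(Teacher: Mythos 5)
There is a genuine gap, and it is in the final step on which your whole bound rests. You correctly observe (via Proposition~\ref{prop:cir_elliptic}) that for a fixed $p\in P'$ and a pair $\{q,c\}\subset P'$ the fourth intersection point $d=\omega\ominus p\ominus q\ominus c$ typically lies in $P'$, so that the typical circle through $p,q,c$ is a $4$-point circle, and that ordinary circles through $p$ can only come from the degenerate cases $d\in\{p,q,c\}$ (tangency) or $d\in S\setminus P$ (a removed point). You then correctly count these degenerate pairs as $O(n)+O(Kn)$ --- and immediately conclude that $p$ lies on $\tfrac12 n^2-O(Kn)$ ordinary circles. That conclusion contradicts the count you just made: the number of ordinary circles through a single fixed point $p$ is $O(Kn)$, not $\tfrac12 n^2-O(Kn)$. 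Indeed, already in the unperturbed coset (Construction~\ref{constr:cubic}) each point lies on only $O(n)$ ordinary circles (roughly $n/2$ tangent at $p$ itself, plus $O(n)$ tangent at some other point and passing through $p$), while the quadratic total $\tfrac12 n^2-O(n)$ arises only by aggregating the tangency circles over all $n$ possible tangency points. So no ``single $p$'' argument can produce the claimed bound, and your earlier attempt to sum over pairs and divide by $3$, which you abandoned, was the symptom of the same issue: the quadratic count must be global, not per point.

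The repair is essentially the paper's proof. Start from the fact (Construction~\ref{constr:cubic}) that the coset $H\oplus x$ itself spans $\tfrac12 n^2-O(n)$ ordinary circles, all tangent to $\gamma$, namely those given by solutions of $2a\oplus b\oplus c=\omega$ with $a,b,c$ in the coset and distinct. Then prove stability of this family under the $K$ modifications: adding a point $q\notin H\oplus x$ destroys at most $n$ of these circles, because for each $a\in H\oplus x$ at most one circle tangent to $\gamma$ at $a$ passes through $q$; removing a point $p\in H\oplus x$ destroys at most $O(n)$ of them, because the tangent ordinary circles through $p$ correspond to solutions of $2p\oplus q\oplus r=\omega$ or $p\oplus 2q\oplus r=\omega$, of which there are $O(n)$ for fixed $p$. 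Iterating over the at most $K$ added/removed points loses only $O(Kn)$ circles, leaving at least $\tfrac12 n^2-O(Kn)$ ordinary circles in $P$. Several of your auxiliary observations (tangency uses up two of the four intersections, so a tangent circle carries only three points of $\gamma\cap P$; an exceptional point off the curve meets at most one tangent circle per tangency point) are exactly the ingredients needed here, but they must be deployed in this aggregate form rather than from the viewpoint of one fixed point.
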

\begin{proof}
Suppose that $P$ differs in $K$ points from $H\oplus x$.
We know from Construction~\ref{constr:cubic} that $H\oplus x$ spans $\frac{1}{2} n^2 - O(n)$ ordinary circles, all of which are tangent to $\gamma$. 
We show that adding or removing $K$ points destroys no more than $O(Kn)$ of these ordinary circles, so that the resulting set $P$ still spans at least $\frac{1}{2}n^2 - O(Kn)$ ordinary circles.

Suppose we add a point $q \notin H\oplus x$.
For $p \in H\oplus x$, at most one circle tangent to $\gamma$ at $p$ can pass through $q$. 
Thus, adding $q$ destroys at most $n$ ordinary circles. 
Now suppose we remove a point $p \in H\oplus x$. 
Since ordinary circles of $H\oplus x$ correspond to solutions of $2p\oplus q\oplus r = \omega$ or $p\oplus 2q\oplus r = \omega$, 
there are at most $O(n)$ solutions for a fixed $p$.
Thus removing $p$ destroys at most $O(n)$ ordinary circles.

Repeating $K$ times, we see that adding or removing $K$ points to or from $H\oplus x$ destroys at most $O(Kn)$ ordinary generalised circles out of the $\frac{1}{2}n^2 - O(n)$ spanned by $H\oplus x$.
This proves that $P$ spans at least $\frac{1}{2}n^2 - O(Kn)$ ordinary circles.
\end{proof}

From the two lemmas above we know that there is an absolute constant $C$ such that a set of $n$ points, not all collinear or concyclic, spanning at most $\frac{1}{2}n^2 - Cn$ ordinary generalised circles, differs in $O(1)$ points from Case~\ref{caseiv} in Theorem~\ref{thm:strong}. 
This case, 
where $P$ is close to the vertex set of a double polygon, 
requires a more careful analysis of the effect of adding or removing points.

We use the following special case of a result due to Raz, Sharir, and De Zeeuw \cite{RSZ15}.

\begin{prop}\label{prop:rsz}
If $P\subset \RR^2$ is a set of $n$ points contained in two circles,
then the number of lines with at least three points of $P$ is at most $O(n^{11/6})$.
\end{prop}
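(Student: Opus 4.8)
The plan is to reduce the statement to a bound on the number of \emph{collinear triples} determined by $P$, and then to invoke the Elekes--Szabó-type theorem of Raz, Sharir, and De Zeeuw \cite{RSZ15}, the only non-routine point being the verification of a non-degeneracy condition particular to circles.

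First I would use that a line meets a circle in at most two points. Hence a line containing at least three points of $P$ contains at most four points of $P$, and (since three distinct points on a single circle are never collinear) it contains exactly two points on one of the two circles and one on the other, up to the at most two points lying on both circles, which are negligible. Each such line therefore contributes between $1$ and $\binom{4}{3}=4$ collinear triples of points of $P$, so the number of lines with at least three points of $P$ is at most the number of collinear triples, and it suffices to bound by $O(n^{11/6})$ the number of collinear triples with two points on $C_1$ and one on $C_2$; the symmetric count is identical.

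Next I would parametrise $C_1$ and $C_2$ by rational maps from $\RR\cup\{\infty\}$ (for instance stereographic projections), chosen so that no point of $P$ corresponds to $\infty$, and let $A\subseteq\RR$ and $B\subseteq\RR$ be the parameter sets of $P\cap C_1$ and $P\cap C_2$, so $|A|,|B|\le n$. Collinearity of the three parametrised points is the vanishing of a polynomial $F(s_1,s_2,t)$ of bounded degree, symmetric in $s_1,s_2$, that genuinely depends on all three variables. The number of collinear triples in question is $O(|Z(F)\cap(A\times A\times B)|)$, and the main theorem of \cite{RSZ15} gives that this is $O(n^{11/6})$ unless the surface $Z(F)$ has the exceptional ``group-like'' structure of that theorem (locally $F=0$ is equivalent to an additive or multiplicative relation among suitable reparametrisations of the three variables). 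To rule this out, note that for generic $q\in C_2$ with parameter $t_q$ the curve $F(\cdot,\cdot,t_q)=0$ is the graph of the involution $\iota_q$ of $C_1$ sending a point $p$ to the second intersection of the line $\overline{qp}$ with $C_1$. If $Z(F)$ were exceptional, then after reparametrising $C_1$ the relation would take the form $u_1\oplus u_2\oplus g(t)=0$ for a one-dimensional group $\oplus$, which forces every $\iota_q$ either to fix a common point of $C_1$ or to preserve a common pair of points of $C_1$. In the first case the line from $q$ to that point is tangent to $C_1$ for every $q\in C_2$, so $C_2$ is contained in a tangent line of $C_1$; in the second case $q$ is collinear with that fixed pair for every $q\in C_2$, so $C_2$ is contained in the corresponding chord. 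Both contradict $C_2$ being a circle, so $Z(F)$ is not exceptional, the bound $O(n^{11/6})$ applies, and adding the two symmetric contributions completes the argument.

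The main obstacle I anticipate is exactly this last step: showing that the collinearity surface attached to ``two points on one circle and one point on another'' is never of the exceptional Elekes--Szabó type, which is why I would argue it concretely through the family of involutions $\{\iota_q\}_{q\in C_2}$ on $C_1$. Everything else --- the reduction to collinear triples, the choice of parametrisations avoiding points at infinity, and the handling of the at most two points lying on both circles --- is routine.
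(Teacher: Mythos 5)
Your reduction to collinear triples with two points on one circle and one on the other is exactly the paper's, but from there you take a genuinely different and heavier route. The paper does not parametrise anything: it invokes Theorem~6.1 of \cite{RSZ15}, which is already stated for collinear triples on three constant-degree algebraic curves and whose exceptional cases are precisely that $C_1\cup C_2\cup C_3$ is a line or a cubic. With $C_1=\sigma_1$, $C_2=C_3=\sigma_2$ the union of two circles is a quartic, so the non-degeneracy check is one line and the bound $O(n^{11/6})$ follows immediately. Your plan instead applies the general Elekes--Szab\'o theorem of \cite{RSZ15} to an explicit trivariate polynomial and takes on the burden of excluding the special (group-like) form by hand; this buys independence from Theorem~6.1 but nothing else, and it is exactly at that exclusion that your write-up has a gap.

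The gap is the sentence claiming that the exceptional form $u_1\oplus u_2\oplus g(t)=0$ ``forces every $\iota_q$ either to fix a common point of $C_1$ or to preserve a common pair of points of $C_1$.'' That is the entire content of the non-degeneracy verification, and it does not follow in one step: the reparametrisation provided by the exceptional case is only local analytic (on open sets off a one-dimensional exceptional subvariety), and for fixed $q$ the map $u\mapsto c_q-u$ has fixed points that vary with $q$, so no common fixed pair is visible directly. What one must argue is that in the exceptional case the compositions $\iota_q\circ\iota_{q'}$ become translations in the local coordinate, hence commute as M\"obius transformations of $C_1$ (identified with $\PP^1$; local agreement of analytic maps extends globally), hence share a common fixed-point set, which may be a pair of complex conjugate points of the complexified circle; and that each $\iota_q$ normalises this commuting family and therefore preserves that pair. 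Only then does your dichotomy apply, and the complex case must be treated as well (the chord of a conjugate pair is a real line, so the contradiction with $C_2$ being a circle survives; the common-tangency case likewise). A smaller issue: the naive collinearity determinant vanishes identically when $s_1=s_2$, so your $F$ is divisible by $s_1-s_2$ and is neither irreducible nor ``genuinely dependent on all three variables''; you must divide out this factor (its zero set only produces triples with repeated points, which you exclude anyway) before applying the theorem. With these repairs your argument is correct, but the paper's appeal to Theorem~6.1 of \cite{RSZ15} avoids all of it.
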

\begin{proof}
Denote the two circles by $\sigma_1$ and $\sigma_2$.
We use \cite{RSZ15}*{Theorem 6.1}, which states that for (not necessarily distinct) algebraic curves $C_1,C_2,C_3$ of constant degree, and finite sets $S_i\subset C_i$, 
the number of collinear triples $(p_1,p_2,p_3)\in S_1\times S_2\times S_3$, with $p_1,p_2,p_3$ distinct, is bounded by $O(|S_1|^{1/2}|S_2|^{2/3}|S_3|^{2/3}+|S_1| + |S_1|^{1/2}|S_2|+|S_1|^{1/2}|S_3|)$,
unless $C_1\cup C_2\cup C_3$ is a line or a cubic.
Let $C_1=\sigma_1$ and $C_2=C_3=\sigma_2$. 
Set $S_i = P\cap C_i$ for $i=1,2,3$.
Every line with at least one point of $S_1$ and two points of $S_2=S_3$ corresponds to a collinear triple in $S_1\times S_2\times S_3$.
Since the union of two circles is not a line or a cubic, 
we can apply the theorem to get the bound $O(n^{11/6})$ for the number of collinear triples in $P$ with one point in $\sigma_1$ and two points in $\sigma_2$.
Similarly, the number of collinear triples in $P$ with one point in $\sigma_2$ and two points in $\sigma_1$ is also $O(n^{11/6})$.
Since a line intersects $\sigma_1\cup\sigma_2$ in at most four points, we also obtain the bound $O(n^{11/6})$ for the number of lines with at least three points.
\end{proof}

\begin{lemma}\label{lem:add/remove}
Let $S$ be a double polygon with $m$
points on each circle. 
Let $P = (S \setminus A) \cup B$ be a set of $n$ points, where $A$ is a subset of $S$ with $a = O(1)$ points and $B$ is a set disjoint from $S$ with $b = O(1)$ points. 
Then $P$ spans at least $\frac18 (2 + a + 4b)n^2 - O(n^{11/6})$ ordinary generalised circles.
\end{lemma}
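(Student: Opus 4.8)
The plan is to reduce the analysis to an explicit count of how adding or removing a single point affects the concyclicity structure of the double polygon, using Proposition~\ref{prop:double-polygon-concyclic} to reduce concyclicity on $\sigma_1\cup\sigma_2$ to an equation in the group $\RR/\ZZ\times\ZZ_2$, and then invoking Proposition~\ref{prop:rsz} to control the only genuinely exceptional generalised circles, namely straight lines, which are not captured by the group-theoretic count. So first I would fix the group structure of Construction~\ref{constr:even}, recall that the regular $m$-gon vertex sets are cosets of the cyclic subgroup of order $m$ inside $\sigma_1$, and observe that, away from $O(n^{11/6})$ lines, every generalised circle through three points of $S\cup B$ is an honest circle meeting $\sigma_1\cup\sigma_2$ in four points (with tangency counting as a double point), so its concyclicity is governed by $a\oplus b\oplus c\oplus d=o$ with the $\eps$-coordinates summing to $0$ modulo $2$; in particular a circle hits each of $\sigma_1,\sigma_2$ in an even number of points, so the relevant configurations are $2{+}2$ or $3{+}1$ or $4{+}0$.

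Next I would count the contribution of removing a point $p\in S$, say $p\in\sigma_1$. Before removal, the ordinary generalised circles of $S$ through $p$ come in two flavours: those of the form $\{p,q,r\}$ with one of $q,r$ on $\sigma_2$ forced by $p\oplus q\oplus r\oplus(\ominus(p\oplus q\oplus r))=o$ where the fourth point coincides with one of the three (a tangency), and the genuinely $4$-point circles of $S$ through $p$ that become ordinary once $p$ is deleted. A straightforward count in $\ZZ_m$ — the same kind of count that produced the $m(m-2)$ or $m(m-1)$ figures in Construction~\ref{constr:even} — shows that removing $p$ destroys $O(n)$ old ordinary circles but \emph{creates} roughly $\tfrac18 n^2+O(n)$ new ones, coming from the $4$-point circles of $S$ incident to $p$; this accounts for the coefficient $a$ in $\tfrac18(2+a+4b)n^2$. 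Then I would count the contribution of adding a point $q\notin S$: for each of the $\approx n^2/8$ triples from $S\cup B$ that were concyclic (lying on a $4$-point or higher circle), the circle through those three points passes through $q$ for at most $O(1)$ of them since $q$ is a fixed point and a circle is determined by three points; hence adding $q$ creates $\approx\tfrac12 n^2+O(n)$ new ordinary circles (each $4$-point circle of $S$ not through $q$ becomes ordinary-plus-$q$... wait, it becomes a $5$-point circle, so actually it is the $3$-point circles that matter). More carefully: the $4$-point circles of $S$ through exactly... I would argue that the number of circles meeting $S$ in exactly three points and not through $q$ is $\approx\tfrac12n^2$ — these become genuinely ordinary in $(S\cup\{q\})\setminus(\text{nothing})$ only if $q$ does not lie on them, which holds for all but $O(n)$ — giving the coefficient $4$ on $b$ after bookkeeping over the $b$ added points. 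Assembling the single-point estimates additively over the $a$ removed and $b$ added points (which only changes lower-order terms since $a,b=O(1)$), and subtracting the $O(n^{11/6})$ term to absorb all straight-line generalised circles and all tangency corrections, yields the claimed bound $\tfrac18(2+a+4b)n^2-O(n^{11/6})$.

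The main obstacle I expect is the precise bookkeeping of \emph{which} generalised circles of $S$ or of $P$ are ordinary versus $4$-point versus tangent, and avoiding double-counting when a created ordinary circle could be accounted for by both a removal and an addition. In particular one must be careful that removing $p$ and adding $q$ are handled so that the lower bound is genuinely additive: a circle through $p$ and $q$ and two other points of $S$ is a $4$-point circle of the original $S$, becomes a $3$-point circle of $(S\setminus\{p\})\cup\{q\}$, and should be credited exactly once. The cleanest way around this is to process the modifications one point at a time, at each step bounding below the number of ordinary generalised circles created minus those destroyed by a single insertion or deletion, using at each step Proposition~\ref{prop:double-polygon-concyclic} for the current point set (which still lies mostly on the two circles) and Proposition~\ref{prop:rsz} to discard lines; since each of the $a+b$ steps changes the count by $\tfrac18 n^2$ or $\tfrac12 n^2$ up to $O(n)$, and there are $O(1)$ steps, the errors remain $O(n^{11/6})$ throughout, and the constant $\tfrac18(2+a+4b)$ emerges from summing $\tfrac18$ for the baseline double polygon, $\tfrac18 a$ for the $a$ deletions, and $\tfrac12 b = \tfrac18\cdot 4b$ for the $b$ insertions.
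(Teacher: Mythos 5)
Your overall decomposition --- baseline $\tfrac14 n^2$ for the double polygon, roughly $+\tfrac18 n^2$ per deleted point, $+\tfrac12 n^2$ per added point, processed one modification at a time --- is the same scheme as the paper's, and your treatment of deletions is essentially correct. The genuine gap is in the insertion step. The new ordinary generalised circles created by adding $q\notin\sigma_1\cup\sigma_2$ are the circles through $q$ and \emph{exactly two} points of $S\setminus A$: one counts the $\binom{n}{2}$ pairs and must discard those pairs whose circle through $q$ picks up a third or fourth point of $S\setminus A$. Circles through $q$ and exactly three points of $S$ are cheap ($O(n)$, by the tangency argument), but circles through $q$ and \emph{four} points of $S$ are the real problem: $S$ spans $\Theta(n^3)$ four-point generalised circles (not $\approx n^2/8$ concyclic triples, as you write), and your claim that the circle through a concyclic triple ``passes through $q$ for at most $O(1)$ of them since a circle is determined by three points'' does not follow --- nothing a priori prevents $q$ from lying on as many as $\Omega(n^2)$ of these circles, which would swamp the $\tfrac12 n^2$ gain. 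This is exactly where Proposition~\ref{prop:rsz} must enter, and not in the way you propose: one inverts in $q$, so that the $4$-point circles of $S$ through $q$ become lines with at least three points of a set lying on two circles, and Proposition~\ref{prop:rsz} bounds their number by $O(n^{11/6})$.

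By contrast, the straight lines spanned by $S$ itself need no special treatment: they are generalised circles, are being counted (this is a lower bound), and are captured by the group law of Proposition~\ref{prop:double-polygon-concyclic} just like circles, so using Proposition~\ref{prop:rsz} ``to discard lines'' misplaces its role. Your closing assertion that each step changes the count ``up to $O(n)$'' is symptomatic of the same gap: the $O(n^{11/6})$ error in the statement of the lemma arises precisely from the $4$-point circles of $S$ through each added point. (Minor slips: the number of circles meeting $S$ in exactly three points is $\approx\tfrac14 n^2$, not $\tfrac12 n^2$, and in any case these are not the circles newly created by adding $q$ --- the $\tfrac12 n^2$ per insertion comes from the pair count $\binom{n}{2}$.)
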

\begin{proof}
We know from Constructions~\ref{constr:even} and~\ref{constr:offset} that $S$ spans $\frac14 n^2 - O(n)$ ordinary generalised circles.

Consider first the number of ordinary generalised circles spanned by $S \setminus A$. 
As we saw in Construction~\ref{constr:odd}, 
removing a point $p \in S$ destroys at most $3m/2$ ordinary generalised circles spanned by $S$, and adds $\frac12 m^2 - O(m) = \frac18 n^2 - O(n)$ ordinary generalised circles. 
Noting that there are at most $m$ $4$-point generalised circles spanned by $S$ that go through any two given points of $A$, we thus have by inclusion-exclusion that $S \setminus A$ determines at least $(\frac14+\frac{a}{8})n^2 - O(n)$ ordinary generalised circles.

Now consider adding $q\in B$ to $S$. 
For any pair of points from $S\setminus A$,
adding $q\in B$ creates a new ordinary generalised circle,
unless the generalised circle through the pair and $q$ contains three or four points of $S\setminus A$.
We already saw that the number of ordinary generalised circles hitting a fixed point is $O(n)$, so it remains to bound the number of $4$-point generalised circles of $S$ that hit $q$.
If $q$ lies on one of the concentric circles, then no $4$-point generalised circles hit $q$, so we can assume that $q$ does not.
Applying inversion in $q$ reduces the problem to bounding the number of $4$-point lines determined by a subset of two circles.
By Proposition \ref{prop:rsz},
this number is bounded by $O(n^{11/6})$,
so $p$ lies on at most $O(n^{11/6})$ of the $4$-point generalised circles spanned by $S$.
Adding $q$ to $S$ thus creates at least $\binom{n}{2} - O(n^{11/6})$ ordinary generalised circles. 
Note that each $p \in A$ that was removed destroys at most $n$ of these circles. 

Adding $q$ to $S\setminus A$ also destroys at most $O(n)$ ordinary circles, 
since for each $p\in S$ there is only one circle tangent at $p$ and going through $q$, 
and for each $p \in A$, at most $m$ ordinary circles spanned by $S \setminus A$ go through $p$. 
Finally, since there are at most $2m$ circles through two points of $B$ that also go through two points of $S \setminus A$, $P = (S \setminus A) \cup B$ spans at least $(\frac14+\frac{a}{8}+\frac{b}{2})n^2 - O(n^{11/6})$ ordinary generalised circles.
\end{proof}

Theorem~\ref{thm:ordgencircles} then follows easily from the lemmas above.

\begin{proof}[Proof of Theorem~\ref{thm:ordgencircles}]
Suppose that $P$ is a set of $n$ points in $\RR^2$ with fewer than $\frac12n^2-Cn$ ordinary generalised circles, where $C$ is sufficiently large.
Without loss of generality, $n$ is also sufficiently large.
By Lemmas~\ref{lem:extremal_line} and \ref{lem:extremal_cubic}, 
we need only consider the case where $P$ differs by $O(1)$ points from a double polygon.
In the notation of Lemma~\ref{lem:add/remove}, 
we have $P = (S \setminus A) \cup B$ and $\frac{1}{8} (2+a+4b) < \frac12$, which implies that $a \le 1$ and $b = 0$. 
So $P$ is either equal to $S$, or is obtained from $S$ by removing one point, 
which are exactly the cases in Constructions~\ref{constr:even}, \ref{constr:offset}, and~\ref{constr:odd}.
In particular, the minimum number of ordinary generalised circles occurs in Construction~\ref{constr:even} when $n\equiv0\pmod{4}$, in Construction~\ref{constr:odd} when $n\equiv1,3\pmod{4}$, and in Constructions~\ref{constr:even} and \ref{constr:offset} when $n\equiv2\pmod{4}$.
\renewcommand{\qedsymbol}{$\blacksquare$}
\end{proof}


\subsection{Ordinary circles}\label{sec:circles}

We now consider what happens if we do not count generalised circles that are lines, 
and prove Theorem~\ref{thm:main}.
\begin{proof}[Proof of Theorem~\ref{thm:main}]
Let $P$ be a set of $n$ points not all on a line or a circle, with at most $\frac12 n^2-Cn$ ordinary circles, for a sufficiently large $C$.
By a simple double counting argument, there are at most $\frac{1}{6}n^2$ $3$-point lines,
so there are at most $\frac23n^2 - O(n)$ ordinary generalised circles.
By Theorem~\ref{thm:strong}, up to inversions and up to $O(1)$ points, $P$ lies on a line, an ellipse, a smooth circular cubic, or two concentric circles.
By Lemmas~\ref{lem:extremal_line} and \ref{lem:extremal_cubic}, the first three cases give us at least $\frac12n^2-O(n)$ ordinary circles, contrary to assumption.
Therefore, 
we only need to consider the case where, when $P$ is transformed by an inversion to $P'$,
we have $P'=(S\setminus A)\cup B$,
where $S$ is a double polygon (`aligned' or `offset'),
and $|A|=a$, $|B|=b$.

By Lemma~\ref{lem:add/remove}, $P'$ has at least $\frac18(2+a+4b)n^2-O(n^{11/6})$ ordinary generalised circles, which gives us the inequality $\frac18 (2+a+4b) < \frac23$, 
which in turn gives us $a\le 3$ and $b=0$.
Therefore, $P'$ lies on two concentric circles, and $P$ lies on the disjoint union of two circles or the disjoint union of a line and a circle.

Suppose that $a=3$ (and $b=0$).
Then $P'$ has $\frac58n^2-O(n)$ ordinary generalised circles.
Those passing through the centre of the inversion that transforms $P$ to $P'$, are inverted back to straight lines passing through three points of $P$.
As in the proof of Lemma~\ref{lem:add/remove}, there are $\frac18n^2 - O(n)$ ordinary generalised circles that pass through any point of $A$.
Also, we can use Lemma~\ref{lemma:tangent} below to show that there are at most $O(n)$ ordinary generalised circles spanned by $S\setminus A$ that intersect in the same point not in $S$.
Indeed, by Lemma~\ref{lemma:tangent}, there are at most $n/2$ ordinary generalised circles of $S$ that intersect in the same point $p\notin S$.
Furthermore, for each point $q\in A$ there are $O(n)$ generalised circles through $p$, $q$, and two more points of $S$.
It follows that there are $O(n)$ ordinary generalised circles spanned by $S\setminus A$ through $p$.

Thus, if the centre of inversion is in $A$, $P$ has $\frac12n^2-O(n)$ ordinary circles, which is a contradiction if $C$ is chosen large enough.
On the other hand, if the centre of inversion is not in $A$, then $P$ has $\frac58n^2-O(n)$ ordinary circles, also a contradiction.

Therefore, we have $a\le 2$, which means that $P'$ is a set of $n$ points as in Constructions~\ref{constr:even}, \ref{constr:offset}, \ref{constr:odd}, or \ref{constr:linecircle}.

Next, suppose that $n$ is even.
If $a=2$, then there are $\frac12n^2-O(n)$ ordinary generalised circles and through both points of $A$ there are $\frac18n^2 - O(n)$ ordinary generalised circles.
If we invert in one of these points in $A$, we obtain a set with $\frac38n^2-O(n)$ ordinary circles (as in Construction~\ref{constr:linecircle}), which is not extremal.
Otherwise, $a=0$, $P'$ is as in Constructions~\ref{constr:even} or \ref{constr:offset}, and there are at least $\frac14n^2-n$ ordinary generalised circles if $n\equiv0\pmod{4}$ and $\frac14n^2-\frac12 n$ if $n\equiv2\pmod{4}$.
Let $p$ be the centre of the inversion that transforms $P$ to $P'$.
Then all the $3$-point lines of $P$ are inverted to ordinary circles in the double polygon $P'$, all passing through $p$.
By Lemma~\ref{lemma:tangent} below, there are at most $n/2$ ordinary circles that intersect in the same point not in $P'$.
Thus, in $P$ there at most $n/2$ $3$-point lines,
and the number of ordinary circles (not including lines) is at least $\frac14n^2-\frac32n$ if $n\equiv0\pmod{4}$ and $\frac14n^2-n$ if $n\equiv2\pmod{4}$, which match
Construction~\ref{constr:even} (and Construction~\ref{constr:offset} if $n\equiv2\pmod{4}$), if the radii are chosen so that each vertex of the inner polygon has an ordinary generalised circle that is a straight line tangent to it.

Finally, suppose that $n$ is odd.
Then $a=1$ and $P'$ is as in Construction~\ref{constr:odd}, with $\frac38n^2 - O(n)$ ordinary generalised circles.
It follows that $P$ must be as in Construction~\ref{constr:linecircle}, with $\frac{1}{4}n^2-\frac{3}{4}n+\frac{1}{2}$  ordinary circles if $n\equiv 1\pmod{4}$ and $\frac{1}{4}n^2-\frac{5}{4}n+\frac{3}{2}$ ordinary circles if $n\equiv 3\pmod{4}$.
This finishes the proof.
\renewcommand{\qedsymbol}{$\blacksquare$}
\end{proof}

\begin{lemma}\label{lemma:tangent}
Let $S$ be a double polygon (`aligned' or `offset') with $m$ points on each circle. Then a point $q \notin S$ lies on at most $m$ ordinary generalised circles spanned by $S$.
\end{lemma}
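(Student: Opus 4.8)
The plan is to set up coordinates as in Constructions~\ref{constr:even} and~\ref{constr:offset}, identify $\RR^2$ with $\CC$, and use the group structure on $\sigma_1 \cup \sigma_2$ from Section~\ref{sec:groupsonother}, where $\sigma_1$ is the unit circle and $\sigma_2$ the circle of radius $r$. First I would handle the case of an ordinary generalised circle that is in fact a straight line: since a line meets $\sigma_1 \cup \sigma_2$ in at most four points, and any ordinary line through $q$ contains two points of $S$, one checks directly that the number of such lines through a fixed $q$ is $O(m)$ — in fact this is subsumed by the circle count below, so I would not dwell on it. The substantive case is an ordinary \emph{circle} through $q$. Every ordinary circle of $S$ has, by Proposition~\ref{prop:double-polygon-concyclic}, one of two shapes: two points on $\sigma_1$ and one on $\sigma_2$ (a repeated point allowed, giving a tangency), or two points on $\sigma_2$ and one on $\sigma_1$. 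These are governed by the equation $a \oplus b \oplus c \oplus d = o$ for the fourth intersection point $d$, and `ordinary' forces $d$ to coincide with one of $a,b,c$, i.e.\ $2a \oplus b \oplus c = o$ (with $a$ the repeated point) in the appropriate mixed pattern on the two circles.

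The key step is the following counting argument. Suppose $q \notin S$ lies on an ordinary circle $C$ of $S$; by the above, $C$ is tangent to $\sigma_1$ at some point $a \in S_1$ and passes through a further point $c \in S_2$ (or the symmetric situation with the roles of $\sigma_1,\sigma_2$ swapped). The crucial observation is that the tangent point $a$, together with $q$, determines $C$ completely: there is a unique circle tangent to $\sigma_1$ at $a$ and passing through $q$ — this is a standard fact, since requiring tangency to $\sigma_1$ at $a$ pins down the tangent direction at $a$, and a circle is determined by a point, a tangent direction there, and one more point. Hence distinct ordinary circles of $S$ through $q$ of this first type correspond injectively to distinct tangent points $a \in S_1$, of which there are $m$. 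Symmetrically, ordinary circles through $q$ tangent to $\sigma_2$ correspond injectively to their tangent point in $S_2$; but I would want to combine these without double-counting the bound, so I would argue that $q$ lies on at most $m$ ordinary circles of $S$ \emph{in total} — this needs a touch of care because a priori we could have up to $m$ of each type. Resolving this is the main obstacle: one possible fix is to note that a circle tangent to $\sigma_1$ internally vs.\ tangent to $\sigma_2$ are distinguished by which side $q$ lies on relative to the configuration, or more robustly, to recount: since $C$ is an ordinary circle tangent to one of $\sigma_1, \sigma_2$ and meeting the other in exactly one of the $m$ points, and $C$ is determined by its tangency point, the set of ordinary circles of $S$ through $q$ injects into $S_1 \sqcup S_2$; combined with the concyclicity equation $2a \oplus b \oplus c = o$ (here the two free points $b = a$ and $c$ on the other circle must satisfy $2a \oplus 2c = o$ when a point of $\sigma_2$ is doubled, etc.), for \emph{fixed} $q$ the constraint linking $a$ to $c$ cuts the count down to $m$.

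Concretely, I expect the cleanest route is: fix $q$; an ordinary circle of $S$ through $q$ with doubled point $a$ (say on $\sigma_1$) and single point $c$ on $\sigma_2$ satisfies $2a \oplus 2c = o$ in $\RR/\ZZ \times \ZZ_2$, which in the first coordinate reads $2\theta_a + 2\theta_c \equiv 0 \pmod 1$ with the angles among the $m$-th roots; but also $q$ determines the circle from $a$ alone, so $c$ is a \emph{function} of $a$, giving at most $m$ choices of $a$ hence at most $m$ circles; the symmetric count (doubled point on $\sigma_2$) produces circles through $q$ that, being tangent to $\sigma_2$ rather than $\sigma_1$, are distinct, but one shows that $q$ cannot simultaneously see $m$ of each — e.g.\ because the map ``circle through $q$ $\mapsto$ its second intersection with $\sigma_1$'' is at most $2$-to-$1$ off a controlled set, so the honest bound on the union is $m$ after a parity/consistency check. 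I would present this carefully, since bookkeeping here is exactly what Constructions~\ref{constr:even}--\ref{constr:odd} were counting, and the bound of $m$ (not $2m$) is what is needed for the ``$\frac14n^2 - \frac32n$'' and ``$\frac14 n^2 - \frac 34 n + \frac12$'' exact values in Theorem~\ref{thm:main} to come out, so it should be treated as the heart of the lemma.
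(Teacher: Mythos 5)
Your first step is sound and matches the paper's starting point: every ordinary generalised circle of $S$ is tangent to $\sigma_1$ or $\sigma_2$ at a point of $S$, and for a fixed $q$ there is at most one generalised circle through $q$ tangent to $\sigma_i$ at a given point $p$, so the ordinary generalised circles through $q$ inject into the set of possible tangency points. But this only gives the bound $2m$ ($m$ tangency points on each circle), and the entire content of the lemma is the improvement to $m$. You explicitly flag this as ``the main obstacle'' and then do not close it: the suggested fixes (a ``parity/consistency check'', a map that is ``at most $2$-to-$1$ off a controlled set'', distinguishing circles ``by which side $q$ lies on'') are not arguments, and the algebraic relation you invoke is off --- a circle satisfying $2a\oplus 2c=o$ is tangent to \emph{both} $\sigma_1$ and $\sigma_2$ and so contains only two points of $S$, hence is not ordinary; for a circle tangent at $a\in\sigma_1$ the relevant relation is $2a\oplus c\oplus d=o$ with $c\neq d$ on $\sigma_2$, and for fixed $q$ this does not make the other intersection points a function of $a$ in any way that reduces the count. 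So there is a genuine gap exactly where you located it.

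The paper closes it by a case analysis on the position of $q$. If $q$ lies on $\sigma_1$ or $\sigma_2$, no ordinary generalised circle of $S$ passes through it. If $q$ lies inside $\sigma_1$ (resp.\ outside $\sigma_2$), then no ordinary generalised circle tangent to $\sigma_1$ (resp.\ to $\sigma_2$) can reach $q$, since such a circle stays in the closed exterior of $\sigma_1$ (resp.\ interior of $\sigma_2$); so only one of the two families survives and the injection already gives $m$. The substantive case is $q$ in the open annulus, where both families are a priori available. There the paper considers the four circles through $q$ tangent to both $\sigma_1$ and $\sigma_2$; any ordinary generalised circle through $q$ tangent to $\sigma_1$ and meeting $\sigma_2$ twice must touch $\sigma_1$ on one of two specific open arcs determined by these bitangent circles, and symmetrically for tangency to $\sigma_2$. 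The four admissible arcs have total angular measure $2(\theta+\varphi)<2\pi$ (with $\theta,\varphi$ the central angles of the arc pairs), so since $S$ consists of $m$ equally spaced points on each circle, the number of admissible tangency points on $\sigma_1\cup\sigma_2$ combined is at most $m$; together with the uniqueness of the tangent circle through $q$ at each such point this yields the bound $m$. Some such quantitative geometric input bounding the \emph{union} of the two families is what your proposal is missing.
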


\begin{proof}
Denote the inner circle by $\sigma_1$ and the outer circle by $\sigma_2$, both with centre $o$.
We proceed by case analysis on the position of $q$ with respect to $\sigma_1$ and $\sigma_2$. Note that for each point $p \in S$, at most one of the ordinary generalised circles tangent at $p$ can go through $q$. 

If $q$ lies on either $\sigma_1$ or $\sigma_2$, then $q$ does not lie on any ordinary generalised circle spanned by $S$.

If $q$ lies inside $\sigma_1$, then $q$ lies on at most $m$ ordinary generalised circles spanned by $S$, since ordinary generalised circles tangent to $\sigma_1$ cannot pass through $q$.
Similarly, if $q$ lies outside $\sigma_2$, it lies on at most $m$ ordinary generalised circles, since ordinary generalised circles tangent to $\sigma_2$ lie inside $\sigma_2$.

The remaining case to consider is when $q$ lies in the annulus bounded by $\sigma_1$ and $\sigma_2$.
Consider the subset $S' \subset S$ of points $p$ such that there exists an ordinary generalised circle tangent at $p$ going through $q$.
Consider the four circles passing through $q$ and tangent to both $\sigma_1$ and $\sigma_2$.
They touch $\sigma_1$ at $a_1,b_1,c_1,d_1$ and $\sigma_2$ at $a_2,b_2,c_2,d_2$ as in Figure~\ref{fig:bitangent_circles}.
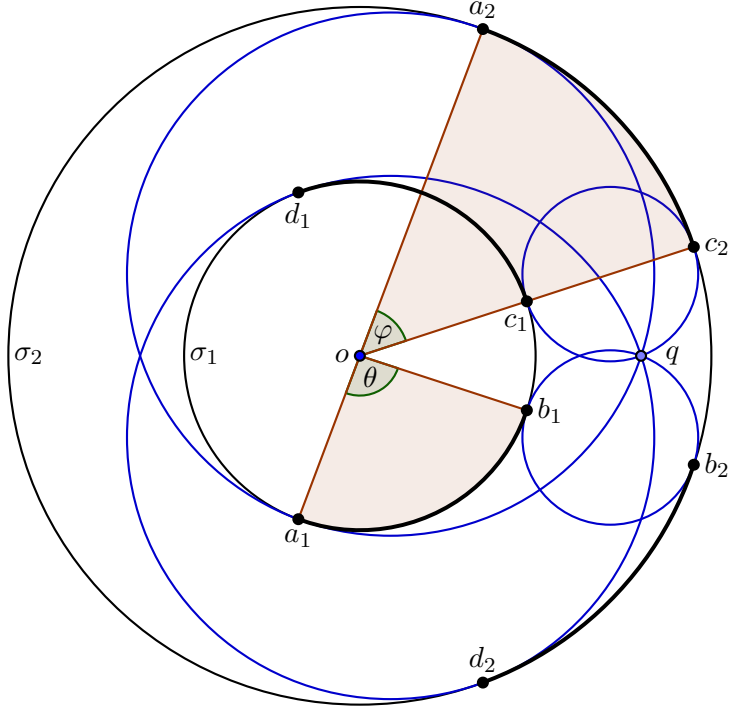
\begin{figure}
\centering
\definecolor{qqwuqq}{rgb}{0.,0.39215686274509803,0.}
\definecolor{zzttqq}{rgb}{0.6,0.2,0.}
\definecolor{uuuuuu}{rgb}{0,0,0}
\definecolor{qqqqcc}{rgb}{0.,0.,0.8}
\definecolor{xdxdff}{rgb}{0.49019607843137253,0.49019607843137253,1.}
\definecolor{yqyqyq}{rgb}{0.5019607843137255,0.5019607843137255,0.5019607843137255}
\definecolor{qqqqff}{rgb}{0.,0.,1.}
\begin{tikzpicture}[line cap=round,line join=round,>=triangle 45,thick,x=1.0cm,y=1.0cm,scale=0.925]
\draw [shift={(5.,0.5)},color=qqwuqq,fill=qqwuqq,fill opacity=0.1] (0,0) -- (18.194872338766768:0.69) arc (18.194872338766768:69.51268488527734:0.69) -- cycle;
\draw [shift={(5.,0.5)},color=qqwuqq,fill=qqwuqq,fill opacity=0.1] (0,0) -- (-110.48731511472266:0.57) arc (-110.48731511472266:-18.194872338766785:0.57) -- cycle;
\draw [color=uuuuuu] (5.,0.5) circle (2.5cm);
\draw [color=uuuuuu] (5.,0.5) circle (5.cm);
\draw [color=qqqqcc] (5.4375,1.6709371246996996) circle (3.75cm);
\draw [color=qqqqcc] (5.4375,-0.6709371246996997) circle (3.75cm);
\draw [color=qqqqcc] (8.5625,1.6709371246996996) circle (1.25cm);
\draw [color=qqqqcc] (8.5625,-0.6709371246996997) circle (1.25cm);
\draw [shift={(5.,0.5)},color=zzttqq,fill=zzttqq,fill opacity=0.1]  (0,0) --  plot[domain=0.31756042929152123:1.213225223149386,variable=\t]({1.*5.*cos(\t r)+0.*5.*sin(\t r)},{0.*5.*cos(\t r)+1.*5.*sin(\t r)}) -- cycle ;
\draw [shift={(5.,0.5)},color=zzttqq,fill=zzttqq,fill opacity=0.1]  (0,0) --  plot[domain=4.35481787673918:5.9656248778880645,variable=\t]({1.*2.5*cos(\t r)+0.*2.5*sin(\t r)},{0.*2.5*cos(\t r)+1.*2.5*sin(\t r)}) -- cycle ;
\draw [shift={(5.,0.5)},line width=1.5pt]  plot[domain=0.31756042929152123:1.213225223149386,variable=\t]({1.*5.*cos(\t r)+0.*5.*sin(\t r)},{0.*5.*cos(\t r)+1.*5.*sin(\t r)});
\draw [shift={(5.,0.5)},line width=1.5pt]  plot[domain=4.35481787673918:5.9656248778880645,variable=\t]({1.*2.5*cos(\t r)+0.*2.5*sin(\t r)},{0.*2.5*cos(\t r)+1.*2.5*sin(\t r)});
\draw [shift={(5.,0.5)},line width=1.5pt]  plot[domain=0.31756042929152134:1.9283674304404066,variable=\t]({1.*2.5*cos(\t r)+0.*2.5*sin(\t r)},{0.*2.5*cos(\t r)+1.*2.5*sin(\t r)});
\draw [shift={(5.,0.5)},line width=1.5pt]  plot[domain=5.0699600840302:5.9656248778880645,variable=\t]({1.*5.*cos(\t r)+0.*5.*sin(\t r)},{0.*5.*cos(\t r)+1.*5.*sin(\t r)});
\draw [fill=qqqqff] (5.,0.5) circle (2pt) node[left] {$o$};
\draw [fill=xdxdff] (9.,0.5) circle (2pt);
\draw[color=uuuuuu] (9.45,0.5) node {$q$};
\draw [fill=uuuuuu] (4.125,-1.8418742493994003) circle (2pt) node[below] {$a_1$};
\draw [fill=uuuuuu] (7.375,-0.2806247497997997) circle (2pt) node[right] {$b_1$};
\draw [fill=uuuuuu] (7.375,1.2806247497998) circle (2pt);
\draw [fill=uuuuuu] (7.23,0.97) node {$c_1$};
\draw [fill=uuuuuu] (4.125,2.8418742493993996) circle (2pt) node[below] {$d_1$};
\draw [fill=uuuuuu] (6.75,-4.183748498798798) circle (2pt) node[above] {$d_2$};
\draw [fill=uuuuuu] (9.75,-1.0612494995995996) circle (2pt) node[right] {$b_2$};
\draw [fill=uuuuuu] (9.75,2.0612494995995996) circle (2pt) node[right] {$c_2$};
\draw [fill=uuuuuu] (6.75,5.183748498798798) circle (2pt) node[above] {$a_2$};
\draw[color=uuuuuu] (5.33,0.8) node {$\varphi$};
\draw[color=uuuuuu] (5.15,0.2) node {$\theta$};
\draw (2.79,0.5) node {$\sigma_1$};
\draw (0.29,0.5) node {$\sigma_2$};
\end{tikzpicture}
\caption{Bitangent circles through $q$}\label{fig:bitangent_circles}
\end{figure}
Any circle through $q$ tangent to $\sigma_1$ and intersecting $\sigma_2$ in two points, must touch $\sigma_1$ on one of the open arcs $a_1b_1$ or $c_1d_1$.
Similarly, 
any circle through $q$ tangent to $\sigma_2$ and intersecting $\sigma_1$ in two points, must touch $\sigma_2$ on one of the open arcs $a_2c_2$ or $b_2d_2$.
It follows that $S'$ must be contained in the relative interiors of one of these four arcs.
Since $S$ consists of $m$ equally spaced points on each of $\sigma_1$ and $\sigma_2$,
\[|S'|< \left\lceil \frac{2m (\angle a_1ob_1 + \angle c_1od_1 + \angle b_2od_2 + \angle a_2oc_2)}{4\pi} \right\rceil = \left\lceil \frac{m(\theta+\phi)}{\pi}\right\rceil,\]
where $\theta$ and $\phi$ are as indicated in Figure~\ref{fig:bitangent_circles}.
In order to show that $|S'|\le m$, it suffices to show that the angle sum $\theta + \phi$ is strictly less than $\pi$.
This is clear from Figure~\ref{fig:bitangent_circles} (note that $a_1, o, a_2$ are collinear with $a_1$ and $a_2$ on opposite sides of $o$).
\end{proof}


\subsection{Four-point circles}

\begin{proof}[Proof of Theorem~\ref{thm:orchard}]
Let $P$ be a set of $n$ points in $\RR^2$ with at least $\frac{1}{24}n^3 - \frac{7}{24} n^2 + O(n)$ $4$-point generalised circles.
Let $t_i$ denote the number of $i$-point lines ($i\ge 2$) and $s_i$ the number of $i$-point circles ($i\ge 3$) in $P$.
By counting unordered triples of points, we have
\[ \binom{n}{3} = \sum_{i\ge 3}\binom{i}{3}(t_i+s_i) \ge t_3+s_3+4(t_4+s_4),\] hence \[\frac{1}{6}n^3 - O(n^2) \ge t_3 + s_3 + 4\left(\frac{1}{24}n^3 - O(n^2)\right)\] and $t_3+s_3 = O(n^2)$, so we can apply Theorem~\ref{thm:strong}.
We next consider each of the cases of that theorem in turn.

If all except $O(1)$ points of $P$ lie on a straight line, it is easy to see that $P$ determines only $O(n^2)$ generalised circles, contrary to assumption.

If all except $O(1)$ are vertices of two regular $m$-gons on concentric circles where $m= n/2 \pm O(1)$, then we know from Constructions~\ref{constr:even}, \ref{constr:offset}, and~\ref{constr:odd} that $P$ determines at most $\frac{1}{32}n^3 +O(n^2)$ $4$-point generalised circles, again contrary to assumption.

Suppose next that $P = ((H \oplus x)\setminus A) \cup B$, where $H$ is a finite subgroup of order $m=n\pm O(1)$ of a smooth circular cubic, $A$ is a subset of $H \oplus x$ with $a=O(1)$ points, and $B$ is a set disjoint from $H \oplus x$ with $b=O(1)$ points.
Then $n=m-a+b$.
The number of $4$-point generalised circles in $H \oplus x$ is $\frac{1}{24}m^3-\frac{1}{4}m^2 + O(m)$.
We next determine an upper bound for the number of $4$-point generalised circles in $P$.

For each $p\in A$, let $C_p$ be the set of $4$-point generalised circles of $H \oplus x$ that pass through $p$.
Then $|C_p|=\frac16 m^2 - O(m)$ and $|C_p\cap C_q|=O(m)$ for distinct $p,q\in A$.
By inclusion-exclusion, we destroy at least $|\bigcup_{p\in A}C_p|\ge \frac16 am^2 - O(m)$ $4$-point generalised circles by removing $A$, and we still have at most $\frac{1}{24}m^3-\frac{1}{4}m^2 - \frac16 am^2 + O(m)$ $4$-point generalised circles in $(H\oplus x)\setminus A$.

For each $p\in B$, the number of ordinary generalised circles spanned by $H \oplus x$ passing through $p$ is at most $O(m)$.
This is because each such generalised circle is tangent to the cubic at one of the points of $H \oplus x$, and there is only one generalised circle through $p$ and tangent at a given point of $H \oplus x$.
Also, for each pair of distinct $p,q\in B$, there are at most $O(m)$ generalised circles through $p$ and $q$ and two points of $H \oplus x$; and for any three $p,q,r\in B$ there are at most $O(1)$ generalised circles through $p,q,r$ and one point of $H \oplus x$.
Therefore, again by inclusion-exclusion, by adding $B$ we gain at most $O(m)$ $4$-point generalised circles.

It follows that the number of $4$-point generalised circles determined by $P$ is 
\[ t_4+s_4 \le \frac{1}{24}m^3-\frac{1}{4}m^2 - \frac16 am^2 + O(m) = \frac{n^3 - (a+3b+6)n^2 +O(n)}{24}.\]
Since we assumed that
\[t_4+s_4\ge \frac{n^3-7n^2+O(n)}{24},\] we obtain $a+3b < 1$.
Therefore, $a=b=0$ and $P= H \oplus x$.
The maximum number of $4$-point circles in a coset has been determined in Constructions~\ref{constr:ellipse} and \ref{constr:cubic}.

The final case, when all but $O(1)$ points of $P$ lie on an ellipse,
can be reduced to the previous case.
Indeed, by Lemma~\ref{prop:ellipseandacnodal}, if we invert the ellipse in a point on the ellipse, we obtain an acnodal circular cubic, and then the above analysis holds verbatim for the group of regular points on this cubic.
\renewcommand{\qedsymbol}{$\blacksquare$}
\end{proof}

\begin{bibdiv}
\begin{biblist}

\bib{BB94}{article}{
      author={B{\'a}lintov{\'a}, A.},
      author={B{\'a}lint, V.},
       title={On the number of circles determined by $n$ points in the
  {Euclidean} plane},
        date={1994},
     journal={Acta Math.\ Hungar.},
      volume={63},
      number={3},
       pages={283\ndash 289},
}

\bib{B16}{article}{
   author={Ball, Simeon},
   title={On sets defining few ordinary planes},
   journal={Discrete Comput.\ Geom.},
   volume={60},
   date={2018},
   number={1},
   pages={220--253},
}

\bib{BM16}{article}{
   author={Ball, Simeon},
   author={Monserrat, Joaquim},
   title={A generalisation of {S}ylvester's problem to higher dimensions},
   journal={J. Geom.},
   volume={108},
   date={2017},
   number={2},
   pages={529--543},
}

\bib{B01}{book}{
      author={Basset, A.~B.},
       title={An elementary treatise on cubic and quartic curves},
   publisher={Cambridge},
        date={1901},
}

\bib{BPBS84}{book}{
      author={Berger, M.},
      author={Pansu, R.},
      author={Berry, J.-P.},
      author={Saint-Raymond, X.},
       title={Problems in geometry},
   publisher={Springer},
        date={1984},
}

\bib{B00}{book}{
      author={Blair, D.},
       title={Inversion theory and conformal mappings},
   publisher={American Mathematical Society},
        date={2000},
}

\bib{BVZ16}{article}{
      author={Boys, Thomas},
      author={Valculescu, Claudiu},
      author={de~Zeeuw, Frank},
       title={On the number of ordinary conics},
        date={2016},
     journal={SIAM J. Discrete Math.},
      volume={30},
      number={3},
       pages={1644\ndash 1659},
}

\bib{BMP05}{book}{
      author={Brass, P.},
      author={Moser, W.},
      author={Pach, J.},
       title={Research problems in discrete geometry},
   publisher={Springer},
        date={2005},
}

\bib{Co69}{book}{
      author={Coxeter, H. S.~M.},
       title={Introduction to geometry},
     edition={Second Edition},
   publisher={Wiley},
        date={1969},
}

\bib{CDFGLMSST15}{unpublished}{
      author={Czapli{\'n}ski, A.},
      author={Dumnicki, M.},
      author={Farnik, {\L{}}.},
      author={Gwo{\'z}dziewicz, J.},
      author={Lampa-Baczy{\'n}ska, M.},
      author={Malara, G.},
      author={Szemberg, T.},
      author={Szpond, J.},
      author={Tutaj-Gasi{\'n}ska, H.},
       title={On the {S}ylvester-{G}allai theorem for conics},
        note={Rend. Sem. Mat. Univ. Padova, accepted. arXiv:1411.2648},
}

\bib{EGH96}{article}{
      author={Eisenbud, David},
      author={Green, Mark},
      author={Harris, Joe},
       title={Cayley-{B}acharach theorems and conjectures},
        date={1996},
     journal={Bull. Amer. Math. Soc. (N.S.)},
      volume={33},
      number={3},
       pages={295\ndash 324},
}

\bib{E67}{article}{
      author={Elliott, P. D. T.~A.},
       title={On the number of circles determined by $n$ points},
        date={1967},
     journal={Acta Math.\ Acad.\ Sci.\ Hungar.},
      volume={18},
       pages={181\ndash 188},
}

\bib{GT13}{article}{
      author={Green, Ben},
      author={Tao, Terence},
       title={On sets defining few ordinary lines},
        date={2013},
     journal={Discrete Comput.\ Geom.},
      volume={50},
      number={2},
       pages={409\ndash 468},
}

\bib{H20}{book}{
      author={Hilton, Harold},
       title={Plane algebraic curves},
   publisher={Oxford University Press},
        date={1920},
}

\bib{J48}{article}{
      author={Joachimsthal, F.},
       title={D{\'e}monstration d'un th{\'e}or{\`e}me de {M}r. {S}teiner},
        date={1848},
     journal={J. Reine Angew. Math.},
      volume={36},
       pages={95\ndash 96},
}

\bib{J77}{article}{
      author={Johnson, W.~W.},
       title={Classification of plane curves with reference to inversion},
        date={1877},
     journal={The Analyst},
      volume={4},
      number={2},
       pages={42\ndash 47},
}

\bib{KW91}{book}{
      author={Klee, V.},
      author={Wagon, S.},
       title={Old and new unsolved problems in plane geometry and number
  theory},
   publisher={Mathematical Association of America},
        date={1991},
}

\bib{M51}{article}{
      author={Motzkin, Theodore},
       title={The lines and planes connecting the points of a finite set},
        date={1951},
     journal={Trans. Amer. Math. Soc.},
      volume={70},
      number={3},
       pages={451\ndash 464},
}

\bib{MZ}{unpublished}{
      author={{Nassajian Mojarrad}, Hossein},
      author={de~Zeeuw, Frank},
       title={On the number of ordinary circles},
        note={arXiv:1412.8314},
}

\bib{RSZ15}{article}{
      author={Raz, Orit~E.},
      author={Sharir, Micha},
      author={de~Zeeuw, Frank},
       title={Polynomials vanishing on {C}artesian products: The
  {E}lekes-{S}zab{\'o} {T}heorem revisited},
        year={2016},
     journal={Duke Math. J.},
      volume={165},
      number={18},
       pages={3517\ndash 3566},
}

\bib{S09}{book}{
      author={Silverman, J.~H.},
       title={The arithmetic of elliptic curves},
     edition={Second Edition},
   publisher={Springer},
        date={2009},
}

\bib{ST92}{book}{
      author={Silverman, Joseph~H.},
      author={Tate, John~T.},
       title={Rational points on elliptic curves},
     edition={Second Edition},
   publisher={Springer},
        date={1992},
}

\bib{W78}{book}{
      author={Walker, R.~J.},
       title={Algebraic curves},
   publisher={Springer},
        date={1978},
}

\bib{W11}{thesis}{
      author={Werner, Thomas~Rainer},
       title={Rational families of circles and bicircular quartics},
        type={Ph.D. Thesis},
     address={Friedrich-Alexander-Universit\"{a}t Erlangen-N\"{u}rnberg},
        date={2011},
}

\bib{WW88}{article}{
      author={Wiseman, J.},
      author={Wilson, P.~R.},
       title={A {S}ylvester theorem for conic sections},
        date={1988},
     journal={Discrete Comput.\ Geom.},
      volume={3},
      number={4},
       pages={295\ndash 305},
}

\bib{Z11}{article}{
      author={Zhang, R.},
       title={On the number of ordinary circles determined by $n$ points},
        date={2011},
     journal={Discrete Comput.\ Geom.},
      volume={46},
      number={2},
       pages={205\ndash 211},
}

\end{biblist}
\end{bibdiv}


\end{document}